\documentclass[11pt]{amsart}
\usepackage{amsmath,amsfonts,amsbsy,amsgen,amscd,mathrsfs,amssymb,amsthm}
\usepackage{enumerate,mathtools}
\usepackage{microtype}
\usepackage{tabularx}
\usepackage{array}
\newcolumntype{P}[1]{>{\centering\arraybackslash}p{#1}}
\usepackage{makecell}

\usepackage{stmaryrd}
\usepackage{fullpage}
\usepackage{url}
\usepackage{mathrsfs}
\usepackage{float}
\usepackage{bbm}
\usepackage{quiver}
\usepackage[colorlinks=true,allcolors=blue]{hyperref}

\usepackage[
backend=biber,
style=alphabetic,
maxalphanames=5,
maxnames=5
]{biblatex}
\makeatletter
\renewcommand*{\eqref}[1]{%
  \hyperref[{#1}]{\textup{\tagform@{\ref*{#1}}}}%
}
\makeatother

\usepackage{tikz}
\usetikzlibrary{shadings}

\numberwithin{equation}{section}

\newtheorem{theorem}{Theorem}[section]
\newtheorem{proposition}[theorem]{Proposition}

\newtheorem{conjecture}[theorem]{Conjecture}
\newtheorem{corollary}[theorem]{Corollary}
\newtheorem{lemma}[theorem]{Lemma}
\theoremstyle{definition}
\newtheorem{definition}[theorem]{Definition}

\newtheorem{example}[theorem]{Example}

\newtheorem*{claim*}{Claim}

\newcommand{\A}{\mathbb{A}}
\newcommand{\C}{\mathbb{C}}

\newcommand{\F}{\mathbb{F}}
\newcommand{\G}{\mathbb{G}}

\newcommand{\N}{\mathbb{N}}

\newcommand{\Q}{\mathbb{Q}}

\newcommand{\Z}{\mathbb{Z}}

\DeclareMathOperator{\Conf}{Conf}

\DeclareMathOperator{\disc}{disc}

\DeclareMathOperator{\Frob}{Frob}

\DeclareMathOperator{\Gr}{Gr}

\DeclareMathOperator{\Hur}{Hur}

\DeclareMathOperator{\id}{id}

\DeclareMathOperator{\Ind}{Ind}

\DeclareMathOperator{\inv}{inv}

\DeclareMathOperator{\ord}{ord}

\DeclareMathOperator{\PConf}{PConf}

\DeclareMathOperator{\res}{res}

\DeclareMathOperator{\Spec}{Spec}

\DeclareMathOperator{\Tor}{Tor}
\DeclareMathOperator{\tr}{tr}

\newcommand{\SL}{\operatorname{SL}}

\usepackage[shortlabels]{enumitem}

\usepackage[OT2,T1]{fontenc}
\DeclareSymbolFont{cyrletters}{OT2}{wncyr}{m}{n}
\DeclareMathSymbol{\Sha}{\mathalpha}{cyrletters}{"58}
\newcommand{\Mod}[1]{\ (\mathrm{mod}\ #1)}

\usepackage{array, booktabs}

\title{Optimal homological vanishing: cancellation of character sums and Patterson's conjecture over $\F_q[t]$}

\author{Zhao Yu Ma}
\address{Princeton University, Princeton, NJ, USA}
\email{zm5336@princeton.edu}
\begin{document}
\begin{abstract}
Many arithmetic sums over function fields can be expressed in terms of $H_i(B_n, V^{\otimes n})$ for some braided vector space $V$, and a vanishing line for these homology groups gives power-savings cancellation for the arithmetic sum. We prove an explicit vanishing line for $H_i(B_n,V^{\otimes n})$ depending only on the homology up to some finite $n$. Moreover, as the range of $n$ increases, the slope of the resulting vanishing line converges to the optimal slope. We also apply our methods to two different families of arithmetic sums. Firstly, we prove an upper bound for the bias of higher order Gauss sums over function fields, extending Patterson's conjecture beyond the cubic and quartic cases over number fields, and we conjecture this bound is sharp for orders that are prime powers. Secondly, we show that over Galois $G$-extensions, almost all character sums exhibit near square-root cancellation.
\end{abstract}

\maketitle

\hypersetup{linktocpage=true}
\tableofcontents

%%Specific Tikz setup
\newcommand{\drawnaxis}[1]{
% max n
  \draw[->] (0,0) -- (#1+0.5,0) node[right] {$n$};
  \foreach \x in {1,...,#1}
    \node[below] at (\x-0.5,0) {\x};
}

\newcommand{\drawiaxis}[1]{
% max i
  \draw[->] (0,0) -- (0,#1+1.5) node[above] {$i$};
  \foreach \y in {1,...,\numexpr#1+1\relax}
    \pgfmathtruncatemacro{\yy}{\y-1}
    \node[left] at (0,\y-0.5) {\yy};
}

\newcommand{\drawblock}[6]{%
  % Xstart Xend Ystart Y end filled with Color, Entry
  \foreach \n in {\numexpr#1-1\relax,...,\numexpr#2-1\relax}{
    \foreach \i in {#3,...,#4}{
      \fill[#5] (\n,\i) rectangle (\n+1,\i+1);
      \node at (\n+0.5,\i+0.5) {$#6$};
    }
  }
}

\newcommand{\drawstaircase}[5]{
  % Xstart Xend Ystart Color Entry
  \foreach \n in {\numexpr #1-1\relax,...,\numexpr#2-1\relax}{
    \foreach \i in {#3,...,\numexpr #3+\n+1-#1\relax}{
      \fill[#4] (\n,\i) rectangle (\n+1,\i+1);
      \node at (\n+0.5,\i +0.5) {$#5$};
    }
  }
}

\newcommand{\cell}[3]{
    % Xcell, Ycell, Entry
    \fill[black!10] (#1,#2) rectangle (#1-1,#2+1);
    \node at (#1-0.5,#2+0.5) {$#3$};
  }
  
\newcommand{\smallcell}[3]{
    % Xcell, Ycell, Entry
    \fill[black!10] (#1,#2) rectangle (#1-1,#2+1);
    \node at (#1-0.5,#2+0.5) {\fontsize{9}{11}\selectfont $#3$};
  }

\section{Introduction}
In \cite{EVW16}, Ellenberg-Venkatesh-Westerland pioneered the geometric approach to arithmetic statistics over function fields via the cohomology of Hurwitz spaces. By using the Grothendieck-Lefschetz trace formula and étale to complex comparison theorems, they reduce the Cohen-Lenstra heuristics to proving homological stability of the corresponding Hurwitz space. Since then, there has been much work on this geometric approach for Hurwitz spaces, culminating in \cite{landesman2025homologicalstabilityhurwitzspaces,landesman2025cohenlenstramomentsfunctionfields,landesman2025stablehomologyhurwitzmodules} which prove many arithmetic statistics conjectures in different settings. 

Instead of looking just at Hurwitz spaces, one can study the more general case of local systems on configuration space, which leads to a larger variety of arithmetic applications like the asymptotics of certain arithmetic sums, as was carried out in \cite{ETW17,bergstrom2023hyperellipticcurvesscanningmap,ES26}. Here, we will only consider the configuration space of the plane $\Conf^n(\C)$, which corresponds to arithmetic sums over the rational function field $\F_q(t)$. Since $\Conf^n(\C)$ is the Eilenberg–MacLane space $K(B_n,1)$ where $B_n$ is the Artin braid group, local systems on $\Conf^n(\C)$ are just representations of $B_n$. Moreover, in most arithmetic situations of interest, the relevant $B_n$-representations are of the form $V^{\otimes n}$ for some braided vector space $V$. It is therefore often enough to understand the homology groups $H_i(B_n, V^{\otimes n})$, and thus we restrict our attention to the homology of braided vector spaces.

Most of the papers mentioned above study the phenomenon of \textit{homological stability}. We say that $H_i(B_n,V^{\otimes n})$ has homological stability with \textit{slope} $s$ if there exists some constant $t$ such that for each homological degree $i$, the homology $H_i(B_n,V^{\otimes n})$ is independent of $n$ once $sn+t\ge i$. In terms of arithmetic applications, homological stability theorems can tell us the primary term of relevant arithmetic sums on the order of $q^n$ together with a power-saving error term on the order of $q^{(1-s/2)n}$. As a special case of this, there is the phenomenon of \textit{homological vanishing}, where $H_i(B_n,V^{\otimes n})$ vanishes with slope $s$ if there is some constant $t$ where $H_i(B_n,V^{\otimes n})$ is zero for $sn+t\ge i$. In this case, one can upper bound the relevant arithmetic sum with a power-saving term of the order of $q^{(1-s/2)n}$. 

In this paper, we study the \textit{optimal slope} of homological vanishing for braided vector spaces. Our results were inspired by \cite[Theorem 1.1.1]{ES26} which proves that if $H_0(B_n,V^{\otimes n})$ vanishes for all $n\ge N$, then $H_i(B_n,V^{\otimes n})$ vanishes with slope $\frac{1}{N+1}$. We will show that this is actually a special case of a more general homological vanishing theorem, Theorem \ref{thm: main extrapolating vanishing}. As a direct consequence of our main theorem, we use this to show that one can obtain vanishing lines with slope arbitrarily close to the true slope in Corollary \ref{cor: optimal slope cor}. We also apply our methods to two different families of arithmetic sums, which we will discuss in Section \ref{sec: intro patterson} and \ref{sec: intro character sums G-extensions}.

We remark that there have been other instances of arithmetic applications of homological vanishing, such as in \cite{Sawin_2021_sqroot,Sawin_2024_sqroot}, where they deduce square-root cancellation of certain sums from the homological vanishing of certain varieties, however, these are not in the context of local systems on configuration space or Hurwitz spaces.

\subsubsection{Main theorem} Our main result is a homological vanishing ``extrapolation'' theorem that, from knowing which homological degrees for which $H_i(B_n,V^{\otimes n})$ vanish for small $n$, allows us to extrapolate this to a certain slope of vanishing for all $n$. We explain this as follows. Let $V$ be a braided vector space over $\C$.
\begin{definition}\label{defn: vanishing staircase}
Let $N\ge 2$ and $I\ge 0$ be integers. We say that $H_i(B_n,V^{\otimes n})$ \textit{vanishes in the $(N,I)$ staircase} if it is zero for all $(n,i)$ satisfying $n\le N$ and $i\le I+n-N$.  
\end{definition}
\begin{theorem}\label{thm: main extrapolating vanishing}
Suppose $H_i(B_n,V^{\otimes n})$ vanishes in an $(N,I)$ staircase. Then, it vanishes with slope $\frac{I+1}{N+1}$. More precisely, $H_i(B_n,V^{\otimes n})=0$ for all $(n,i)$ satisfying
\begin{equation}\label{eqn: i in terms of n}
i\le (I+1)\left\lfloor \frac{n+1}{N+1}\right\rfloor+ \max\left(I+\left\{ \frac{n+1}{N+1}\right\}-N\ ,\ 0\right)-1. 
\end{equation}
Here, we denote $\{\frac a b\}$ to be the remainder when $a$ is divided by $b$.
\end{theorem}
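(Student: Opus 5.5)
We will work with the Nichols-algebra/quantum-shuffle model for braid group homology. Put $A=\bigoplus_n V^{\otimes n}$ with the braided tensor structure, and let $R=\bigoplus_n H_*(B_n,V^{\otimes n})$ be the resulting bigraded object. This is an $E_2$-algebra (equivalently, the homology of the free $E_1$-algebra on $V$ in braided vector spaces), graded by $n$. As in the approach of \cite{ES26}, we will use the cell structure of the Fox--Neuwirth/Fuks stratification. It gives a chain complex computing $H_*(B_n,V^{\otimes n})$ from bar-type complexes in which a cell is indexed by a composition $n=n_1+\dots+n_k$. Concretely, $H_i(B_n,V^{\otimes n})$ is $\mathrm{Ext}$ (or $\mathrm{Tor}$) over the quantum shuffle algebra $\mathfrak{A}(V)$, with internal degree $n$ and homological degree $i$ shifted by $n$. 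The main structural input is a ``splitting'' spectral sequence. We filter by the last block of a composition, or equivalently use the $E_1$-multiplication $R_{n_1}\otimes R_{n_2}\to R_{n_1+n_2}$ together with the bar spectral sequence for an $E_2$-algebra. This yields an $E^1$ page whose terms are built from tensor products $H_{i_1}(B_{n_1})\otimes\cdots\otimes H_{i_k}(B_{n_k})$ with total degree $i=\sum i_j+(k-1)$, and which converges to something known to vanish, or to be concentrated in low degree, in a range.

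The proof then proceeds by strong induction on $n$. Write $n+1=a(N+1)+r$ with $0\le r\le N$, and let $f(n)$ denote the right-hand side of \eqref{eqn: i in terms of n}. We first check the elementary combinatorial facts we need about $f$:
\begin{enumerate}[(i)]
\item For $n\le N$ we have $a=0$ when $n<N$ and $a=1,r=0$ when $n=N$. In both cases $f(n)\le I+n-N$, so the claim holds on the staircase itself.
\item $f$ is superadditive in the shifted sense $f(n_1)+f(n_2)+1\le f(n_1+n_2+1)$. The ``$+1$'' reflects one gluing/suspension step.
\item $f(n+N+1)=f(n)+I+1$.
\end{enumerate}
Property (iii) is exactly the periodicity that produces the slope $\frac{I+1}{N+1}$. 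The floor and max terms are designed so that (ii) holds with equality in the extremal splittings.

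The inductive step is the key homological argument. Suppose $H_i(B_m,V^{\otimes m})=0$ for $i\le f(m)$ and all $m<n$. Consider the spectral sequence or long exact sequence relating $H_*(B_n)$ to the products $H_*(B_{n_1})\otimes H_*(B_{n_2})$ with $n_1+n_2+1=n$. One of the $n_j$ should be $N$, so that the staircase input at level $N$ (vanishing up to degree $I$) is used. Concretely, we would use the stabilization map by a ``cell'' of width $N+1$. The relative term, the homology of the cofiber of $R_{n-N-1}\to R_n$, is controlled by pieces involving $R_{\le N}$, where the staircase gives vanishing. The absolute term $R_{n-N-1}$ is controlled by induction together with the shift (iii). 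Degree counting with (ii) then shows that every $E^1$ term in total degree $\le f(n)$ contains a tensor factor in its vanishing range. The same holds for the terms that could contribute via differentials, so the target vanishes.

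The main obstacle is setting up the correct exact sequence or spectral sequence: one whose associated graded involves only tensor products of lower $H_*(B_m,V^{\otimes m})$, with the right homological shifts, and which converges to zero or to something controllable. The natural candidate is the one used in \cite{ES26}, where vanishing of $H_0$ for $n\ge N$ already propagates. Our plan is to generalize that argument by replacing ``$H_0$ vanishes'' with ``$H_i$ vanishes for $i\le I+n-N$.'' The staircase shape $i\le I+n-N$ is precisely what makes the vanishing stable under the $+1$ shifts from adding strands with degree $1$ differentials. The second delicate point is verifying (ii) for all residues $r$, which requires a case analysis on whether $r_1+r_2+1$ wraps past $N+1$. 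This is exactly where the $\max(\cdot,0)$ term arises.
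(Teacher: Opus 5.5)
There is a genuine gap: the filtration or spectral sequence that everything rests on is never constructed. You flag this yourself as ``the main obstacle'', and that construction is the entire content of the proof.

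The paper does not induct on $n$ using homology-level splittings; it works at the chain level. By \eqref{eqn: homology to bar-complex}, with $\mathfrak A=\mathfrak A(V^*_{-1})$, the staircase hypothesis says $H^d(B_m(\mathfrak A))=0$ for $m\le N$ and $d\le I-N$. The goal is $H^d(B_n(\mathfrak A))=0$ for $d\le f(n)-n$. Terms $\mathfrak A_\lambda$ of $B_n(\mathfrak A)$ are indexed by separator sets $S_\lambda\subseteq\{0,\dots,n\}$. One cuts $\{0,\dots,n\}$ into $a=\lfloor n/(N+1)\rfloor$ blocks of width $N+1$ and a last block of width $b+1$. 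One then filters by $L(\lambda)=\sum_i (M_i-m_i)$, where $m_i,M_i$ are the extreme separators in block $i$ (the summand is $-1$ for an empty block). Differentials only delete separators, so $L$ cannot increase. Each graded piece is a direct sum of complexes $B_{i_1}(\mathfrak A)\otimes\mathfrak A_{j_1}\otimes\cdots\otimes\mathfrak A_{j_k}\otimes B_{i_{k+1}}(\mathfrak A)$ with all $i_l\le N$, $k\le a$, and $i_{a+1}\le b$ if $k=a$ (Proposition \ref{prop: multi-block filtration}). The crucial features are:
\begin{itemize}
\item the gap factors $\mathfrak A_j$ are single vector spaces in degree $-1$, however large $j$ is;
\item they are never replaced by braid group homology;
\item there is at most one of them per block of width $N+1$.
\end{itemize}
K\"unneth and a degree count then give the bound $f(n)$ directly.

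Your proposed substitutes cannot play this role.
\begin{itemize}
\item \textbf{The bar spectral sequence stalls.} You describe an $E^1$ page built from $H_*(B_{n_1})\otimes\cdots\otimes H_*(B_{n_k})$ over all compositions of $n$. It contains the term $H_0(B_1,V)\otimes H_i(B_{n-1},V^{\otimes n-1})=V\otimes H_i(B_{n-1},V^{\otimes n-1})$, which sits exactly where $d^1$ (multiplication) can hit $H_i(B_n,V^{\otimes n})$. Since $V\ne 0$, an argument using only factorwise vanishing can never push the vanishing range at $n$ past the one at $n-1$. So no positive slope comes out.
\item \textbf{The stabilization step is undefined.} The ``stabilization by a cell of width $N+1$'', and the claim that the cofiber of $R_{n-N-1}\to R_n$ is controlled by $R_{\le N}$, are neither defined nor justified for a general braided vector space.
\item \textbf{Property (ii) points the wrong way.} It is true, but to deduce vanishing up to $f(n)$ from a piece whose vanishing range is assembled from its factors, you need an \emph{upper} bound on $f(n)$ in terms of the factors, not a lower one.
\end{itemize}

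Your inductive instinct can be rescued with a two-block version of the paper's filtration, using blocks $\{0,\dots,N\}$ and $\{N+1,\dots,n\}$. Its graded pieces are $B_{i_1}(\mathfrak A)\otimes\mathfrak A_j\otimes B_m(\mathfrak A)$ with $i_1\le N$, $j\ge 1$ and $m\le n-N-1$. Strong induction on $n$ then goes through. The inequality it needs is $f(n)\le f(m)+(n-m)-(N-I)$ whenever $n-m\ge N+1$. This follows from your (iii) together with $f(x+1)\le f(x)+1$, not from (ii).
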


To illustrate this, we consider the case of $(N,I)=(4,1)$ in Figure \ref{fig: (4,1)}. The vanishing of homology in the $(4,1)$ staircase is indicated by the red staircase in the figure. Theorem \ref{thm: main extrapolating vanishing} then says that if the homology in the red staircase vanishes, then the homology in the blue region vanishes as well. Note the blue region is simply the red region translated up by the vector $(N+1,I+1)=(5,2)$ each time, so it vanishes with slope $\frac 2 5$. This tells us that vanishing in an $(N,I)$ staircase implies vanishing in a $(k(N+1)-1,k(I+1)-1)$ staircase for all $k\ge 1$.

\begin{figure}[h]
    \centering
\begin{tikzpicture}[x=1cm,y=1cm,scale=0.5]
\drawnaxis{15}
\drawiaxis{6}
\drawstaircase{1}{7}{0}{black!10}{*}
\drawblock{8}{15}{0}{6}{black!10}{*}
\drawstaircase{3}{4}{0}{red!40}{0}
\drawstaircase{8}{9}{2}{blue!40}{0}
\drawstaircase{13}{14}{4}{blue!40}{0}
\drawblock{5}{15}{0}{1}{blue!40}{0}
\drawblock{10}{15}{2}{3}{blue!40}{0}
\drawblock{15}{15}{4}{5}{blue!40}{0}
\end{tikzpicture}
\caption{Theorem \ref{thm: main extrapolating vanishing} for $(N,I)=(4,1)$.}\label{fig: (4,1)}
\end{figure}
When we take the special case of $I=0$, we get a slight generalization of \cite[Theorem 1.1.1]{ES26}, as we now only require vanishing of $H_0(B_N,V^{\otimes N})$ instead of vanishing of $H_0(B_n,V^{\otimes n})$ for all $n\ge N$.
\begin{corollary}\label{cor: es main theorem}
Let $N\ge 2$ be an integer such that $H_0(B_N,V^{\otimes N})=0$. Then, we have $H_i(B_n,V^{\otimes n})=0$ for $i\le \frac{n+1}{N+1}-1$, so it vanishes with slope $\frac{1}{N+1}$.
\end{corollary}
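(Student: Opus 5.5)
This is the case $I=0$ of Theorem \ref{thm: main extrapolating vanishing}, so the plan is to check that the hypothesis of the corollary is exactly the $(N,0)$ staircase condition and then simplify the bound \eqref{eqn: i in terms of n}.

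\textbf{Step 1: the hypothesis.} By Definition \ref{defn: vanishing staircase}, vanishing in the $(N,0)$ staircase means $H_i(B_n,V^{\otimes n})=0$ for all $n\le N$ and $i\le n-N$.
Since $i\ge 0$, the inequality $i\le n-N$ together with $n\le N$ forces $n=N$ and $i=0$. So the staircase consists of the single group $H_0(B_N,V^{\otimes N})$, which is zero by assumption. Theorem \ref{thm: main extrapolating vanishing} therefore applies with $(N,I)=(N,0)$.

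\textbf{Step 2: simplify the bound.} With $I=0$, \eqref{eqn: i in terms of n} reads
\[
i\le \left\lfloor \frac{n+1}{N+1}\right\rfloor+\max\left(\left\{\frac{n+1}{N+1}\right\}-N,\ 0\right)-1 .
\]
The remainder $\{\frac{n+1}{N+1}\}$ lies in $\{0,\dots,N\}$, so the first argument of the max is at most $0$. Hence the max equals $0$, and the condition becomes $i\le \lfloor \frac{n+1}{N+1}\rfloor-1$.

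\textbf{Step 3: match the stated inequality.} Since $i$ is an integer, $i\le \frac{n+1}{N+1}-1$ holds if and only if $i+1\le \lfloor \frac{n+1}{N+1}\rfloor$. This is exactly the condition from Step 2, so $H_i(B_n,V^{\otimes n})=0$ whenever $i\le \frac{n+1}{N+1}-1$.

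\textbf{Step 4: the slope.} The region $i\le \frac{n}{N+1}+\frac{1}{N+1}-1$ is of the form $sn+t\ge i$ with $s=\frac{1}{N+1}$, so the homology vanishes with slope $\frac{1}{N+1}$.

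There is no real obstacle. The only points needing care are that $i\ge0$ collapses the staircase to the single group $H_0(B_N,V^{\otimes N})$, and that the remainder never exceeds $N$.
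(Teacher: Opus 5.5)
Your proposal is correct and follows the paper's approach: the corollary is the $I=0$ case of Theorem \ref{thm: main extrapolating vanishing}. The $(N,0)$ staircase collapses to the single group $H_0(B_N,V^{\otimes N})$, and the max term in \eqref{eqn: i in terms of n} vanishes because the remainder is at most $N$, which leaves $i\le \lfloor\frac{n+1}{N+1}\rfloor-1$; for integer $i$ this is equivalent to the stated bound $i\le\frac{n+1}{N+1}-1$.
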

We illustrate the example of $N=3$ in Figure \ref{fig: (3,0)}. Given that the homology group which is colored red vanishes, the homology in the blue region will also vanish.

\begin{figure}[h]
    \centering
\begin{tikzpicture}[x=1cm,y=1cm,scale=0.5]
\drawnaxis{15}
\drawiaxis{4}
\drawstaircase{1}{5}{0}{black!10}{*}
\drawblock{6}{15}{0}{4}{black!10}{*}
\drawstaircase{3}{3}{0}{red!40}{0}
\drawstaircase{7}{7}{1}{blue!40}{0}
\drawstaircase{11}{11}{2}{blue!40}{0}
\drawstaircase{15}{15}{3}{blue!40}{0}
\drawblock{4}{15}{0}{0}{blue!40}{0}
\drawblock{8}{15}{1}{1}{blue!40}{0}
\drawblock{12}{15}{2}{2}{blue!40}{0}
\end{tikzpicture}
\caption{Theorem \ref{thm: main extrapolating vanishing} for $(N,I)=(3,0)$/Corollary \ref{cor: es main theorem} for $N=3$.}\label{fig: (3,0)}
\end{figure}
We will prove our main theorem in Section \ref{sec: proof of main theorem}. The key idea is to construct a filtration on the $n$-th bar-complex, where vanishing of each graded pieces can be deduced from vanishing of the $m$-th bar-complex for smaller $m<n$. This filtration will be constructed combinatorially as the terms in the bar-complex can be indexed by partitions.
\subsubsection{Optimal vanishing slope for braided vector spaces}
An important consequence of Theorem \ref{thm: main extrapolating vanishing} is that one can prove a vanishing slope for $H_i(B_n,V^{\otimes n})$ that is arbitrarily close to the optimal slope simply by computing the homology up to some sufficiently large $n$. More precisely, we have the following.
\begin{corollary}\label{cor: optimal slope cor}
Let $V$ be a braided vector space such that $H_i(B_n,V^{\otimes n})$ has homological vanishing of slope $s$. Then, for any $\epsilon>0$, there exist some $N>0$ and $I\ge 0$ such that $H_i(B_n,V^{\otimes n})$ vanishes in an $(N,I)$ staircase and $\frac{I+1}{N+1}>s-\epsilon$. In particular, computing $H_i(B_n,V^{\otimes n})$ and applying Theorem \ref{thm: main extrapolating vanishing} will prove homological vanishing with slope $>s-\epsilon$.
\end{corollary}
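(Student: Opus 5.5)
Because homological vanishing of slope $s$ is assumed, we can choose a constant $t$ with $H_i(B_n,V^{\otimes n})=0$ whenever $i\le sn+t$. The aim is to produce an $(N,I)$ staircase lying inside this vanishing region, with $\frac{I+1}{N+1}$ as close to $s$ as we like. After that, Theorem \ref{thm: main extrapolating vanishing} finishes the argument. For instance, computing the homology in the finite range $n\le N$ confirms vanishing in the $(N,I)$ staircase, and the theorem then yields vanishing with slope $\frac{I+1}{N+1}>s-\epsilon$.

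For the construction, fix $\epsilon>0$. We may take $s\le 1$: the staircase $\{i\le I+n-N\}$ has slope $1$ along its diagonal edge, and in any case we only need $s-\epsilon$. Replacing $s$ by $\min(s,1)$ is harmless, since for $s>1$ vanishing with slope $s$ implies vanishing with slope $1$ once $t$ is adjusted, provided $n\ge 0$ and $t$ is replaced by something like $\min(t,0)$. Set $I=\lfloor sN+t\rfloor$, so that $I\ge 0$ for $N$ large when $s>0$ (if $s\le\epsilon$ there is nothing to prove; take $I=0$ after checking $H_0$ vanishing, or note that the bound $\frac{I+1}{N+1}>0\ge s-\epsilon$ already holds once any staircase exists). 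We then need to check that the staircase lies in the vanishing region. A point $(n,i)$ of the staircase has $n\le N$ and $i\le I+n-N$, so
\[
i\le sN+t+n-N = sn+t+(1-s)(n-N)\cdot(-1)\cdot(-1)\le sn+t,
\]
where the last inequality uses $(1-s)(N-n)\ge 0$. More carefully, $I+n-N\le sN+t-(N-n)\le sN+t-s(N-n)=sn+t$, because $s\le 1$. Hence the homology vanishes on the whole staircase.

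Finally, we compare slopes: $\frac{I+1}{N+1}\ge\frac{sN+t}{N+1}\to s$ as $N\to\infty$, so this exceeds $s-\epsilon$ for $N$ large.

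The main subtlety is the edge case $s\le\epsilon$, or $I$ coming out negative, where a staircase with $I\ge 0$ must still exist. Here vanishing with slope $s>0$ forces $H_0(B_N,V^{\otimes N})=0$ for large $N$, which gives the $(N,0)$ staircase and reduces the claim to Corollary \ref{cor: es main theorem}. If $s\le 0$, the statement is vacuous only when some staircase exists. I would read the corollary as assuming $s>0$ and state that assumption explicitly. The other point to handle carefully is the reduction to $s\le 1$.
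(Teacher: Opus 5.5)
Your argument is correct and is exactly what the paper intends when it says the corollary ``follows immediately from the definition of slope and Theorem \ref{thm: main extrapolating vanishing}''. You choose $I=\lfloor sN+t\rfloor$, use $s\le 1$ to get $I+n-N\le sn+t$ on the staircase, and let $N\to\infty$ so that $\frac{I+1}{N+1}\to s$.

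Your caveats are right but slightly understated:
\begin{itemize}
\item The statement genuinely needs $s>0$. Every $V$ vanishes with slope $0$ vacuously, yet trivial braiding has $H_0(B_N,V^{\otimes N})\neq 0$ for all $N$, so no staircase vanishes.
\item For $s\ge 1+\epsilon$, replacing $s$ by $\min(s,1)$ is not actually harmless. Rather, the conclusion there is unattainable: $H_0(B_1,V)=V\neq 0$ forces $I\le N-2$ in any vanishing staircase, hence $\frac{I+1}{N+1}<1$.
\end{itemize}
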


This follows immediately from the definition of slope for homological vanishing and Theorem \ref{thm: main extrapolating vanishing}. Furthermore, we observe empirically that it is often the case that computing the cohomology up to some $N$ would give us the exact optimal slope -- for example, this seems to be the case for the bias in Gauss sums for orders $p^k$ and $2p^k$ (see Figure \ref{fig: C_wedge zeta 3,6,4}), and some character sums (see Figure \ref{fig: S_3 trans 6th root}). Unfortunately, the computation time required is exponential in $n$ because the dimension of each term in the bar-complex grows exponentially as $O(|V|^n)$, so in some cases we are not able to get very close to the optimal slope. We will discuss the computation in more detail in Section \ref{sec: on homological computations}.

For arithmetic applications, the significance of the optimal vanishing slope of the associated braided vector space is that it usually determines the optimal power-savings bound for cancellation of the corresponding arithmetic sum. However, occasionally, one may obtain stronger cancellation for two possible reasons: the Frobenius weights may be smaller than expected from homological degree, or there may be additional cancellation among the top-weight Frobenius eigenvalues. It is probably possible to account for the first factor by using the weight filtration on quantum shuffle algebras introduced in \cite{ma2026weightfiltrationhurwitzspaces}, and we believe that analogous statements to Theorem \ref{thm: main extrapolating vanishing} and Corollary \ref{cor: optimal slope cor} should hold with the homological degree $i$ replaced by the true Frobenius weight, and ``slope'' replaced by ``weight slope''. However, more work has to be done, because \cite[Section 6]{ma2026weightfiltrationhurwitzspaces} only proved that the geometric weight filtration over $\C$ and $\F_p$ agrees for Hurwitz spaces, and this crucially uses a normal crossings compactification for Hurwitz space constructed in \cite[Appendix B]{ellenberglandesman}. For the applications in our paper, one would also have to prove a similar comparison theorem for local systems coming from representations of $S_n$ and character sheaves (Example \ref{eg: local system rep S_n} and \ref{eg: local system character sheaves}). We leave this as a possible direction for future work. On the other hand, for the second factor, it is very difficult to account for cancellation in Frobenius eigenvalues, but we believe in the general case that it is unlikely for the eigenvalues completely cancel one another unless there is some special reason for this to occur. 

In Section \ref{sec: individual BVS}, we will attempt to compute slope bounds that approach the optimal slope for some individual examples of braided vector spaces, in line with what Corollary \ref{cor: optimal slope cor} tells us. In some cases, we will also find it helpful to use the vanishing results deduced for families of twists obtained in the previous sections.
\subsubsection{Large cancellation for families of braided vector spaces} While Corollary \ref{cor: optimal slope cor} tells us that one can compute arbitrarily close to the optimal slope for any \textit{individual} braided vector space $V$, we are often interested in \textit{families} of braided vector spaces corresponding to families of arithmetic sums, and one cannot obtain a vanishing statement in families directly from computation. Nevertheless, there are ways to deduce results in families, although the vanishing slope in each case is no longer guaranteed to be optimal. For example, one can apply Corollary \ref{cor: es main theorem} or \cite[Theorem 1.1.1]{ES26} to a family of braided vector spaces by computing the $H_0$ across the family, and this was carried out in \cite{ES26}. However, this can only give a maximum slope of $\frac 1 3$ which is achieved when $N=2$, corresponding to a power-savings of $q^{5/6}$. 

In Section \ref{sec: criteria}, we give two other ways to apply Theorem \ref{thm: main extrapolating vanishing} in families that goes past this $\frac 1 3$ limit and can give homological vanishing of slope arbitrarily close to $1$, which corresponds to near square-root cancellation for arithmetic applications. Firstly, instead of looking at the ``minimal'' staircase of $(N,0)$ as in Corollary \ref{cor: es main theorem}, we look at the ``maximal'' staircase where $(N,N-2)$ (note that because $H_0(B_1,V)=V$ is always nonzero, we cannot have a vanishing staircase of $(N,I)=(N,N-1)$). This would give a vanishing slope of $\frac{N-1}{N+1}$ by Theorem \ref{thm: main extrapolating vanishing}. For this to be useful, we showed that to prove vanishing in an $(N,N-2)$ staircase, it suffices to check that certain shuffle products in the corresponding quantum shuffle algebra are isomorphisms. Secondly, in some cases, the center of the braid group acts via a nontrivial scalar which forces the group cohomology to be zero for some $n$. As a corollary to Theorem \ref{thm: main extrapolating vanishing}, we show that there is homological vanishing with slope at least the density of $n$ where homology vanishes.

Arguably, the most natural examples of families of braided vector spaces satisfying homological vanishing are families obtained by twisting a single braided vector space $V$. We will study the general case in Section \ref{sec: twists of BVS} by applying Theorem \ref{thm: main extrapolating vanishing} and the methods in Section \ref{sec: criteria}. In Section \ref{sec: special cases and extensions}, we will study special cases where we impose additional conditions on $V$ like having finite monodromy, coming from a rack of conjugacy classes, or $V^{\otimes n}$ being an $S_n$-representation. These additional assumptions lead to much stronger results, and they turn out to be relevant to our arithmetic applications later. We also tackle the ``multicolored'' case where there are multiple braided vector spaces. Finally, we will deduce our arithmetic applications from the corresponding homological results in Section \ref{sec: arithmetic applications}.
\subsection{Application to bias in Gauss sums and Patterson's conjecture}\label{sec: intro patterson} In 1846, Kummer \cite{Kummer_1846} studied the distribution of cubic exponential sums of the form
$$S_p=\sum_{x=0}^{p-1} e(x^3/p)$$
where $p\equiv 1\pmod 3$ is a prime and we write $e(x)=e^{2\pi ix}$. This exponential sum is closely related to Gauss sums, which we recall as follows. Given a nontrivial multiplicative character $\chi\colon \F_p^\times \rightarrow \C^\times$, we can attach a Gauss sum
$$G(\chi)=\sum_{x=0}^{p-1}\chi(x)e(x/p).$$
It is known that $|G(\chi)|=\sqrt p$ for all $\chi$, but while its absolute value is known, its argument is difficult to determine in general. Now, let $\chi_3$ be a cubic character, then it follows directly that
\begin{equation}\label{eqn: S_p to Gauss}
S_p=\sum_{x=0}^{p-1}\left(1+\chi_3(x)+\bar\chi_3(x)\right)e(x/p)= G(\chi_3)+G(\bar\chi_3).
\end{equation}
By writing $G(\chi_3)=\sqrt p e^{i\theta_p}$, where $\theta_p\in [-\pi,\pi]$ is determined up to sign depending on the choice of cubic character, we have $$S_p=2\sqrt p \cos(\theta_p).$$

Kummer asked what the distribution of $\theta_p$ is as $p$ varies across primes. He computed the frequency that $\cos(\theta_p)\in [-1,1]$ lies in the intervals $I_1=[-1,-\frac 1 2], I_2 = [-\frac 1 2, \frac 1 2], I_3=[\frac 1 2,1]$ for primes $p\le 500$, and found that it was in a $1\colon 2\colon 3$ ratio, which seems to imply that $\theta_p$ is not equidistributed. We refer the reader to \cite[Chapter 3]{Davenport_2009} for more details.

However, more than a hundred years later, Heath-Brown and Patterson \cite{Heath-Brown_Patterson_1979} showed instead that $\theta_p$ was equidistributed, disproving Kummer's conjecture. Patterson also conjectured the following stronger statement about the order of the bias in $S_p$.
\begin{conjecture}[{\cite{Patterson_1978}}]\label{conj: patterson} We have
$$\sum_{\substack{p\le X\\p\equiv 1\Mod{3}}} \frac{S_p}{2\sqrt p} \asymp \frac{X^{5/6}}{\log X}$$
as $X\rightarrow \infty$.
\end{conjecture}
This conjecture explains why Kummer saw such a large discrepancy between the counts in each interval. The upper bound was proven unconditionally in \cite{Heath-Brown_2000}, and the lower bound (with a smooth cutoff) was proven assuming the Generalized Riemann Hypothesis recently in \cite{Dunn_Radziwill_2024}.

Patterson's conjecture can also be stated in terms of Gauss sums over Eisenstein integers, essentially by using Equation \eqref{eqn: S_p to Gauss}, and this formulation will be useful for our function field analogue later. We sketch this and refer the reader to the exposition in \cite[Section 1.2]{Dunn_Radziwill_2024} for more details. For any rational prime $p\equiv 1\pmod 3$, we can factor $p=\pi\bar\pi$ in $\Z[\zeta_3]$ where we write $\zeta_m=e^{2\pi i/m}$ as the $m$-th root of unity. Furthermore, by modifying by units, there is exactly one factorization where $\pi\equiv \bar \pi \equiv 1 \pmod 3$. In analogy to the cubic character $\chi_3$ on $\F_p^\times$, one can define a cubic symbol $\left(\frac a b\right)_3$ for $a,b\in \Z[\zeta_3]$ that is multiplicative in both variables and takes values in $\{1,\zeta_3,\zeta_3^2\}$. Then, for a prime $\pi\equiv 1\pmod 3$ in $\Z[\zeta_3]$, we define the corresponding Gauss sum as 
\begin{equation}\label{eqn: gauss over eisenstein}
G_3(\pi)=\sum_{a\in \Z[\zeta_3]/\pi}\left(\frac a \pi\right)_3 \psi\left(\frac{a}{\pi}\right)
\end{equation}
where $\psi(z)=e^{2\pi i(z+\bar z)}$, and we can rewrite the conjecture as
\begin{equation}\label{eqn: rewritten patterson}
\sum_{\substack{p\le X\\p\equiv 1\Mod{3}}} \frac{S_p}{2\sqrt p} = \sum_{\substack{N(\pi)<X\\ \pi \text{ prime}\\ \pi\equiv 1\Mod 3}} \frac{G_3(\pi)}{|\pi|}
\asymp \frac{X^{5/6}}{\log X}. 
\end{equation}

One could ask the same question for higher Gauss sums, i.e. with $3$ replaced by another order $o$, like in \cite{Patterson_1978}. The case of $o=2$ has an asymptoptic of $X/\log X$ by the formula for quadratic Gauss sums in \cite{Gauss_1801}. For $o=4$, \cite{david2026quarticgausssumsprimes} conjectured an asymptoptic of $X^{3/4}/\log X$ while proving an upper bound (with a smooth cutoff) on the order of $X^{5/6}/\log X$. As far as we know, there are no conjectures or results for $o>4$.

Now, we turn to the case for function fields. To set this up, let $q=p^e$ be an odd prime power and $\chi\colon \F_q^\times \rightarrow \C^\times$ be a multiplicative character, and denote its order by $\ord(\chi)$. Given a rational function $f\in \F_p(t)$, we let $\res(f)$ be its residue at infinity, i.e. the coefficient of $t^{-1}$ when $f$ is expressed as a formal Laurent series, and define the additive character $\psi(x) = e^{2\pi i \tr^{\F_p}_{\F_q}x/ p}$. The Gauss sum associated to $\chi$ and a polynomial $\pi\in \F_p[t]$ is then
$$G_\chi(\pi)=\sum_{a\in \F_q[t]/\pi} \left(\frac{a}{\pi}\right)_\chi \psi \left(\res\left(\frac a \pi \right)\right)$$
as defined in \cite[Section 2]{Sawin_2024}, in direct analogy to Equation \eqref{eqn: gauss over eisenstein}.

\subsubsection{Results for order $o=p^k,2p^k$}
Our first result is an upper bound for the bias of Gauss sums over function fields in the case where the order $o$ is a prime power or twice of a prime power, whose power-savings match the known results in $o=2,3$ as well as the conjecture for $o=4$ in the number field case. 
\begin{theorem}\label{thm: gauss sums prime powers} Let $p$ be a prime and $k\ge 1$ be an integer. Suppose that $\chi\colon \F_q^\times \rightarrow \C^\times$ is a non-trivial character with order $o=\ord(\chi)$ where $o=p^k$; if $p$ is odd, we also allow $o=2p^k$. Then,
$$\left|\sum_{\substack{\deg(\pi)=n\\ \pi \text{ prime}}} \frac{G_\chi(\pi)}{q^{n/2}} \right|\le \frac{2^{2n-2}}{n} q^{\left(\frac 1 2 +\frac 1 {p^k}\right)(n-1)+1}.$$
\end{theorem}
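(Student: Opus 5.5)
We reduce the prime sum to a statement about $H_i(B_n, V_\chi^{\otimes n})$ for a suitable braided vector space $V_\chi$, prove a vanishing line of slope $1-\frac{2}{p^k}$ using the results of Section~\ref{sec: special cases and extensions}, and convert this into the stated bound via the Grothendieck--Lefschetz trace formula.

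\textbf{Geometric translation.} Following \cite{Sawin_2024}, the Gauss sum $G_\chi(\pi)$ over monic squarefree $\pi$ of degree $n$ is the Frobenius trace of a rank-one local system on $\Conf^n(\A^1)$. This local system comes from a one-dimensional braided vector space $V_\chi$ whose braiding is multiplication by a root of unity $\zeta$ of order $o$. It is the Kummer-type twist attached to $\chi$, tensored with the Artin--Schreier sheaf for the residue. So $V_\chi^{\otimes n}$ is the character of $B_n$ sending each generator to $\zeta$, up to a sign from the $q^{n/2}$ normalization.

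To pass from squarefree $\pi$ to primes, we use M\"obius inversion over factorization types, equivalently the $S_n$-representation trick of Example~\ref{eg: local system rep S_n}. The prime sum is the trace on the compactly supported cohomology of $\Conf^n$ with coefficients in $V_\chi^{\otimes n}\otimes \rho$, where $\rho$ is the representation $\Ind_{C_n}^{S_n}(\text{faithful character})$ pulled back to $B_n$. The dimension of this twisted local system is $(n-1)!$. We instead bound the total dimension of homology through the bar complex, which has size at most $\binom{2n-2}{n-1}\le 2^{2n-2}$. The factor $\frac1n$ comes from the $\frac1n\sum_{d\mid n}\mu(d)\cdots$ normalization.

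\textbf{Vanishing line.} This is the main step. We claim that $H_i(B_n, V_\chi^{\otimes n}\otimes\rho)=0$ for $i< (1-\frac{2}{p^k})(n-1)$, roughly.

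The center of $B_n$, generated by the full twist $\Delta^2$, acts on $V_\chi^{\otimes n}$ by $\zeta^{n(n-1)}$ and on $\rho$ by a computable root of unity. Whenever the product of these scalars is nontrivial, the homology vanishes. This lets us invoke the density corollary of Section~\ref{sec: criteria}: homological vanishing holds with slope at least the density of $n$ at which the homology vanishes.

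For $o=p^k$ or $2p^k$, we count the residues of $n$ mod $p^k$ for which $\zeta^{n(n-1)}$ times the $\rho$-scalar is trivial. We expect exactly $2$ such residues out of $p^k$, namely $n\equiv 0,1$, which gives density $1-\frac{2}{p^k}$. The prime-power hypothesis is what makes $n(n-1)\equiv 0 \pmod{p^k}$ force $n\equiv 0$ or $1$, since $n$ and $n-1$ are coprime. The extra factor $2$ when $p$ is odd only affects a sign that can be absorbed.

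We then apply Theorem~\ref{thm: main extrapolating vanishing} to a staircase built from these vanishing $n$. This needs the multicolored/twisted version from Section~\ref{sec: special cases and extensions}, since $\rho$ is not a tensor power. We should get vanishing for $i\le (1-\frac{2}{p^k})(n-1)$ up to an additive constant. Pinning this constant down so that the final exponent is exactly $(\frac12+\frac1{p^k})(n-1)+1$ is what we expect to be the delicate bookkeeping.

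\textbf{Trace formula.} By Grothendieck--Lefschetz and Deligne's weight bounds, the sum is bounded by $\sum_i \dim H^{2n-i}_c\cdot q^{(2n-i)/2}$, suitably normalized by $q^{-n/2}$. Only degrees with $i\ge (1-\frac{2}{p^k})(n-1)$ contribute. Each such term is at most $q^{n-\frac i2}$, which is bounded by $q^{\frac{n}{2}\cdot\ldots}$, giving $q^{(\frac12+\frac1{p^k})(n-1)+1}$ after normalization. Multiplying by the total Betti number bound $2^{2n-2}/n$ gives the stated inequality.
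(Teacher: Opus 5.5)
There is a genuine gap in your geometric translation, and it breaks the vanishing step and the Betti bound that depend on it. Theorem~\ref{thm: main extrapolating vanishing}, Corollary~\ref{cor: density} and the bar-complex Betti bound all concern $H_i(B_n,W^{\otimes n})$ for \emph{one} braided vector space $W$ and all $n$ at once. The proof filters the bar complex $B_n(\mathfrak A(W^*_{-1}))$ into pieces built from $B_m(\mathfrak A(W^*_{-1}))$ with $m\le N$.

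Your coefficients $V_\chi^{\otimes n}\otimes\rho_n$ are not of this form. Section~\ref{sec: multicolored case} does not fix this: it only twists tensor powers of a direct sum $V_1\oplus\cdots\oplus V_k$, and cannot absorb an arbitrary sequence of $S_n$-representations. Likewise, $\binom{2n-2}{n-1}$ is not the size of any complex computing your homology; the natural Fox--Neuwirth cell count would be $2^{n-1}(n-1)!$.

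Moreover, $\rho=\Ind_{C_n}^{S_n}(\text{faithful})$ is the wrong representation. It has dimension $(n-1)!$, so its trace at a totally split $f$ is $(n-1)!\neq 0$. It is the Lie representation, whose character is supported on the cycle types $d^{n/d}$, not only on $n$-cycles. The class function you need is
\[
\mathbf 1_{\text{irr}}(f)=\tfrac1n\det(1-\sigma_f\mid\text{std}_n)=\tfrac1n\sum_k(-1)^k\tr(\sigma_f\mid\wedge^k\text{std}_n),
\]
which is Equation~\eqref{eqn: formula for 1_irr}. This is a virtual representation of total dimension zero.

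The missing idea is Equation~\eqref{eqn: C_wedge decomp}: $\C_\wedge^{\otimes n}\cong\bigoplus_k(\wedge^k\text{std}_n)^{\oplus 2}$. By \eqref{eqn: formula for G_chi(f)} the relevant Kummer character is $\chi\cdot\xi$, i.e.\ the root of unity $-\zeta$. Together these exhibit every $\mathcal L_{\wedge^k\text{std}_n}\otimes\delta^*\mathcal L_{\chi\xi}$ as a summand of the genuine tensor power $(\C_\wedge)_{-\zeta}^{\otimes n}$.

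From there your fundamental-braid argument is exactly Proposition~\ref{prop: additional conditions twist}(c). Here $\ord(-\zeta)$ is $2p^k$, $p^k$ or $2^k$; it is $1$ when $o=2$, where the claimed bound $q^n$ is trivial. In every case it does not divide $n(n-1)$ for $2\le n\le p^k-1$.

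The paper feeds the resulting $(p^k-1,p^k-3)$ staircase into Theorem~\ref{thm: main extrapolating vanishing}. Your density route through Corollary~\ref{cor: density}, with $S\subseteq\{1\}\cup\{n\equiv 0,1 \bmod p^k\}$, gives the identical line. The bookkeeping you left open comes out as: $H^i_c\neq 0$ only for $i\le(1+\frac{2}{p^k})(n-1)+2$, which yields the exponent $(\frac12+\frac1{p^k})(n-1)+1$. The factor $2^{2n-2}$ is half the bar-complex size $2^{n-1}\cdot 2^n$ for $\C_\wedge$, the half coming from the multiplicity two in \eqref{eqn: C_wedge decomp}.
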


This arithmetic sum corresponds to twists of the braided vector space $\C_\wedge$ by a root of unity. In our proof, we will use the fact that $\C_\wedge^{\otimes n}$, the tensor power of the untwisted braided vector space, descends to a $S_n$-representation. We also remark that the twisted tensor power $\C_\wedge^{\otimes n}$ factors through a lift $\hat S_n$ of $S_n$, and such representations were also considered in \cite{Miller_Tosteson_2021}.

Note that for an odd prime $p$ and integer $k\ge 1$, the bounds for the sums with order $p^k$ and $2p^k$ given in Theorem \ref{thm: gauss sums prime powers} are the same. This is because these two character sums correspond to two different twists of the braided vector space $\C_\wedge$ that differ by the sign representation, and from this one can deduce that their homology is the same because of the fact that the $B_n$-representations $\C_\wedge^{\otimes n}$ and $\C_\wedge^{\otimes n}\otimes \text{sgn}$ are isomorphic. In general, for an odd integer $o$, homological methods give the same bounds for order $o$ and $2o$.

The key homological lemma behind the proof is to show for the relevant braided vector space $V$ which is $\C_\wedge$ twisted by $o$ that $H_i(B_n,V^{\otimes n})$ vanishes when $o$ does not divide $n(n-1)$. Then, when $o=p^k$ or $o=2p^k$ (for $p$ odd), this tells us that the homology vanishes for $2\le n\le p^k-1$, and applying Theorem \ref{thm: main extrapolating vanishing} for the ``maximal'' staircase will prove a homological vanishing slope that implies Theorem \ref{thm: gauss sums prime powers}.

We conjecture that Theorem \ref{thm: gauss sums prime powers} is sharp, in the sense that the exponent $\frac 1 2 +\frac 1 {p^k}$ in the exponent of $q$ is the best possible. We also believe that the $2^{2n-1}$ term coming from the Betti-number bound can be improved, as most of the homology should be concentrated away from the slope of vanishing. We state our conjecture as follows, and we also include the analogous conjecture in the number field case.
\begin{conjecture}\label{conj: gauss order prime power}
Let $p$ be a prime and $k\ge 1$ be an integer. Suppose that $\chi\colon \F_q^\times \rightarrow \C^\times$ is a non-trivial character with order $o=\ord(\chi)$ where $o=p^k$ or, if $p$ is odd, we also allow $o=2p^k$. Then,
$$\left|\sum_{\substack{\deg(\pi)=n\\ \pi \text{ prime}}} \frac{G_\chi(\pi)}{q^{n/2}} \right| = q^{\left(\frac 1 2 +\frac 1 {p^k} +o(1)\right)n}.$$

Similarly, in the number field case, the bias in $o$-th order Gauss sums (stated analogously to Equation \eqref{eqn: gauss over eisenstein}) where $o=p^k$ or, if $p$ is odd, we also allow $o=2p^k$, is $(1+o(1))X^{\frac 1 2 +\frac 1 {p^k}}/\log X$.
\end{conjecture}

We give two lines of reasoning to support this conjecture, first from a homological vanishing standpoint (for function fields) and then from an analytic perspective.

\begin{figure}[h]
    \centering
\begin{tikzpicture}[x=1cm,y=1cm,scale=0.5]
\drawnaxis{13}
\drawiaxis{12}
\drawstaircase{1}{13}{0}{black!10}{0}
\drawstaircase{2}{2}{0}{red!40}{0}
\drawstaircase{5}{5}{1}{blue!40}{0}
\drawstaircase{8}{8}{2}{blue!40}{0}
\drawstaircase{11}{11}{3}{blue!40}{0}
\drawblock{3}{13}{0}{0}{blue!40}{0}
\drawblock{6}{13}{1}{1}{blue!40}{0}
\drawblock{9}{13}{2}{2}{blue!40}{0}
\drawblock{12}{13}{3}{3}{blue!40}{0}
\foreach \n/\i/\txt in {
    1/0/2,
    3/1/2,
    3/2/2,
    4/1/2,
    4/2/4,
    4/3/2,
    6/2/2,
    6/3/4,
    6/4/8,
    7/2/2,
    7/3/4,
    7/4/10,
    7/5/16,
    7/6/8,
    9/3/2,
    9/4/4,
    9/5/8,
    9/6/22,
    9/7/32,
    9/8/16,
    10/3/2,
    10/4/4,
    10/5/8,
    10/6/22,
    10/7/44,
    10/8/58,
    10/9/30,
    12/4/2,
    12/5/4,
    12/6/8,
    12/7/20,
    12/8/46,
    12/9/98,
    12/11/78,
    13/4/2,
    13/5/4,
    13/6/8,
    13/7/20,
    13/8/46
 }{
    \cell{\n}{\i}{\txt}
  }
\foreach \n/\i/\txt in {
    12/10/144,
    13/9/100,
    13/10/204,
    13/11/276,
    13/12/140
 }{
    \smallcell{\n}{\i}{\txt}
  }
\end{tikzpicture}
\begin{tikzpicture}[x=1cm,y=1cm,scale=0.5]
\drawnaxis{13}
\drawiaxis{12}
\drawstaircase{1}{13}{0}{black!10}{0}
\drawstaircase{2}{3}{0}{red!40}{0}
\drawstaircase{6}{7}{2}{blue!40}{0}
\drawstaircase{10}{11}{4}{blue!40}{0}
\drawblock{4}{13}{0}{1}{blue!40}{0}
\drawblock{8}{13}{2}{3}{blue!40}{0}
\drawblock{12}{13}{4}{5}{blue!40}{0}
\foreach \n/\i/\txt in {
1/0/2,
4/2/4,
4/3/4,
5/2/4,
5/3/8,
5/4/4,
8/4/4,
8/5/12,
8/6/20,
8/7/12,
9/4/4,
9/5/10,
9/6/24,
9/7/38,
9/8/20,
12/6/4,
12/7/8,
12/8/20,
12/9/72,
12/11/72,
13/6/4,
13/7/8,
13/8/16,
13/9/56
  }{
    \cell{\n}{\i}{\txt}
  }
\foreach \n/\i/\txt in {
12/10/128,
13/10/152,
13/11/240,
13/12/132
 }{
    \smallcell{\n}{\i}{\txt}
  }
\end{tikzpicture}
    \caption{Homology of $\C_\wedge$ twisted by $\zeta_3$ or $\zeta_6$ (left), and twisted by $\zeta_4$ (right).}\label{fig: C_wedge zeta 3,6,4}
\end{figure}
Homologically, our computations suggest that the bound we obtain for the slope of homological vanishing slope for the associated braided vector space is sharp in every order we examined. Consider the two examples of (a) order $o=3,6$ (recall that they have the same homology) and (b) order $o=4$, with their corresponding associated braided space $\C_\wedge$ twisted by $\zeta_3$ or $\zeta_6$ in (a) and $\zeta_4$ in (b). We compute the dimension of their respective homologies in Figure \ref{fig: C_wedge zeta 3,6,4}. We will prove that the homology vanishes in the red ``maximal'' staircases, leading to vanishing in the blue region of slope $1/3$ and $1/2$ respectively, which further corresponds to a power-savings of $q^{5/6}$ and $q^{3/4}$ which agrees with Theorem \ref{thm: gauss sums prime powers}. From the data in Figure \ref{fig: C_wedge zeta 3,6,4}, it seems reasonable to conclude that the slope of homological vanishing is sharp. In fact, we observe something stronger: the dimensions of the nonzero homology for the $\zeta_3,\zeta_6$ twist seems to stabilize to $2,4,8,20,\cdots$, which matches the dimensions of homological stability for the untwisted $\C_\wedge$ as given in Figure \ref{fig: C_wedge wo twist}. Here, the blue region in Figure \ref{fig: C_wedge wo twist} indicates the stable homology of $\C_\wedge$, and we remark that homological stability here follows from representation stability of \cite{Church_Farb_2013}. Meanwhile, the homological dimensions for the $\zeta_4$ twist seems to stabilize to twice the dimension of $\C_\wedge$. We believe that this observation is true in general: the nonzero stable homology for $\C_\wedge$ twisted by $\zeta_{p^k}$ or $\zeta_{2p^k}$ should be isomorphic to the direct sum of several copies of the stable homology of the untwisted $\C_\wedge$. One could ask whether this isomorphism, if it exists, is Frobenius equivariant, and if this were true, would lead to a leading term for the bias in Gauss sums, and is an interesting direction for future research. 

\begin{figure}[h]
    \centering
\begin{tikzpicture}[x=1cm,y=1cm,scale=0.5]
\drawnaxis{11}
\drawiaxis{10}
\drawstaircase{1}{11}{0}{black!10}{0}
\drawblock{1}{11}{0}{0}{blue!40}{2}
\drawblock{3}{11}{1}{1}{blue!40}{4}
\drawblock{5}{11}{2}{2}{blue!40}{8}
\drawblock{7}{11}{3}{3}{blue!40}{20}
\drawblock{9}{11}{4}{4}{blue!40}{44}
\drawblock{11}{11}{5}{5}{blue!40}{96}
\foreach \n/\i/\txt in {
2/1/2,
3/2/2,
4/2/6,
4/3/2,
5/3/12,
6/3/18,
5/4/6,
6/4/22,
7/4/36,
8/4/42,
6/5/10,
7/5/40,
8/5/72,
9/5/88,
10/5/94,
7/6/18,
8/6/76,
8/7/32,
9/8/56
  }{
    \cell{\n}{\i}{\txt}
  }
\foreach \n/\i/\txt in {
9/6/146,
10/6/188,
11/6/204,
9/7/144,
10/7/292,
11/7/392,
10/8/270,
11/8/580,
10/9/102,
11/9/512,
11/10/186
 }{
    \smallcell{\n}{\i}{\txt}
  }
\end{tikzpicture}
    \caption{Homology of $\C_\wedge$.}\label{fig: C_wedge wo twist}
\end{figure}
We refer to \cite[Section 2]{Chinta_Friedberg_Hoffstein_2011} and \cite[Section 1.2]{david2026quarticgausssumsprimes} for our analytic reasoning. The Dirichlet series corresponding to the bias in Gauss sums is closely related to Kubota's Eisenstein series on the $o$-fold cover of $\SL_2$, and this has a rightmost simple pole at $\frac 1 2 + \frac 1 o$. For $o=p^k$, this leads to the correct asymptotic, but it does not match for the $o=2p^k$ ($p$ odd) case. For the latter case, we speculate why the asymptotic $\frac 1 2 +\frac 1 {p^k}$ could be larger than expected from the pole at $\frac 1 2 +\frac 1 {2p^k}$. When sieving for primes, one would need to consider the sum over arithmetic progressions, for example, this appears in the Type I sum as written in \cite[Equation (1.4), (1.8)]{david2026quarticgausssumsprimes}. The sum over the arithmetic progression with common difference $a$ is proportional to the residue of the relevant Kubota's Eisenstein series at $\frac 1 2 +\frac 1 {2p^k}$, but these appear as the $a^2$-th Fourier-Whittaker coefficients of the $2p^k$-th order theta function, essentially due to \cite[Equation (1.6)]{david2026quarticgausssumsprimes}. For odd integers $n$, \cite{Chinta_Friedberg_Hoffstein_2011} conjectures that the Dirichlet series corresponding to these square coefficients of the $2n$-th order theta function has as a factor the Dirichlet series of the $n$-th order bias in Gauss sum, see Conjecture 2.1 of loc. cit. for the $n=3$ case and the end of Section 5 of loc. cit. for the general case. Substituting $n=p^k$, this contributes a pole of $\frac 1 2 +\frac 1 {p^k}$ to the Dirichlet series of square coefficients, so it seems plausible that the growth of the square coefficients of the theta function leads to this asymptotic. As in this argument, we believe that the behavior of the sum over prime arguments and the sum over squarefree arguments differs.
\subsubsection{Results for general order $o$}
It is less clear for general order $o$ what the asymptoptic of the bias in Gauss sums should be. We prove the following upper bound.
\begin{theorem}\label{thm: gauss sums general order} Suppose that $\chi\colon \F_q^\times \rightarrow \C^\times$ is a non-trivial character with order $o=\ord(\chi)$, and let $\omega(o)$ be the number of distinct prime factors of $o$. Then, we have 
$$\left|\sum_{\substack{\deg(\pi)=n\\ \pi \text{ prime}}} \frac{G_\chi(\pi)}{q^{n/2}} \right|\le \frac{2^{2n-2}}{n} q^{\left(\frac 1 2 +\frac {2^{\omega(o)-1}}{o}\right)n+C}$$
for some explicit constant $C$ depending only on $o$.
\end{theorem}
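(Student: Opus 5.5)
Following the template of Theorem \ref{thm: gauss sums prime powers}, I reduce the arithmetic sum to the homology of a twisted braided vector space, prove a homological vanishing line for it, and convert that line into a power saving. Let $V=\C_\wedge$ twisted by a primitive $o$-th root of unity $\zeta_o$. By the Grothendieck--Lefschetz trace formula and étale/complex comparison, the normalized sum over primes of degree $n$ is a trace of Frobenius on the compactly supported cohomology of a rank-one local system on $\Conf^n$. By Poincaré duality this cohomology is dual to $H_i(B_n,V^{\otimes n})$, up to the passage from squarefree to prime arguments; I will handle that passage exactly as in the proof of Theorem \ref{thm: gauss sums prime powers}, with the same sign and Möbius bookkeeping. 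Deligne's weight bound gives each degree-$i$ contribution size at most $q^{n-i/2}$ times the Betti number. The Betti numbers are bounded by the dimension of the bar complex, which yields the factor $2^{2n-2}$ after dividing by $n$ for the prime count. So it suffices to show that $H_i(B_n,V^{\otimes n})=0$ for $i\le s n - C'$ with
\[
s=1-\frac{2^{\omega(o)}}{o}.
\]
Then $q^{n-i/2}\le q^{(1/2+2^{\omega(o)-1}/o)n+C}$ after the normalization by $q^{n/2}$.

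The input is the key lemma mentioned after Theorem \ref{thm: gauss sums prime powers}: $H_*(B_n,V^{\otimes n})=0$ whenever $o\nmid n(n-1)$. The reason is that the full twist, which generates the center of $B_n$, acts by a scalar $\zeta_o^{n(n-1)}$, possibly times a sign that is absorbed as in the $o$ versus $2o$ discussion. The center acts trivially on group homology, so the homology is killed when this scalar is not $1$. Now I count such $n$. The condition $o\mid n(n-1)$ means that for each prime power $p^a\| o$ we have $n\equiv 0$ or $1 \pmod{p^a}$, since $n$ and $n-1$ are coprime. By the Chinese remainder theorem this allows exactly $2^{\omega(o)}$ residues modulo $o$. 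So the set of $n$ with possibly nonzero homology has density $2^{\omega(o)}/o$, and the vanishing set has density $1-2^{\omega(o)}/o$.

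Next I apply the corollary of Theorem \ref{thm: main extrapolating vanishing} from Section \ref{sec: criteria}: if homology vanishes for a set of $n$ of density $d$, there is homological vanishing with slope at least $d$. If that corollary is stated only with some loss, I would redo the argument by hand. Take $N=o-2$ and build a vanishing staircase. The staircase condition needs vanishing for $i\le I+n-N$, and in every column $n\le N$ where the homology vanishes entirely the condition holds automatically. The obstruction is the columns $n\equiv 0,1$ modulo each prime power, where nothing vanishes.

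The main obstacle is to show that a staircase with $I+1$ close to $(N+1)(1-2^{\omega(o)}/o)$ is achievable. Rather than a single staircase, I would use the filtration argument of Section \ref{sec: proof of main theorem} directly: each bad column costs one unit of homological degree per period of length $o$, and there are $2^{\omega(o)}$ bad columns per period. Along the way I must check that $n=1$, where $H_0=V\neq 0$, is counted among the bad residues; it is, since $n=1$ satisfies $o\mid n(n-1)$. This gives vanishing for $i\le (1-2^{\omega(o)}/o)n - C'$, with $C'$ depending only on $o$ and absorbing the boundary effects, and the constant $C$ in the statement then comes from $C'$.
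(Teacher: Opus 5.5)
Your proposal is correct and is essentially the paper's proof. You reuse the reduction from the proof of Theorem \ref{thm: gauss sums prime powers}, kill all homology when $o\nmid n(n-1)$ via the central full twist (Proposition \ref{prop: additional conditions twist}(c)), and convert the density $2^{\omega(o)}/o$ of surviving $n$ into slope $1-2^{\omega(o)}/o$ using Corollary \ref{cor: density}, as packaged in Corollary \ref{cor: density for case (b), (c)}; that corollary loses only an additive constant, so your fallback staircase construction is unnecessary. One small clarification: the relevant twist is by $-\zeta$, coming from $\chi\cdot\xi$, rather than by $\zeta$. Since $n(n-1)$ is even, the full twist still acts by $\zeta^{n(n-1)}$, which is exactly why the paper's quantity $2^{\omega(\ord(-\zeta))}/\ord(-\zeta)$ coincides with $2^{\omega(o)}/o$.
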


Note that this means that for any $\epsilon>0$, all but finitely many characters $\chi$ have cancellation with exponent smaller than $\frac 1 2+\epsilon$. Furthermore, the exponent agrees with Theorem \ref{thm: gauss sums prime powers} in the special cases of $o=p^k,2p^k$. However, outside of these special cases, the slope of $\frac 1 2 +\frac{2^{\omega(o)-1}}{o}$ is not sharp at all. For most orders, it is rather easy to prove a sharper slope. This can be done by computation in the spirit of Corollary \ref{cor: optimal slope cor}. Alternatively, without any explicit computation of homology, we can use the lemma that the homology vanishes for $n$ where $o\nmid n(n-1)$, and apply Theorem \ref{thm: main extrapolating vanishing} multiple times in ad-hoc ways on various vanishing staircases, depending on the small set of $n$ for which homology does not vanish. Often, a combination of the two methods will lead to a better bound.

We give examples of the vanishing slopes we can prove for various orders $o$ as well as the corresponding exponent in the power-savings bound in Figure \ref{fig: table order}, and compare it to the exponent we get from Theorem \ref{thm: gauss sums general order}. Note that most of these are likely not sharp. The corresponding homological results are obtained in Section \ref{sec: individual C_wedge}.

\begin{figure}[h]
\centering
\begin{tabular}{>{\centering\arraybackslash}m{3cm}
                >{\centering\arraybackslash}m{3cm}
                >{\centering\arraybackslash}m{3cm}
                >{\centering\arraybackslash}m{3cm}}
\toprule
\textbf{Order} $\mathbf{o}$ & \textbf{Vanishing slope} & \textbf{Exponent} & $\mathbf{\frac{1}{2}+\frac{2^{\omega(o)-1}}{o}}$ \\
\midrule
$12$ & $\frac 5 6 -\epsilon$ & $\frac 1 2 +\frac 1 {12} + \epsilon$ & $\frac 1 2 +\frac 1 6$ \\
 \midrule
$15,30$ & $\frac{21}{25}$ & $\frac 1 2 + \frac 2 {25}$ & $\frac 1 2 + \frac 2 {15}$\\
\midrule
$20$ & $\frac{17}{20}$ & $\frac 1 2 +\frac 3 {40}$ & $\frac 1 2 + \frac{1}{10}$ \\
\midrule
$60$ & $\frac 9 {10}$ & $\frac 1 2+ \frac 1 {20}$ & $\frac 1 2 + \frac 1 {15} $\\
\midrule
$105,210$ & $\frac{33}{35}-\epsilon$ & $\frac 1 2 + \frac 1 {35}+\epsilon$ & $\frac 1 2+\frac 4 {105}$\\
\bottomrule
\end{tabular}
\caption{Bounds for some orders for any $\epsilon>0$.}\label{fig: table order}
\end{figure}
We believe the true exponent should be somewhere between $\frac 1 2+\frac 1 o$ and $\frac 1 2 +\frac {2^{\omega(o)-1}}{o}$, because of Theorem \ref{thm: gauss sums general order} and the pole of Kubota's Eisenstein series at $\frac 1 2 +\frac 1 o$ as mentioned above. As we managed to prove an upper bound of $\frac 1 2 +\frac 1 o+\epsilon$ when $o=12$, this leads us to believe that the true exponent is more likely to be $\frac 1 2+\frac 1 o$, although we do not have enough evidence to make this a conjecture.
\subsection{Application to Möbius and character sums over Galois $G$-extensions}\label{sec: intro character sums G-extensions} We give another arithmetic application of our technique, this time to Möbius and character sums over $G$-extensions of the rational function field $\F_q(t)$. We set this up following \cite{ES26}. Let $G$ be a group and $R=R_1\sqcup \cdots \sqcup R_n\subseteq G$ be a union of conjugacy classes, and for simplicity we assume that each $R_i$ is closed under $q$-th powering, i.e. $g\in R_i\Rightarrow g^q\in R_i$. For example, one could simply take $q\equiv 1\pmod {|G|}$ by Lagrange's theorem. Define $\mathcal E^{R}_q(G;n_1,\ldots, n_k)$ to be the set of $G$-extensions of $\F_q(t)$ split over $\infty$, where the sum of degrees of all places with monodromy in $R_j$ is $n_j$, and let $n=n_1+\cdots +n_k$. In addition, we fix an an extension of the infinite place of the function field to its separable closure, which effectively marks a point over $\infty$ of the corresponding branched cover. Furthermore, denote $f_L\in \F_q[t]$ to be the conductor of $L\in \mathcal E^{R}_q(G;n_1,\ldots, n_k)$, this is defined to be the radical of the discriminant of $L$, or alternatively the monic squarefree polynomial with the branch points of $L/\F_q[t]$ viewed as a branched cover over $\mathbb P^1_{\F_q}$. We refer the reader to \cite[Section 1.2]{ES26} for more details. 

\subsubsection{Möbius function} Recall that the Möbius function $\mu(f)$ of a polynomial $f\in \F_p[t]$ is defined by $\mu(f)=(-1)^{\omega(f)}$, where $\omega(f)$ is the number of distinct irreducible factors of $f$; this is completely analogous to the case over $\Z$. \cite[Theorem 1.2.4]{ES26} showed that there is cancellation with a power-savings for the sum of the Möbius function $\mu(f_L)$ over $\mathcal E^R_q(G;n_1,\ldots, n_k)$ if $q$ is large enough relative to $|G|$. This is not surprising because we expect the conductor $f_L$ to behave randomly. Explicitly, they prove 
\begin{equation}\label{eqn: mobius es}
\left| \sum_{L\in \mathcal E^{R}_q(G;n_1,\ldots, n_k)} \mu(f_L)\right | \le 2^{n-1}|R|^nq^{n-\frac{n-d}{2d+4}}  
\end{equation}
where $d=\deg(\bigoplus_{n=0}^\infty H_0(B_n,(\C R)_{-1}))\ge 1$ is finite. Here, $(\C R)_{-1}$ is the relevant braided vector space for this arithmetic problem, and we will define it in Example \ref{eg: BVS}. The bound follows from applying \cite[Theorem 1.1.1]{ES26} (or equivalently Corollary \ref{cor: es main theorem}), and as discussed earlier, because of this, the best power-saving exponent they can get is $\frac 5 6$ when $d=1$. 

Instead, we apply Corollary \ref{cor: optimal slope cor} to obtain better exponents for some choices of $G$ and $R$ by computing the homology up to $n\le N$, and we present our results in Figure \ref{fig: table mobius G,R}. Note that we can theoretically carry this out for any choice of $G,R$, but since the computation time is exponential with base $|R|$, this is more practical for small $|R|$. The corresponding homological results are obtained in Section \ref{sec: individual neg twists}.

\begin{figure}[h]
\centering
\begin{tabular}{>{\centering\arraybackslash}m{1.5cm}
>{\centering\arraybackslash}m{5cm}
>{\centering\arraybackslash}m{3cm}
>{\centering\arraybackslash}m{3cm}
>{\centering\arraybackslash}m{1cm}}
\toprule
$\mathbf{G}$ & $\mathbf{R}$ & \textbf{Vanishing slope} & \textbf{Exponent} & $\mathbf{N}$ \\
\midrule
$S_3$ & transpositions & $\frac 2 3$ & $\frac 2 3$ & 11\\
\midrule
$S_4$ & transpositions or 4-cycles & $\frac 3 7$ & $\frac {11}{14}$ & $6$\\
\midrule
$S_5$ & transpositions & $\frac 1 3$ & $\frac 5 6$ & $5$\\
\midrule
$D_5,D_7$ & reflections & $\frac 1 2$ & $\frac 3 4$ & 7\\  
\midrule 
$A_4$ & one conj class of $3$-cycles & $\frac 1 2$ & $\frac 3 4$ & $5$\\
\midrule
$C_5\rtimes C_4$ & $\{(x,1)\colon x\in C_5\}$ & $\frac 1 2$ & $\frac 3 4$ & $7$\\
\bottomrule
\end{tabular}
\caption{Bounds for some $G,R$.}\label{fig: table mobius G,R}
\end{figure}
Note that these bounds are not sharp, and computing the homology up to larger and larger values of $N$ appears to yield progressively stronger bounds. We predict that in most cases the true exponent will be close to $\frac 1 2$, which reflects true randomness of $f_L$. However, there seems to be some exceptions to this. For example, when $G=S_4$ and $R$ is the conjugacy class of transpositions or $4$-cycles (these have the same homology as the local systems are isomorphic by \cite[Example 5.1.7(3)]{Andruskiewitsch_Grana_2003}), the homology of $(\C R)_{-1}$ seems to have slope $\frac 1 2$ which corresponds to an exponent of $\frac 3 4$, as suggested empirically by Figure \ref{fig: transpositions in S_4}. Hence, we do not think that a uniform theorem for Möbius sums over all $G,R$ exists, and the best one can do is to compute for each individual choice of $G,R$.

\begin{figure}[h]
    \centering
\begin{tikzpicture}[x=1cm,y=1cm,scale=0.7]
\drawnaxis{7}
\drawiaxis{6}
\drawstaircase{1}{7}{0}{black!10}{0}
\drawstaircase{4}{6}{0}{red!40}{0}
\drawblock{7}{7}{0}{2}{blue!40}{0}
\foreach \n/\i/\txt in {
1/0/\mathbf{6},
2/0/3,
2/1/3,
3/1/\mathbf{6},
3/2/6,
4/1/3,
4/2/40,
4/3/37,
5/2/\mathbf{6},
5/3/168,
5/4/162,
6/3/39,
7/3/\mathbf{6},
7/4/42,
6/4/672,
6/5/633
  }{
    \cell{\n}{\i}{\txt}
  }
\foreach \n/\i/\txt in {
7/5/2610,
7/6/2574
 }{
    \smallcell{\n}{\i}{\txt}
  }
\end{tikzpicture}
\caption{Homology of $(\C R)_{-1}$ for $R$ the conjugacy class of transpositions or $4$-cycles in $S_4$.}\label{fig: transpositions in S_4}
\end{figure} 
\subsubsection{Character sums} Over function fields, if $q$ is not a power of $2$, the Möbius function can be written as $\mu(f)=(-1)^{\deg f}\xi(\disc (f_L))$, where $\xi\colon \F_q^\times \rightarrow \C^\times$ is the unique quadratic character. This motivates one to consider the sum of  $\chi(\disc(f_L))$ for a general character $\chi\colon \F_q^\times \rightarrow \C^\times$ instead. Generalizing Equation \eqref{eqn: mobius es}, \cite[Theorem 1.2.5]{ES26} prove the bound
$$\left| \sum_{L\in \mathcal E^{R}_q(G;n_1,\ldots, n_k)} \chi(\disc(f_L))\right | \le 2^{n-1}|R|^nq^{n-\frac{n-d}{2d+4}} $$
where $d=\deg(\bigoplus_{n=0}^\infty H_0(B_n,(\C R)_{\zeta}))\ge 1$ is finite, and $\zeta\in \C^\times$ is a root of unity of the same order as $\chi$. Instead, we prove the following bound.
\begin{theorem}\label{thm: character G extensions}
Suppose that $\chi\colon \F_q^\times \rightarrow \C^\times$ is a non-trivial character with order $o=\ord(\chi)$, let $o'=o/\gcd(o,|G|)$ and $\omega(o')$ be the number of distinct prime factors of $o'$. Then, we have 
$$\left| \sum_{L\in \mathcal E^{R}_q(G;n_1,\ldots, n_k)} \chi(\disc(f_L))\right | \le 2^{n-1}|R|^n q^{\left(\frac 1 2 +\frac {2^{\omega(o')-1}}{o'}\right)n+C}.$$
for some explicit constnat $C$ depending on $o'$.
\end{theorem}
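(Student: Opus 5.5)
The bound follows the template of Theorem \ref{thm: gauss sums general order}, with the braided vector space $(\C R)_\zeta$ in place of the twisted $\C_\wedge$. There are three steps: a Grothendieck--Lefschetz reduction, a homological vanishing lemma coming from the center of the braid group, and an application of Theorem \ref{thm: main extrapolating vanishing}.

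\textbf{Step 1 (reduction to homology).} Following \cite{ES26}, I would express the sum as a trace of Frobenius on the compactly supported cohomology of a local system on the Hurwitz-type space. By Poincaré duality this is the homology $H_i(B_n,(\C R)_\zeta^{\otimes n})$, with $\zeta$ a root of unity of order $o$. In the multicolored case one uses the appropriate colored braid group. Deligne's weight bounds together with the Betti bound $\dim H_i\le \binom{n-1}{i}|R|^n$ coming from the bar complex then give
\[
|\text{sum}|\le 2^{n-1}|R|^n\, q^{\,n-\frac{1}{2}\min\{i:\ H_i\neq 0\}}.
\]
So it suffices to show $H_i=0$ for $i\le s n - C'$, where $s=1-\frac{2^{\omega(o')}}{o'}$. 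With this $s$ we get $n-\frac{sn}{2}=\left(\frac12+\frac{2^{\omega(o')-1}}{o'}\right)n$, which is the claimed exponent.

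\textbf{Step 2 (central vanishing).} The full twist $\Delta^2\in Z(B_n)$ acts on $(\C R)^{\otimes n}$ through the rack structure; it sends each tuple to its conjugate by $g_1\cdots g_n$. The $\zeta$-twist contributes the scalar $\zeta^{n(n-1)}$. The homology is computed through coinvariants of a central element, and $\Delta^2$ acts trivially on group homology. Splitting $(\C R)^{\otimes n}$ according to the value of the product $g=g_1\cdots g_n$, conjugation by $g$ has order dividing $|G|$. So $\Delta^2$ acts on this summand as $\zeta^{n(n-1)}$ times an operator of order dividing $|G|$. Raising to the power $m=\gcd(o,|G|)$ or $|G|$, any eigenvalue $\lambda$ of $\Delta^2$ satisfies $\lambda^{|G|}=\zeta^{n(n-1)|G|}$. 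This must equal $1$ on invariants, because of the central-element argument: central elements act trivially on $H_*(B_n,-)$, so $H_*$ only sees the eigenvalue-$1$ part. We therefore get $H_*=0$ unless $o' \mid n(n-1)$. I expect this step to need care: one must check that the rack conjugation really has order dividing $|G|$ after combining it with the $\zeta$-scalar, and that the relevant root of unity has exact order $o'$.

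\textbf{Step 3 (extrapolation).} The remaining task is the purely combinatorial one also needed for Theorem \ref{thm: gauss sums general order}. The number of residues $n \bmod o'$ with $o'\mid n(n-1)$ is $2^{\omega(o')}$, by CRT, since each prime power must divide $n$ or $n-1$. So homology is nonzero for only $2^{\omega(o')}$ residues mod $o'$. I would invoke the corollary to Theorem \ref{thm: main extrapolating vanishing} from Section \ref{sec: criteria}: if homology vanishes for all $n$ outside a set of density $\delta$, then there is vanishing with slope at least $1-\delta$ up to a constant. This can be proved by choosing $N\approx k o'$ and a staircase $(N,I)$ where $I+1$ counts the vanishing columns. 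Applied with $\delta=2^{\omega(o')}/o'$, it yields vanishing for $i\le (1-\delta)n-C'$ with $C'$ depending only on $o'$. Combined with Step 1 this gives the stated bound with an explicit $C$. The main obstacle is Step 2, namely controlling the action of the center when the conjugation action of the product is nontrivial. If necessary, I would pass to the subspace where $g_1\cdots g_n$ is fixed, use that $\Delta^{2|G|}$ acts as the pure scalar $\zeta^{n(n-1)|G|}$, and argue with $\Delta^{2|G|}$ instead.
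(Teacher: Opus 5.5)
Your proposal is correct and is essentially the paper's proof: Grothendieck--Lefschetz and the comparison theorem reduce the sum to $H_*(B_n,(\C R)_{\bar\zeta}^{\otimes n})$. The central element $\Delta_n^{2|G|}$ then forces vanishing unless $o'\mid n(n-1)$; by the paper's lemma it acts trivially on all of $(\C R)^{\otimes n}$, so no splitting by global monodromy is needed, and the option $m=\gcd(o,|G|)$ should be dropped. Finally, the CRT count of $2^{\omega(o')}$ residues, fed into Corollary \ref{cor: density} together with the Weil and Betti-number bounds, gives the stated exponent.

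Your parenthetical claim that a single $(N,I)$ staircase with $I+1$ counting the vanishing columns suffices is not right in general, since the non-vanishing columns need the inductive partial vanishing from the proof of Corollary \ref{cor: density}. Because you invoke that corollary directly, the argument still stands.
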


Note that if we fix $G,R$ and consider the family of sums obtained by varying $\chi$, the result implies that for any $\epsilon>0$, all but finitely many characters $\chi$ have cancellation with exponent smaller than $\frac 1 2+\epsilon$. Indeed, such an almost-all square-root cancellation statement is the best one can hope for because just like in the Möbius case, we do not expect all characters to have close to square-root cancellation. In addition to Figure \ref{fig: transpositions in S_4}, another example which seems to have smaller power-savings is $G=S_3$, $R$ the conjugacy class of transpositions and $\chi$ a sextic character, which we show in Figure \ref{fig: S_3 trans 6th root}. The vanishing slope in this example seems to be $\frac 1 3$ which corresponds to an exponent of $\frac 5 6$. As we can also see from this example, the $q$-exponent in the bound is often not optimal, and one can in theory apply Corollary \ref{cor: optimal slope cor} to get better bounds for specific characters.

\begin{figure}[h]
    \centering
\begin{tikzpicture}[x=1cm,y=1cm,scale=0.55]
\drawnaxis{9}
\drawiaxis{8}
\drawstaircase{1}{9}{0}{black!10}{0}
\drawstaircase{2}{2}{0}{red!40}{0}
\drawstaircase{5}{5}{1}{blue!40}{0}
\drawstaircase{8}{8}{2}{blue!40}{0}
\drawblock{3}{9}{0}{0}{blue!40}{0}
\drawblock{6}{9}{1}{1}{blue!40}{0}
\drawblock{9}{9}{2}{2}{blue!40}{0}
\foreach \n/\i/\txt in {
1/0/3,
3/1/\mathbf{6},
3/2/6,
4/1/6,
4/2/11,
4/3/5,
6/2/\mathbf{6},
6/3/11,
6/4/16,
6/5/11,
7/2/6,
7/3/9,
7/4/9,
7/5/36,
7/6/30,
8/4/2,
8/5/8,
8/6/34,
8/7/28,
9/3/\mathbf{6},
9/4/6,
9/5/3,
9/6/18
 }{
    \cell{\n}{\i}{\txt}
  }
\foreach \n/\i/\txt in {
9/7/153,
9/8/138
 }{
    \smallcell{\n}{\i}{\txt}
  }
\end{tikzpicture}
\caption{Homology of $(\C R)_{\zeta_6}$ for $R$ the conjugacy class of transpositions in $S_3$.}\label{fig: S_3 trans 6th root}
\end{figure}
\subsubsection{Multiple characters}\label{sec: intro multiple characters} We can also consider the more general situation where we have a product of characters. Let us first fix a choice of $G,R$. For $L\in \mathcal E^R_q(G;n_1,\ldots, n_k)$, let $f_{L,i}$ be the monic squarefree polynomial with roots that are branch points with monodromy in $R_i$. Let $\chi_i\colon \F_q^\times \rightarrow \C^\times$ for $1\le i\le k$ and $\chi_{ij}\colon \F_q^\times \rightarrow \C^\times$ for $1\le i<j\le k$ be characters, and we consider the sum of $\prod_{i} \chi_{i} (\disc(f_{L,i}))\prod_{i<j}\chi_{ij}(\res(f_{L,i},f_{L,j}))$ where $\res(f,g)$ denotes the resultant of the two polynomials $f,g$. To see why this generalizes our previous scenario, note that when $\chi_i=\chi$ and $\chi_{ij}=\chi^2$ the product of characters is simply $\chi(\disc (f_L))$. 

Just like in Theorem \ref{thm: character G extensions}, we expect that for almost all choices of $\chi_i,\chi_{ij}$ that there is good cancellation in this sum. It will again be convenient to view these characters as roots of unity, we do this by picking a generator $a$ of $\F_q^\times$ and defining $\zeta_i=\chi_i(a)$, $\zeta_{ij}^2=\chi_{ij}(a)$ (up to a sign). We choose the convention of $\zeta_{ij}^2$ because of the identity $\disc(fg)=\disc(f)\disc(g)\res(f,g)^2$. The space of characters is then parametrized by the torsion points $((\zeta_i)_i,(\zeta_{ij})_{ij})\in (\C^\times)^{k(k+1)/2}$. Now, consider the example where $n_1$ is nonzero and $n_2=\cdots=n_k=0$, here the sum simplifies to the sum in Theorem \ref{thm: character G extensions} with $\chi=\chi_1$ and $R=R_1$, and as discussed above this may not have good cancellation for some values of $\chi$. Such a set $\{\chi_1=b\}$ is a codimension one torsion coset, and one may expect other codimension one torsion cosets to be exceptions to good cancellation. Our theorem shows that we have near square-root cancellation after accounting for these exceptions.
\begin{theorem}\label{thm: multiple characters} Fix $G,R$ and let $N\ge 2$ be an integer. Then there exist finitely many equations of the form $\prod_i\zeta_i^{a_i}\prod_{i<j}\zeta_{ij}^{a_{ij}}=b_i$ where $a_i,a_{ij}$ are integers which are not all zero and $b_i\in \C^\times$ is a root of unity, such that if $((\zeta_i)_i,(\zeta_{ij})_{ij})$ is not a solution of any of these equations, then we have the bound
$$\left|\sum_{L\in \mathcal E^{R}_q(G;n_1,\ldots, n_k)} \prod_{1\le i\le k} \chi_{i} (\disc(f_{L,i}))\prod_{1\le i< j \le k}\chi_{ij}(\res(f_{L,i},f_{L,j}))\right|\le 2^{n-1}|R|^n q^{\left(\frac 1 2 +\frac 1 {N+1}\right)(n-1)+1}.$$
\end{theorem}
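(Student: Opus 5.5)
The plan is to reduce the sum to the homology of a multicolored braided vector space, prove a vanishing line of slope $\frac{N-1}{N+1}$ there, and convert it into the stated bound. The reduction follows \cite{ES26}. The sum equals, via the Grothendieck–Lefschetz trace formula, an alternating trace of Frobenius on the compactly supported cohomology of a local system on the colored configuration space $\Conf^{n_1,\ldots,n_k}(\A^1)$. Through étale-to-complex comparison, this is the homology $H_i(B_{n_1,\ldots,n_k},V)$ of the colored braid group with coefficients in
$$V=(\C R_1)_{\zeta_1}\otimes\cdots,$$
the multicolored braided vector space. Its braiding between a point of color $i$ and one of color $j$ is the rack braiding on $R$ twisted by $\zeta_{ij}$, with $\zeta_i$ twisting same-color braidings. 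Deligne's bound on weights then shows that if the homology vanishes for $i\le \frac{N-1}{N+1}n-c$, the trace is at most (total Betti number)$\cdot q^{n-\frac{1}{2}\cdot\frac{N-1}{N+1}(n-1)}$. Here the total Betti number is bounded by $2^{n-1}|R|^n$ via the bar complex dimension count. Since $1-\frac{N-1}{2(N+1)}=\frac12+\frac1{N+1}$, this gives exactly the stated exponent, with the shift $(n-1)+1$ absorbing the constant. So the task is to produce vanishing in the ``maximal'' $(N,N-2)$ staircase for the multicolored $V$, and then apply the multicolored version of Theorem \ref{thm: main extrapolating vanishing}. That version is proved by the same filtration of the bar complex indexed by multi-partitions, with $n$ the total degree.

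For the staircase, I would use the criterion from Section \ref{sec: criteria}: vanishing in an $(N,N-2)$ staircase holds once certain shuffle products in the quantum shuffle algebra $\mathfrak B(V)$ are isomorphisms for total degree $\le N$. Alternatively, I would use the central-character argument. The full twist $\Delta^2$ in $B_{n_1,\ldots,n_k}$ is central. It acts on each isotypic piece of $V^{\otimes n}$ by a scalar of the form (a root of unity from the rack part, of order dividing some $e$ depending on $G$)$\cdot\prod_i\zeta_i^{n_i(n_i-1)}\prod_{i<j}\zeta_{ij}^{2n_in_j}$. Whenever this scalar is $\neq 1$ on every piece, $\Delta^2$ acts on the coinvariant-computing complex by a nontrivial scalar while acting trivially on group homology, so $H_*(B_{n_1,\ldots,n_k},V)=0$. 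For each multidegree $(n_1,\ldots,n_k)$ with $2\le\sum n_i\le N$, the finitely many conditions ``scalar $=1$'' are exactly equations of the form $\prod\zeta_i^{a_i}\prod\zeta_{ij}^{a_{ij}}=b$ with $b$ ranging over $e$-th roots of unity. Their exponents $a_i=n_i(n_i-1)$ and $a_{ij}=2n_in_j$ are not all zero because $\sum n_i\ge 2$. Excluding these finitely many equations, we get vanishing of all homology for $2\le n\le N$. The degree-$1$ column is $H_0=V$, which sits outside the $(N,N-2)$ staircase since $i\le N-2+1-N<0$. This gives the required staircase.

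The main obstacle is making the central-element argument precise in the multicolored setting. The rack part of $V$ need not be scalar under $\Delta^2$: on $\C R^{\otimes n}$ the full twist acts by conjugation by the product of the group elements. So I would first decompose $V^{\otimes n}$ according to the value $g=x_1\cdots x_n\in G$ and the induced action. I would then pass to a power $\Delta^{2e}$, with $e$ the exponent of $G$, which acts by the pure scalar $\left(\prod\zeta_i^{n_i(n_i-1)}\prod\zeta_{ij}^{2n_in_j}\right)^{e}$. Vanishing then requires this to be $\neq1$, which again excludes finitely many torsion cosets, with $b$ now ranging over $e$-th roots of unity. I expect checking the exact formula for the twist contributions, in particular the factor $2$ in $\zeta_{ij}^{2n_in_j}$ matching the convention $\zeta_{ij}^2=\chi_{ij}(a)$, to be the fiddly step.
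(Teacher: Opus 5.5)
Your proposal is correct, but it establishes the $(N,N-2)$ staircase by a different route from the paper's proof.

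\textbf{The paper's route.} The paper uses the shuffle-product criterion, Proposition \ref{prop: vanishing from shuffle product}. In the multicolored setting, $V_{\vec\zeta}$ fails the staircase exactly when $\vec{\bar\zeta}$ is a zero of a nonzero polynomial $P_N$ built from determinants of shuffle products. Manin--Mumford for tori (Corollary \ref{cor: manin mumford}) then places the torsion zeros in finitely many proper torsion cosets. Each of these lies in a coset of the form $\{\prod\zeta_i^{a_i}\prod\zeta_{ij}^{a_{ij}}=b\}$.

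\textbf{Your route.} You use the central full twist instead. This is exactly the content of Proposition \ref{prop: explicit torsion coset}, which the paper records only as an explicit alternative. Your bookkeeping is right:
\begin{itemize}
\item $\Delta_m^{2}$ is pure and central, so it lies in every $B_{m_1,\ldots,m_k}$.
\item On the rack part it acts by conjugation by the global monodromy, so $\Delta_m^{2e}$ acts trivially there.
\item Each pair of strands crosses twice, so on the multidegree $(m_1,\ldots,m_k)$ piece the twist contributes $\prod_i\zeta_i^{m_i(m_i-1)}\prod_{i<j}\zeta_{ij}^{2m_im_j}$.
\item The exponents are not all zero once $\sum m_i\ge 2$.
\end{itemize}

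\textbf{What each approach buys.} Your argument is more elementary and fully explicit, and it avoids Manin--Mumford. It does rely on finite monodromy of $\C R$, and it demands total vanishing for every $2\le m\le N$, so your exceptional cosets can be considerably more than necessary. The paper's criterion is an equivalence, so it cuts out precisely the twists failing the $(N,N-2)$ staircase. It works for arbitrary braided vector spaces, at the cost of giving no explicit description of the cosets.

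\textbf{A simplification.} You do not need a separate multicolored version of the extrapolation theorem. By Shapiro's lemma, $H_*(B_{n_1,\ldots,n_k},W_{n_1,\ldots,n_k}\otimes\C_{\vec\zeta})$ is a direct summand of $H_*(B_n,V_{\vec\zeta}^{\otimes n})$. Here $V_{\vec\zeta}$ is the \emph{direct sum} $\bigoplus_i(\C R_i)'$ with twisted braidings, not the tensor product you wrote. Theorem \ref{thm: main extrapolating vanishing} then applies verbatim. Its explicit line (with $I=N-2$) shows nonzero homology only in degrees $j\ge\frac{N-1}{N+1}(n-1)$. This yields exactly the exponent $\left(\frac12+\frac1{N+1}\right)(n-1)+1$ you were aiming for.
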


One might notice the resemblance of the condition to the Manin-Mumford theorem for tori, and this is not a coincidence as we use the theorem in our proof. We also obtain one explicit system of equations in Proposition \ref{prop: explicit torsion coset}.

\subsection{Acknowledgments}
I would like to thank my advisor Will Sawin for providing me much valuable guidance along the way. I would also like to thank Jordan Ellenberg, Aaron Landesman, Ishan Levy, David Lin and Mark Shusterman for helpful discussions and feedback. 

I acknowledge the use of OpenAI's GPT-5.5 in generating code used for the computations of homology in this paper, this is discussed in Section \ref{sec: on homological computations}. I take full responsibility for the correctness of the outputs of any AI-generated code. The mathematical content of this paper is my own.
\section{Preliminaries}\label{sec: preliminaries}
\subsection{Braided vector spaces and shuffle algebras}
Here, we introduce the notions of a braided vector space, quantum shuffle algebra and bar-complex following \cite[Section 2]{ETW17} and \cite[Section 2,3]{KS20}.
\begin{definition}
A \textit{braided vector space} $(V,R)$ over a field $k$ consists of a finite-dimensional $k$-vector space $V$ with a braiding $R\colon V\otimes V\xrightarrow{\sim} V\otimes V$ which satisfies the Yang-Baxter equation
$$(R\otimes \id)\circ (\id \otimes R)\circ (R\otimes \id)=(\id \otimes R)\circ (R\otimes \id)\circ (\id \otimes R).$$
This relation is exactly what is required for $V^{\otimes n}$ to be a representation of $B_n$ for all $n$, where $\sigma_i$ acts on the $i$ and $i+1$-th copies of $V$ by $R$. 
\end{definition}

Throughout this paper, we will assume unless otherwise stated that our coefficient field $k=\C$. We say that a braided vector space $(V,R)$ over $\C$ is defined over a number field $K$ if there exists some braided vector space $(V_K,R_K)$ over $K$ where $V=V_K\otimes_K C$ and $R=R_K\otimes_K \C$. Often, we will refer to $(V,R)$ simply as $V$.

\begin{example}\label{eg: BVS}
We give the two examples of braided vector spaces $(V,R)$ that will be relevant for us.
\begin{enumerate}[(\alph*)]
    \item Braided vector spaces of diagonal type. Let $\{v_i\}_i$ be a basis of $V$, and $(q_{ij})$ be a matrix with entries in $\C^\times$. We define the braiding on the basis $v_i\otimes v_j$ by $R(v_i\otimes v_j)=q_{ij}v_j\otimes v_i$.
    \item Braided vector spaces of rack type. A \textit{rack} is a set $S$ with binary operation $(a,b)\mapsto b^a$ satisfying (i) $(c^a)^{b^a}=(c^b)^a$ and (ii) for each $a,b\in S$, there is a unique $c\in S$ with $c^a=b$ which we denote as $c={}^ab$. We define the braided vector space $\C S$ to have basis $S$ and braiding $R(a\otimes b)=b\otimes a^b$. For example, if $S$ is a union of conjugacy classes in a group $G$, then it is a rack with binary operation $g^h=h^{-1}gh$.
 \end{enumerate}
 
One can also twist a braided vector space $(V,R)$ by some $\zeta\in \C^\times$ to get $(V_\zeta,R_\zeta)$ where $R_\zeta(a\otimes b)=\zeta\cdot R(a\otimes b)$. One usually takes $\zeta$ to be a root of unity.
\end{example}

To each braided vector space $V$, we can associate a quantum shuffle algebra $\mathfrak A(V)$. Before that, we define the braiding $R_\sigma\colon V^{\otimes n}\xrightarrow \sim V^{\otimes n}$ attached to a permutation $\sigma \in S_n$ as follows. Given such a $\sigma$, there exists a Matsumoto lift $\tilde \sigma\in B_n$ given by expressing $\sigma$ as the minimum length (reduced) word in transpositions $s_i=(i,i+1)$ and replacing these with the braid $\sigma_i$. This lift is well-defined because any two reduced words are related by $s_is_{i+1}s_i=s_{i+1}s_is_{i+1}$ or $s_is_j=s_js_i$ for $|i-j|>1$ which are exactly the braid relations. Then, the braiding $R_\sigma$ is given by the action of $\tilde \sigma$ on the $B_n$-representation $V^{\otimes n}$.

\begin{definition}
For a braided vector space $V$, the quantum shuffle algebra $$\mathfrak A(V)=\bigoplus_{n=0}^\infty V^{\otimes n}$$ 
is the graded bialgebra with free comultiplication, where components $\Delta_{p,q}\colon V^{\otimes p+q}\xrightarrow\sim V^{\otimes p}\otimes V^{\otimes q}$ are given by the identity map, and multiplication $\tau$ with components $\tau_{p,q}$ given by the shuffle product
$$\tau_{p,q}(v,w) = \sum_{\sigma\in \text{Sh}(p,q)} R_\sigma (v\otimes w).$$
Here, we define the set of $(p,q)$-shuffles to be
$$\text{Sh}(p,q)\coloneqq\{\sigma\in S_{p+q} \colon \sigma(1)<\cdots <\sigma(p) \text{ and }\sigma(p+1)<\cdots<\sigma(p+q)\}$$
which are the permutations that preserve the order of the first $p$ elements and last $q$ elements.
\end{definition}

Now, we define the $n$-th bar-cube of $\mathfrak A=\mathfrak A(V)$ to be the hypercube diagram where vertices are labeled by the $2^{n-1}$ ordered partitions $[n]$, with the vertex $\lambda=(\lambda_1,\cdots ,\lambda_k)$ having the term $\mathfrak A_{\lambda_1}\otimes \cdots \otimes \mathfrak A_{\lambda_k}$, where the edges are given by the shuffle products. For example, for $n=3$ we have
% https://q.uiver.app/#q=WzAsNCxbMCwwLCJcXG1hdGhmcmFrIEFfMVxcb3RpbWVzIFxcbWF0aGZyYWsgQV8xXFxvdGltZXMgXFxtYXRoZnJhayBBXzEiXSxbMSwwLCJcXG1hdGhmcmFrIEFfMVxcb3RpbWVzIFxcbWF0aGZyYWsgQV8yIl0sWzAsMSwiXFxtYXRoZnJhayBBXzJcXG90aW1lcyBcXG1hdGhmcmFrIEFfMSJdLFsxLDEsIlxcbWF0aGZyYWsgQV8zIl0sWzAsMSwiXFxpZCBcXG90aW1lcyBcXHRhdV97MSwxfSJdLFswLDIsIlxcdGF1X3sxLDF9XFxvdGltZXMgXFxpZCIsMl0sWzIsMywiXFx0YXVfezIsMX0iLDJdLFsxLDMsIlxcdGF1X3sxLDJ9Il1d
\[\begin{tikzcd}[column sep=large]
	{\mathfrak A_1\otimes \mathfrak A_1\otimes \mathfrak A_1} & {\mathfrak A_1\otimes \mathfrak A_2} \\
	{\mathfrak A_2\otimes \mathfrak A_1} & {\mathfrak A_3.}
	\arrow["{\id \otimes \tau_{1,1}}", from=1-1, to=1-2]
	\arrow["{\tau_{1,1}\otimes \id}"', from=1-1, to=2-1]
	\arrow["{\tau_{1,2}}", from=1-2, to=2-2]
	\arrow["{\tau_{2,1}}"', from=2-1, to=2-2]
\end{tikzcd}\]
From the $n$-th bar-cube, we can form the $n$-th bar-complex 
$$B_n(\mathfrak A)=\left\{\mathfrak A_1^{\otimes n}\rightarrow \cdots \rightarrow \bigoplus_{\substack{p+q+r=n\\
0<p,q,r<n}} \mathfrak A_p\otimes \mathfrak A_q \otimes \mathfrak A_r\rightarrow \bigoplus_{\substack{p+q=n\\
0<p,q<n}} \mathfrak A_p\otimes \mathfrak A_q \rightarrow \mathfrak A_n\right\}$$
by collapsing the maps in the bar-cube, and adding a Koszul sign twist so that the differentials of the complex $d$ satisfy $d\circ d=0$. The grading of this complex is normalized so that $\mathfrak A_1^{\otimes n}$ is in degree $-n$ and $\mathfrak A_n$ is in degree $-1$.

The point of introducing quantum shuffle algebras and bar-complexes is to give a way to compute the homology of braided vector spaces:
\begin{equation}\label{eqn: homology to bar-complex}
H_i(B_n,V^{\otimes n})^* \cong H^i(B_n,V^{*\otimes n})\cong \Tor^{\mathfrak A(V^*_{-1})}_{n-i,n}(\C,\C)\cong H^{i-n}(B_n(\mathfrak A(V^*_{-1}))). 
\end{equation}
The asterisk here refers to the dual of the corresponding vector space, and $V_{-1}$ is $V$ with braiding twisted by a sign as previously defined. The first isomorphism follows from a universal coefficient theorem, the second isomorphism follows from \cite[Corollary 3.3.4]{KS20} or the dual of \cite[Theorem 1.3]{ETW17}, and the last isomorphism follows is a general fact that $\Tor$ can be calculated from the bar-complex. Hence, to show vanishing of $H_i(B_n,V^{\otimes n})$, it suffices to show that the cohomology of the corresponding bar-complex $\mathfrak A(V_{-1}^*)$ vanishes in degree $i-n$. 

\subsection{Local systems on $\Conf^n$} We define $\Conf^n$ to be the configuration space of $n$ distinct unordered points on the affine line $\A^1$ over $\Z$. By sending the $n$ points to the monic polynomial $f$ with these $n$ roots, we can identify this with the open subset of $\A^n$ where the discriminant is invertible. Then, the discriminant gives a morphism $\delta\colon \Conf^n\rightarrow \G_m$. 

The analytification $\Conf^n(\C)$ of the fiber over $\C$ is the Eilenberg-MacLane space $K(B_n,1)$. By choosing the basepoint $c_n=\{1,2,\ldots, n\}\in \Conf^n(\C)$, we have the isomorphism $\pi_1(\Conf^n(\C),c_n)\cong B_n$, where the generator $\sigma_i$ of the braid group corresponds to the half Dehn twist swapping the points $i$ and $i+1$ by moving them counterclockwise around each other. 

For non-negative integers $n_1+\ldots +n_k=n$, we define $\Conf^{n_1,\ldots, n_k}$ to be the multicolored configuration space of $n$ distinct points on the affine line with $n_i$ points of each color. We can view this as a $k$-tuple of monic polyomials $(f_1,\ldots, f_k)$ where $f_i$ has degree $n_i$. Now, we have $\pi_1(\Conf^{n_1,\ldots, n_k}(\C),c_n)\cong B_{n_1,\ldots, n_k}$, which is defined to be the subgroup of $B_n$ which preserves the coloring, i.e. the preimage of $S_{n_1}\times \cdots \times S_{n_k}$ under the map $B_n\twoheadrightarrow S_n$. Forgetting the coloring gives a finite étale map $\tau\colon \Conf^{n_1,\ldots n_k}\rightarrow \Conf^n$. As a special case, define the ordered configuration space $\PConf^n=\Conf^{1,\ldots, 1}$ where there are $n$ $1$s, then $\tau\colon \PConf^n\rightarrow \Conf^n$ is a finite étale $S_n$-cover.

In the following examples, we recall the construction of three types of local systems over $\Conf^n$ that were given in \cite[Section 3]{ES26}, together with their trace functions over $\F_q$ and their analytifications over $\C$ as $B_n$-representations. Let $l$ be any prime coprime to $q$ and fix an isomorphism $\bar \Q_l\cong\C$ which we will implicitly use throughout the paper.
\begin{example}[Local systems from representations of $S_n$]\label{eg: local system rep S_n}
Let $\rho$ be a finite-dimensional representation of $S_n$ over $\bar \Q_l$. We can view this as a local system $\mathcal L_\rho$ over $\Conf^n$ via the homomorphism $h\colon \pi_1^{et}(\Conf^n)\rightarrow S_n$ that corresponds to the cover $\tau\colon \PConf^n\rightarrow \Conf^n$. Over $\C$, the local system $\mathcal L_\rho$ corresponds to the $B_n$-representation given by the composition of $B_n\twoheadrightarrow S_n$ and $\rho$.

Let $f\in \Conf^n(\F_q)$ be a point viewed as a monic polynomial, and let $\sigma_f\in S_n$ be the permutation given by the action of $\Frob_q$ on the roots of $f$. Then, $h$ sends $\Frob_{q,f}$ to $\sigma_f$, so the trace function is $$\tr(\Frob_q,(\mathcal L_{\rho})_{\bar f})=\tr(\rho(\sigma_f))$$ where $\bar f$ is a geometric point over $f$.
\end{example}
\begin{example}[Character Sheaves] \label{eg: local system character sheaves} Let $\chi\colon \F_q^\times \rightarrow \C^\times$ be a multiplicative character. Recall that the Kummer sheaf $\mathcal L_\chi$ on $\G_m$ over $\F_q$ can be constructed as follows. Consider the covering $[q-1]\colon \G_m\rightarrow \G_m$ given by raising to the power of $q-1$, this is a Galois cover with Galois group $\F_q^{\times}$ so it gives a map $\pi_1^{et}(\G_m/\F_q)\rightarrow \F_q^\times$. Composing with the character $\chi$ gives the Kummer sheaf $\mathcal L_\chi$ with trace function $\tr(\Frob_q,(\mathcal L_{\chi})_{\bar x})=\chi(x)$ for $x\in \G_m(\F_q)$.

We can pull this back via the discriminant map to get $\delta^*\mathcal L_\chi$ which has trace function
$$\tr(\Frob_q,(\delta^*\mathcal L_\chi)_{\bar f})=\chi(\disc(f))$$
for $f\in \Conf^n(\F_q)$.

By choosing a map $\Spec \F_q \rightarrow \Spec \Z_p[\zeta_{q-1}]$, where we write $q=p^e$, one can use the same construction above to ``spread-out'' the Kummer sheaf to $\G_m$ over $\Z_p[\zeta_{q-1}]$ so that the pullback to $\Spec \F_q$ is $\mathcal L_\chi$; the point is that the covering $[q-1]\colon \G_m\rightarrow \G_m$ over $\Z_p[\zeta_{q-1}]$ is still a finite étale Galois cover so the construction works as stated. Then, the analytification of $\delta^*\mathcal L_\chi$ corresponds to the $B_n$-representation where each generator $\sigma_i$ is sent to $\zeta$, where $\zeta\in \C^\times$ is a root of unity of the same order as $\chi$.

We now discuss the generalization to multicolored configuration space. As in the introduction, let $\chi_i\colon \F_q^\times \rightarrow \C^\times$ for $1\le i\le k$ and $\chi_{ij}\colon \F_q^\times \rightarrow \C^\times$ for $1\le i < j\le k$ be characters, and define the roots of unity $\zeta_i=\chi_i(a)$, $\zeta_{ij}^2=\chi_{ij}(a)$ for a generator $a$ of $\F_q^\times$. Recall that we write a point of $\Conf^{n_1,\ldots, n_k}(\F_q)$ as $x=(f_1,\ldots, f_k)$, and consider the tensor product of pullbacks of Kummer sheaves
\begin{equation}\label{eqn: multiple kummer}
\mathcal L_{\vec\chi}=\left(\bigotimes_{1\le i\le k} \disc(f_i)^*\mathcal L_{\chi_i} \right)\otimes \left(\bigotimes_{1\le i < j \le k} \res(f_i,f_j)^*\mathcal L_{\chi_{ij}} \right), 
\end{equation}
which has trace function
$$\tr(\Frob_q,\mathcal L_{\bar x})=\prod_{1\le i \le k} \chi_i(\disc(f_i))  \prod_{1\le i<j\le k} \chi_{ij}(\res(f_i,f_j)).$$

Just like before, we can spread out the sheaf $\mathcal L$, and its analytification would correspond to the $B_{n_1,\ldots ,n_k}$-representation where a positive half-twist of two adjacent strands of the same color $i$ acts by $\zeta_i$ while every positive full twist of a strand of color $i$ and a strand of color $j$ acts by $\zeta_{ij}^2$.
\end{example}
\begin{example}[Pushforward from Hurwitz spaces]\label{eg: local system hurwitz space} We first discuss the construction of Hurwitz spaces. Suppose that $q$ is coprime to $|G|$ and the union of conjugacy class $R\subseteq G$ is closed under $q$-th powering ($x\in R\Rightarrow x^q\in R$). Then, from \cite[Section 2.1]{landesman2025cohenlenstramomentsfunctionfields}, we can define the pointed Hurwitz scheme $\Hur^{n}_{G,R}$ over $\Z_p[\zeta_{q-1}]$ which roughly parametrizes $G$ covers with local inertia in $R$ together with a point marked over infinity (if $\infty$ is not split, a root-stack construction is used). The analytification over $\C$ parametrizes $n$-branched $G$-covers of a disk $D$ that have a marked point on the boundary of the cover with local monodromy around each branch point in $R$, see \cite[Section 2]{EVW16} for more details.

Now, write $R=R_1\sqcup \cdots\sqcup R_k$ where each $R_i$ is a union of conjugacy classes, each closed under $q$-th powering. The geometric components of $\Hur^n_{G,R}$ are indexed by Hurwitz orbits, which are the $B_n$-orbits on $R^n$ with braiding on consecutive elements defined by sending $g,h$ to $h,g^h$ like in Example \ref{eg: BVS}(b). Over the special fiber $\F_q$, Frobenius acts on these components (viewed as orbits in $R^n$) by sending $(g_1,\ldots, g_n)$ to $(g_1^q,\ldots, g_n^q)$. Since each $R_i$ is closed under $q$-th powering, every Galois orbit of geometric component has the same number of branch points with local monodromy in each $R_i$. Let $\Hur^{n_1,\ldots, n_k}_{G,R}$ be the subscheme of $\Hur^n_{G,R}$ which is the union of geometric components with $n_i$ branch points of local monodromy in $R_i$, and this is defined over $\Z_p[\zeta_{q-1}]$ by the argument above. For our purposes, we can also consider the open and closed subscheme $\Hur^{n_1,\ldots, n_k}_{G,R,\infty}$ where the cover is split over $\infty$. We then have
$$\# \Hur^{n_1,\ldots, n_k}_{G,R,\infty}(\F_q)= \# \mathcal E^R_q(G;n_1,\ldots, n_k).$$
We also remark that one can also use the construction of Hurwitz spaces given in \cite{Liu_Wood_Zureick-Brown_2024}.

There is a finite étale map $\pi \colon \Hur^{n_1,\ldots, n_k}_{G,R,\infty}\rightarrow \Conf^{n_1,\ldots ,n_k}$ that sends a $G$-cover to its branch points where a branch point with local mondoromy $R_i$ is colored with color $i$. On $\F_q$-points, it sends a $G$-cover $L\in \mathcal E^R_q(G;n_1,\ldots, n_k)$ to $(f_{L,1},\ldots, f_{L,k})$ as defined in the introduction. Now, we consider the local system $\mathcal L_{G,R}=\pi_*\bar\Q_l$ which is the pushforward of the constant sheaf on Hurwitz space. By the previous discussion and properties of pushforwards, the trace of Frobenius on $x=(f_1,\ldots, f_k)\in \Conf^{n_1,\ldots, n_k}(\F_q)$ is given by $$\tr(\Frob_q,(\mathcal L_{G,R})_{\overline x})= \# \{L\in \mathcal E^R_q(G;n_1,\ldots, n_k)\colon f_{L,i}=f_i \ \forall 1\le i\le k\}.$$

Furthermore, properties of topological Hurwitz spaces imply that the analytification of the local system $\mathcal L_{G,R}$, when viewed as a representation of $B_{n_1,\ldots, n_k}$, is a direct summand of the representation $(\C R_1)^{\otimes n_1}\otimes \cdots \otimes (\C R_k)^{\otimes n_k}\subseteq (\C R)^n$. 

Alternatively, we can consider the map $\pi'\colon \Hur^{n_1,\ldots, n_k}_{G,R,\infty}\rightarrow \Conf^{n}$ with the corresponding pushforward $\mathcal L'_{G,R}=\pi_*'\bar \Q_l$ which has trace
$$\tr(\Frob_q,(\mathcal L_{G,R}')_{\overline f})=\# \{L\in \mathcal E^R_q(G;n_1,\ldots, n_k)\colon f_L=f\}$$
and its analytification as a $B_n$-representation is a direct summand of $$\Ind_{B_{n_1,\ldots, n_k}}^{B_n}\left((\C R_1)^{\otimes n_1}\otimes \cdots \otimes (\C R_k)^{\otimes n_k}\right)\subseteq (\C R)^{\otimes n}.$$ This induced representation is the direct summand of all tensor products which contain the factor $(\C R_i)$ exactly $n_i$ times.
\end{example}

\subsection{Arithmetic sums from trace functions}\label{sec: arith sums from trace fns} Now, we discuss how to obtain the desired arithmetic sums in Sections \ref{sec: intro patterson} and \ref{sec: intro character sums G-extensions} from the trace functions of local systems that we discussed previously.

The arithmetic sums in Section \ref{sec: intro character sums G-extensions} can be obtained by taking the tensor product of the character sheaves constructed in Example \ref{eg: local system character sheaves} together with the pushforward of the constant sheaf from Hurwitz spaces constructed in Example \ref{eg: local system hurwitz space}. This is because the trace function of the tensor product of two sheaves is the product of the individual trace functions. 

On the other hand, obtaining the bias in Gauss sums as given in Section \ref{sec: intro patterson} is more complicated. The corresponding trace function is the product of the indicator function for irreducible (prime) polynomials $\mathbf 1_{\text{irr}} (f)$, and the Gauss sum $G_\chi(f)$. We deal with each one as follows.

Firstly, since $f\in \Conf^n(\F_q)$ is squarefree, \cite[Lemma 3.6]{Sawin_2021_sqroot} tells us that
\begin{equation}\label{eqn: formula for 1_irr}
\mathbf 1_{\text{irr}} (f) = \frac 1 n \sum_{k=0}^{n-1} (-1)^k \tr(\Frob_q,(\mathcal L_{\wedge^k \text{std}_n})_{\bar f})  
\end{equation}
where $\mathcal L_{\wedge^k \text{std}_n}$ are the local systems constructed in Example \ref{eg: local system rep S_n} corresponding to the $k$-th wedge product of the standard representation of $S_n$. 

These representations are direct summands of $V^{\otimes n}$ for a certain braided vector space $V=\C_\wedge$. Following \cite[Example 2.0.5]{ES26}, we define $\C_\wedge$ to be the $2$-dimensional braided vector space of diagonal type with basis $\{v_0,v_1\}$ corresponding to the matrix $(q_{ij})=(-1)^{ij}$ as in Example \ref{eg: BVS}(a). Explicitly, the braiding is
$$R(v_0\otimes v_0)=v_0\otimes v_0, \ R(v_0\otimes v_1)=v_1\otimes v_0,\ R(v_1\otimes v_0)=v_0\otimes v_1,\ R(v_1\otimes v_1)=-v_1\otimes v_1.$$
By \cite[Corollary 5.4.6]{ES26}, we can decompose the $S_n$-representation
\begin{equation}\label{eqn: C_wedge decomp}
\C_\wedge^{\otimes n} = \bigoplus_{k=0}^{n-1} (\wedge^k \text{std}_n)^{\oplus 2},
\end{equation}
so it suffices to look at the braided vector space $\C_\wedge$ to understand $\mathcal L_{\wedge^k \text{std}_n}$ and $\mathbf{1}_{\text{irr}}$.

Secondly, we deal with the Gauss sums $G_\chi(f)$ for a nontrivial multiplicative character $\chi\colon \F_q^\times \rightarrow \C^\times$, following \cite[Section 2]{Sawin_2024}. Recall that we defined the additive character $\psi(x)=e^{2\pi i\tr^{\F_p}_{\F_q}x/p}$. Let $G(\chi,\psi)=\sum_{x\in \F_q^\times} \chi(x)\psi(x)$, which has absolute value $\sqrt q$ by a property of Gauss sums. Then, taking $f_1=1$ in \cite[Lemma 2.4]{Sawin_2024} and combining with \cite[Lemma 2.1, 2.3]{Sawin_2024} gives for squarefree $f$ with degree $n$ that
\begin{equation}\label{eqn: formula for G_chi(f)}
G_\chi(f)=(-1)^{n(n-1)(q-1)/4} G(\chi,\psi)^n (\chi\cdot \xi)\left((-1)^{n(n-1)/2}\disc(f)\right)
\end{equation}
where $\xi\colon \F_q^\times \rightarrow \{\pm 1\}$ is the unique quadratic character and $(\chi\cdot\xi)(x)=\chi(x)\xi(x)$. We see that for fixed $\chi$, the Gauss sum is basically the character $(\chi\cdot \xi)(\disc (f))$ up to a factor with size $q^{\deg f/2}$, so we can understand this using the local system $\delta^* \mathcal L_{\chi\cdot \xi}$ in Example \ref{eg: local system character sheaves}.

At this point, we could basically deduce the arithmetic bounds in Sections \ref{sec: intro patterson} and \ref{sec: intro character sums G-extensions} assuming the corresponding homological vanishing statements. However, we choose to leave this to Section \ref{sec: arithmetic applications} after we have proven the homological vanishing theorems in the next few sections.
\section{Proof of main theorem}\label{sec: proof of main theorem}
In this section we prove Theorem \ref{thm: main extrapolating vanishing}, our main homological vanishing result. The rough idea of the proof is as follows. Suppose we have vanishing for $H_i(B_n,V^{\otimes n})$ in an $(N,I)$ staircase. Using Equation \eqref{eqn: homology to bar-complex}, this translates to $H^i(B_n(\mathfrak A))$ vanishing for all $n\le N$ and $i\le I-N$, where we let $\mathfrak A=\mathfrak A(V^*_{-1})$ be the relevant quantum shuffle algebra.

Then, for any $n>N$, we construct a filtration on the bar-complex $B_n(\mathfrak A)$. We do this in a combinatorial way -- terms in the bar-complex are indexed by partitions, and we determine which filtered piece a term is in based on its corresponding partition. In order to show that the cohomology of the bar-complex vanishes in low degree, it is enough, by a spectral sequence argument, to show this for each graded piece. By our construction, each graded piece of this filtration will be a tensor product of terms like $\mathfrak A_i$ and $B_m(\mathfrak A)$ where $m\le N$. However, by assumption, we know that $B_m(\mathfrak A)$ vanishes in low degree, and this implies the vanishing of low degree cohomology of each graded piece which completes the proof.

The novelty of our proof lies within the combinatorial construction of the filtration, and to the best of our knowledge nothing similar has been done previously at least in the setting of the bar-complexes.

\subsection{A simple example}\label{sec: an example} To illustrate the idea behind our proof, we look at the simplest example of our theorem, which is when $(N,I)=(2,0)$ and $n=5$. As mentioned in the introduction (see Corollary \ref{cor: es main theorem}), the $I=0$ case is basically the same as \cite[Theorem 1.1.1]{ES26}, so this can also serve as a point of comparison between both proofs. Their proof, as written in \cite[Theorem 2.0.11]{ES26}, uses a spectral sequence argument inspired by \cite{EVW16} which comes from homological algebra.

We now outline our proof in this case. The assumption that $H_i(B_n,V^{\otimes n})$ vanishes in the $(2,0)$ staircase corresponds to the condition that $H_0(B_2,V^{\otimes 2})=0$ or equivalently that $H^{-2}(B_2(\mathfrak A))=0$. In this simple case, since $B_2(\mathfrak A)$ is the two term complex $\mathfrak A_1\otimes \mathfrak A_1\rightarrow \mathfrak A_2$ and both terms have the same dimension, we must also have that $H^{-1}(B_2(\mathfrak A))=0$. However, we will \textit{avoid} using this fact as it will not be available to us in the general case.

Theorem \ref{thm: main extrapolating vanishing} then claims for $n=5$ that $H_0(B_5,V^{\otimes 5})=H_1(B_5,V^{\otimes 5})=0$, so we need to show that $H^{-5}(B_5(\mathfrak A))=H^{-4}(B_5(\mathfrak A))=0$. We draw the $5$-th bar-cube in Figure \ref{fig:bar-cube}, where for convenience, we write the partition $(\lambda_1,\ldots ,\lambda_k)$ for the term $\mathfrak A_{\lambda_1}\otimes \cdots \otimes \mathfrak A_{\lambda_k}$. 

\begin{figure}[h]
    \centering
% https://q.uiver.app/#q=WzAsMTYsWzAsMSwiMTExMTEiLFsyNDAsNjAsNjAsMV1dLFsyLDEsIjIxMTEiLFsyNDAsNjAsNjAsMV1dLFswLDMsIjEyMTEiLFsxMjAsNjAsNjAsMV1dLFsyLDMsIjMxMSJdLFszLDIsIjQxIl0sWzEsMiwiMTMxIl0sWzEsMCwiMTEyMSIsWzAsNjAsNjAsMV1dLFszLDAsIjIyMSIsWzAsNjAsNjAsMV1dLFs0LDIsIjExMTIiLFsyNDAsNjAsNjAsMV1dLFs0LDQsIjEyMiIsWzEyMCw2MCw2MCwxXV0sWzYsNCwiMzIiXSxbNSwzLCIxNCJdLFs3LDMsIjUiXSxbNSwxLCIxMTMiXSxbNywxLCIyMyJdLFs2LDIsIjIxMiIsWzI0MCw2MCw2MCwxXV0sWzAsMSwiIiwwLHsiY29sb3VyIjpbMjQwLDYwLDYwXX1dLFswLDJdLFsyLDNdLFswLDZdLFsxLDddLFs2LDcsIiIsMix7ImNvbG91ciI6WzAsNjAsNjBdfV0sWzUsNF0sWzMsNF0sWzIsNV0sWzYsNV0sWzcsNF0sWzEsM10sWzgsOV0sWzE1LDEwXSxbMTMsMTFdLFsxNCwxMl0sWzksMTBdLFsxMSwxMl0sWzEzLDE0XSxbOCwxNSwiIiwxLHsiY29sb3VyIjpbMjQwLDYwLDYwXX1dLFs4LDEzXSxbMTUsMTRdLFsxMCwxMl0sWzksMTFdLFsyLDksIiIsMSx7ImNvbG91ciI6WzEyMCw2MCw2MF19XSxbMywxMF0sWzUsMTFdLFs0LDEyXSxbMSwxNSwiIiwwLHsiY29sb3VyIjpbMjQwLDYwLDYwXX1dLFs2LDEzXSxbMCw4LCIiLDAseyJjb2xvdXIiOlsyNDAsNjAsNjBdfV0sWzcsMTRdXQ==
\[\begin{tikzcd}
	& \textcolor{rgb,255:red,214;green,92;blue,92}{1121} && \textcolor{rgb,255:red,214;green,92;blue,92}{221} &&&& \\
	\textcolor{rgb,255:red,92;green,92;blue,214}{11111} && \textcolor{rgb,255:red,92;green,92;blue,214}{2111} &&& 113 && 23 \\
	& 131 && 41 & \textcolor{rgb,255:red,92;green,92;blue,214}{1112} && \textcolor{rgb,255:red,92;green,92;blue,214}{212} \\
	\textcolor{rgb,255:red,92;green,214;blue,92}{1211} && 311 &&& 14 && 5 \\
	&&&& \textcolor{rgb,255:red,92;green,214;blue,92}{122} && 32
	\arrow[color={rgb,255:red,214;green,92;blue,92}, from=1-2, to=1-4]
	\arrow[from=1-2, to=2-6]
	\arrow[from=1-2, to=3-2]
	\arrow[from=1-4, to=2-8]
	\arrow[from=1-4, to=3-4]
	\arrow[from=2-1, to=1-2]
	\arrow[draw={rgb,255:red,92;green,92;blue,214}, from=2-1, to=2-3]
	\arrow[color={rgb,255:red,92;green,92;blue,214}, from=2-1, to=3-5]
	\arrow[from=2-1, to=4-1]
	\arrow[from=2-3, to=1-4]
	\arrow[color={rgb,255:red,92;green,92;blue,214}, from=2-3, to=3-7]
	\arrow[from=2-3, to=4-3]
	\arrow[from=2-6, to=2-8]
	\arrow[from=2-6, to=4-6]
	\arrow[from=2-8, to=4-8]
	\arrow[from=3-2, to=3-4]
	\arrow[from=3-2, to=4-6]
	\arrow[from=3-4, to=4-8]
	\arrow[from=3-5, to=2-6]
	\arrow[draw={rgb,255:red,92;green,92;blue,214}, from=3-5, to=3-7]
	\arrow[from=3-5, to=5-5]
	\arrow[from=3-7, to=2-8]
	\arrow[from=3-7, to=5-7]
	\arrow[from=4-1, to=3-2]
	\arrow[from=4-1, to=4-3]
	\arrow[color={rgb,255:red,92;green,214;blue,92}, from=4-1, to=5-5]
	\arrow[from=4-3, to=3-4]
	\arrow[from=4-3, to=5-7]
	\arrow[from=4-6, to=4-8]
	\arrow[from=5-5, to=4-6]
	\arrow[from=5-5, to=5-7]
	\arrow[from=5-7, to=4-8]
\end{tikzcd}\]
    \caption{$5$-th bar-cube with coloring for different filtration gradings.}
    \label{fig:bar-cube}
\end{figure}
We consider a four-step filtration on $B=B_5(\mathfrak A)$ given by
$$0=F_0B\subseteq F_1B\subseteq F_2B \subseteq F_3B\subseteq F_4B=B$$
where $F_1B$ consists of all the black terms, $F_2B$ the red and black terms, and finally $F_3B$ the red, green and blue terms. By construction, the four graded pieces consists of the blue, green, red and black terms respectively. 

Recall that it suffices to show that each graded piece has cohomology only in degree $\ge -3$. Indeed, from the spectral sequence associated to this filtration
$$E_1^{p,q}=H^{p+q}(\Gr_pB)\Rightarrow H^{p+q}(B),$$
this would tell us that the $E^1$-page only has terms in degree $\ge -3$, so the abutment only has cohomology in degree $\ge -3$. 

This is immediately true for the black graded piece as it only has terms concentrated in degree $\ge -3$. The red graded piece is of the form
$B_2(\mathfrak A)\otimes \mathfrak A_2\otimes \mathfrak A_1$ which has terms in degree $-4$ to $-3$. The cohomology of this is simply $H^*(B_2(\mathfrak A))\otimes \mathfrak A_2 \otimes \mathfrak A_1$, where we normalize $\mathfrak A_i$ to be in degree $-1$ for each $i$. Our assumption tells us that $H^{-2}(B_2(\mathfrak A))=0$, so $H^{-4}(B_2(\mathfrak A)\otimes \mathfrak A_2\otimes \mathfrak A_1)=0$. Thus, the cohomology of the red graded piece is supported in degree $\ge -3$. Likewise, the green graded piece is $\mathfrak A_1\otimes \mathfrak A_2 \otimes B_2(\mathfrak A)$ and by the same argument has cohomology in degree $\ge -3$. Lastly, the blue graded piece is of the form $B_2(\mathfrak A)\otimes \mathfrak A_1\otimes B_2(\mathfrak A)$ whose cohomology is $H^*(B_2(\mathfrak A))\otimes \mathfrak A_1\otimes H^*(B_2(\mathfrak A))$, the first and last terms have cohomology in degree $\ge -1$ by assumption and the middle term has degree $-1$, which verifies the claim for the blue graded piece as well. This completes the argument for $n=5$.

We remark that the filtration described in this section will differ slightly from the one we construct in the next section, since here we have simplified it as much as possible.
\subsection{Multi-block filtration} For the general case, we will need a more complicated filtration which generalizes the one in the previous section. We call this the multi-block filtration because the construction involves breaking $n$ up into blocks of $N+1$, and this is where the quasi-periodicity in Theorem \ref{thm: main extrapolating vanishing} comes from.
\begin{proposition}\label{prop: multi-block filtration}
Let $N\ge 2$ and $n>N$ be integers, and we denote $a=\lfloor \frac{n}{N+1}\rfloor$ and $b=\{ \frac{n}{N+1}\}$. Then, there is a filtration on $B_n(\mathfrak A)$ such that each graded piece is a direct sum of complexes of the form
\begin{equation}\label{eqn: tensor product B_i, A_j}
B_{i_1}(\mathfrak A)\otimes \mathfrak A_{j_1}\otimes B_{i_2}(\mathfrak A)\otimes \mathfrak A_{j_2}\otimes \cdots \otimes \mathfrak A_{j_k}\otimes B_{i_{k+1}}(\mathfrak A),
\end{equation}
where $k\le a$ and $0\le i_l\le N$ for each $1\le l\le k+1$. Furthermore, if $k=a$, then we also have the condition that $i_{a+1}\le b$. Here, $B_0(\mathfrak A)$ denotes the empty tensor product.
\end{proposition}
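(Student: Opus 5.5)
The plan is to build the filtration by attaching to each term of $B_n(\mathfrak A)$ a combinatorial ``type'' through a greedy left-to-right scan. The terms of $B_n(\mathfrak A)$ are indexed by ordered partitions (compositions) $\lambda$ of $n$. Equivalently, they are indexed by the set $S\subseteq\{1,\ldots,n-1\}$ of cut positions, where the points $1,\ldots,n$ are grouped into the consecutive parts of $\lambda$. The differential is a signed sum of maps that each remove one cut, i.e. merge two adjacent parts by a shuffle product. So a subset of compositions closed under removing cuts spans a subcomplex, and I will build the filtration from such subsets.

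\textbf{Step 1: the type of a composition.} Given $\lambda$, run the following scan.
\begin{itemize}
\item Set $p_0=0$.
\item At stage $l$, if at least $N+1$ points remain after $p_{l-1}$, let $[s_l,e_l]$ be the part of $\lambda$ containing the point $p_{l-1}+N+1$. Declare a free region $[p_{l-1}+1,s_l-1]$ of length $i_l=s_l-1-p_{l-1}\le N$, and a fixed part $\mathfrak A_{j_l}$ with $j_l=e_l-s_l+1$. Then set $p_l=e_l$.
\item If fewer than $N+1$ points remain, the leftover is the final free region, of length $i_{k+1}\le N$.
\end{itemize}
The type of $\lambda$ is the data $T(\lambda)=(s_1,e_1,\ldots,s_k,e_k)$.

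Each stage consumes $e_l-p_{l-1}\ge N+1$ points, so $k\le a$. If $k=a$, the leftover has at most $n-a(N+1)=b$ points, so $i_{a+1}\le b$. These are exactly the numerical constraints in the proposition.

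\textbf{Step 2: each fiber of $T$ is a tensor product.} Fix a type $T$. The compositions with $T(\lambda)=T$ are exactly those that
\begin{itemize}
\item have cuts at $s_l-1$ and at $e_l$,
\item have no cuts inside $[s_l,e_l]$,
\item have arbitrary cuts strictly inside each free region.
\end{itemize}
To see this, note that changing cuts inside a free region lying before $s_l$ does not change which part contains $p_{l-1}+N+1$, so the scan reproduces the same $(s_l,e_l)$; the claim follows by induction on $l$. Restricting the differential to one fiber only removes cuts inside free regions. So, as a complex, with the Koszul signs matching those of the tensor product, the fiber is
\[
B_{i_1}(\mathfrak A)\otimes \mathfrak A_{j_1}\otimes\cdots\otimes \mathfrak A_{j_k}\otimes B_{i_{k+1}}(\mathfrak A).
\]
Here a region of length $0$ gives the empty factor $B_0(\mathfrak A)$.

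\textbf{Step 3: monotonicity, the main obstacle.} I need a total order on types such that removing a cut either keeps the type or strictly increases it. I will use the lexicographic order on the key
\[
\kappa(T)=(e_1,-s_1,e_2,-s_2,\ldots),
\]
padded with $+\infty$ after the last stage, so that terminating earlier counts as larger.

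Removing one cut only merges parts, so the part containing $N+1$ can only grow: $e_1$ weakly increases and $s_1$ weakly decreases. If both are unchanged, then $p_1$ is unchanged, the merge happened elsewhere, and we recurse on the later stages. At the first stage where something changes, $e_l$ goes up or $s_l$ goes down. If the scan stops earlier instead, the key jumps to $+\infty$ at that point. In every case the key strictly increases.

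The delicate points to check are:
\begin{itemize}
\item merges at the forced cut $s_l-1$, which enlarge the fixed part leftward and shrink the free region;
\item the termination and padding convention, since a merge can make fewer than $N+1$ points remain.
\end{itemize}

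\textbf{Step 4: assembling the filtration.} List the finitely many key values in decreasing order and let $F_pB$ be the span of all terms whose key is among the top $p$ values. By Step 3 each $F_pB$ is closed under the differential, hence a subcomplex. The graded piece $\Gr_pB$ is the direct sum of the fibers of the types sharing the $p$-th key value. Two distinct types have distinct keys, so each graded piece is in fact a single fiber, and by Step 2 it has the form \eqref{eqn: tensor product B_i, A_j} with the required bounds from Step 1.
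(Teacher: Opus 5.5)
Your proof is correct, but it takes a different route from the paper.

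\textbf{The paper's route.} The paper fixes the blocks once and for all. It records a composition by its separator set $S_\lambda\subseteq\{0,\ldots,n\}$, which includes $0$ and $n$. It cuts $\{0,\ldots,n\}$ into the fixed blocks $\{0,\ldots,N\},\{N+1,\ldots,2N+1\},\ldots$ and filters by the single integer $L(\lambda)=\sum_i (M_i-m_i)$. Here $m_i,M_i$ are the extreme separators in block $i$, and an empty block contributes $-1$. Monotonicity is then immediate, since deleting a separator can only shrink each span $[m_i,M_i]$. Each graded piece splits as a direct sum over the choices of $(m_i,M_i)$:
\begin{itemize}
\item the spans give the $B_{L_i}(\mathfrak A)$ factors;
\item the gaps between consecutive nonempty blocks give the $\mathfrak A_j$ factors;
\item one checks that no differential component joins two different choices.
\end{itemize}

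\textbf{Your route.} Your greedy scan adapts the blocks to $\lambda$, anchoring each stage at the part containing $p_{l-1}+N+1$. This costs you the lexicographic monotonicity argument of Step 3, which is right: at the first stage where the two scans differ, $e_l$ goes up or $s_l$ goes down. Incidentally, agreement through stage $l$ forces equal $p_l$, hence identical termination behaviour. So the first differing coordinate is never a padding position, and the $+\infty$ convention is never actually used. In return you get a finer filtration, in which every graded piece is a single fiber and hence a single tensor-product complex. No direct-sum decomposition or ``no maps between summands'' check is needed.

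\textbf{Comparison.} The counting that yields $k\le a$ and $i_{a+1}\le b$ is essentially the same in both arguments. The paper's statistic makes the subcomplex property nearly obvious, while your construction gives a more rigid description of the associated graded. Either feeds into the spectral-sequence argument for Theorem~\ref{thm: main extrapolating vanishing} unchanged.
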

\begin{proof}
Recall that we can label the terms $\mathfrak A_\lambda\coloneqq \mathfrak A_{\lambda_1}\otimes \cdots \otimes \mathfrak A_{\lambda_k}$ in the bar-cube (or bar-complex) by ordered partitions $\lambda=(\lambda_1,\ldots, \lambda_k)$ of $n$. We can view these $\lambda$ as a subset $S_\lambda \subseteq \{0,\ldots ,n\}$ by looking at the positions of the separators $S_\lambda=\{0,\lambda_1,\lambda_1+\lambda_2,\ldots, \lambda_1+\cdots +\lambda_k\}$. Note that in this definition $S_\lambda$ always includes the ends $0$ and $n$, this turns out to be more convenient for our purposes.

Now, we break $\{0,\ldots, n\}$ into blocks
$$B_1=\{0,\ldots, N\},\  B_2=\{N+1,\ldots, 2N+1\},\ \ldots,\ B_{a+1}=\{a(N+1),\ldots, a(N+1)+b\}$$
where the first $a$ blocks each have length $N+1$ and the last block has length $b+1$. 

For a partition $\lambda$, consider each block $1\le i\le a+1$. If $S_\lambda\cap B_i$ is non-empty, define the minimum and maximum separators in the block to be $m_i=\min(S_\lambda\cap B_i)$ and $M_i=\max(S_\lambda \cap B_i)$. Note that we always have $m_1=0$ and $M_{a+1}=n$. Then, define $L_i=M_i-m_i$ to be the length of the longest interval within the block whose endpoints are separators. If $S_\lambda \cap B_i$ is empty, then write $L_i=-1$ by default. Let the \textit{length} $L(\lambda)$ of the partition $\lambda$ to be the sum of lengths $L_1+\ldots +L_{a+1}$. We illustrate an example of this in Figure \ref{fig: length of partition}.

\begin{figure}[h]
\[
\large
\begin{matrix}\underbracket{\begin{matrix}\overbrace{
   \begin{matrix} \mid &\cdot & \mid & \mid \end{matrix}}^{L_1=3} & \cdot
\end{matrix}}_{B_1}& 
\underbracket{\begin{matrix}\cdot & \overbrace{
   \begin{matrix} \mid & \mid & \mid \end{matrix}}^{L_2=2} & \cdot 
\end{matrix}}_{B_2}&
\underbracket{\begin{matrix}\cdot & \!\!\!\!\!\overbrace{
   \begin{matrix} \mid \end{matrix}}^{L_3 = 0}\!\!\!\!\! & \cdot & \cdot & \cdot
\end{matrix}}_{B_3}&
\underbracket{\begin{matrix}\overbrace{
   \begin{matrix} \cdot & \cdot & \cdot & \cdot & \cdot \end{matrix}}^{L_4=-1}
\end{matrix}}_{B_4}&
\underbracket{\begin{matrix}\cdot & 
\overbrace{
   \begin{matrix} \mid  & \mid \end{matrix}}^{L_5=1} 
\end{matrix}}_{B_5}
\end{matrix}\]
 \caption{The partition $\lambda=(2,1,3,1,1,3,10,1)$ for $n=22, N=4$ with $a=5,b=2$ and length $L(\lambda)=3+2+0-1+1=5$, where $\mid$ denotes a separator and $\cdot$ denotes the absence of a separator.}
    \label{fig: length of partition}
\end{figure}
Now, consider an increasing filtration on $B=B_n(\mathfrak A)$
$$0=F_{-a}B\subseteq F_{-a+1}B\subseteq \cdots \subseteq F_{aN+b}B=B$$
where $F_lB$ is the sub-complex of $B$ consisting of exactly the terms $\mathfrak A_\lambda$ where $L(\lambda)\le l$. Since maps in the bar-complex send $\mathfrak A_\lambda$ to $\mathfrak A_{\lambda'}$ if and only if $S_{\lambda'}$ is obtained by removing exactly one element from $S_{\lambda}$, we see that $L(\lambda')\le L(\lambda)$ so $F_iB$ are indeed sub-complexes of $B$. 

The $l$-th graded piece of this filtration $\Gr_lB$ consists of the terms of length exactly $l$. There are many possible choices of $L_i$, together with $m_i,M_i$ (for those $i$ where $L_i\neq -1$) that have length $\sum L_i=l$. Now let us fix a choice of values $(m_i,M_i,L_i)$, and we want to know what $\lambda$ can have these values. Inside each block $B_i$, if $L_i=-1$, we are forced to have no separators in the block, i.e. $S_\lambda\cap B_i=\emptyset$. If $L_i\ge 0$, then $m_i$ and $M_i$ must be separators, anything smaller than $m_i$ and bigger than $M_i$ cannot be separators, and anything in between can either be separators or not. More precisely, we have the solutions $S_\lambda\cap B_i = \{m_i\} \cup T \cup \{M_i\}$ for any subset $T\subseteq \{m_i+1,\cdots ,M_i-1\}$. It is then clear that all such terms $\mathfrak A_\lambda$ that give rise to this $(m_i,M_i,L_i)$ form a complex of the form of Equation \eqref{eqn: tensor product B_i, A_j}. Indeed, each block with $L_i\ge 0$ has an interval $[m_i,M_i]$ which corresponds to a $B_{L_i}(\mathfrak A)$ term, and in between these intervals we will have $\mathfrak A_j$ of the appropriate size. The condition that $0\le i_l\le N$ follows from $L_i\le N$, and if $k=a$, $i_{a+1}\le b$ follows from $L_{a+1}\le b$. 

We illustrate an example of the possible set of partitions $\lambda$ in Figure \ref{fig: possible lambdas}, and here the corresponding terms in the bar-complex $\mathfrak A_\lambda$ form the complex $B_3(\mathfrak A)\otimes \mathfrak A_3\otimes B_2(\mathfrak A)\otimes \mathfrak A_3 \otimes B_0(\mathfrak A)\otimes \mathfrak A_{10}\otimes B_1(\mathfrak A)$.

\begin{figure}[h]
\[
\large
\begin{matrix}\underbracket{\begin{matrix}\overbrace{
   \begin{matrix} \mid &? & ? & \mid \end{matrix}}^{L_1=3} & \cdot
\end{matrix}}_{B_1}& 
\underbracket{\begin{matrix}\cdot & \overbrace{
   \begin{matrix} \mid & ? & \mid \end{matrix}}^{L_2=2} & \cdot 
\end{matrix}}_{B_2}&
\underbracket{\begin{matrix}\cdot & \!\!\!\!\!\overbrace{
   \begin{matrix} \mid \end{matrix}}^{L_3 = 0}\!\!\!\!\! & \cdot & \cdot & \cdot
\end{matrix}}_{B_3}&
\underbracket{\begin{matrix}\overbrace{
   \begin{matrix} \cdot & \cdot & \cdot & \cdot & \cdot \end{matrix}}^{L_4=-1}
\end{matrix}}_{B_4}&
\underbracket{\begin{matrix}\cdot & 
\overbrace{
   \begin{matrix} \mid  & \mid \end{matrix}}^{L_5=1} 
\end{matrix}}_{B_5}
\end{matrix}\]
 \caption{Possible partition $\lambda$s for a fixed choice of $(m_i,M_i,L_i)$ coming from the example in Figure \ref{fig: length of partition}, where $?$ denotes either the presence or absence of a separator.}
    \label{fig: possible lambdas}
\end{figure}
Hence, we have grouped the terms in $\Gr_l B$ into complexes of the form of Equation \eqref{eqn: tensor product B_i, A_j}, where each complex corresponds to some choice of $(m_i,M_i,L_i)$. We claim that $\Gr_l B$ is actually the direct sum of these complexes. To show this, it suffices to check that there are no maps between these complexes, i.e. there are no maps $\mathfrak A_\lambda\rightarrow \mathfrak A_{\lambda'}$ where $\lambda$ and $\lambda'$ have distinct values $(m_i,M_i,L_i)$ and $(m_i',M_i',L_i')$. This is not so hard to check as we just need to show that $S_{\lambda}$ does not contain $S_{\lambda'}$. Indeed, if there is some $L_i'>L_i$, then this must be true because the interval $[m_i',M_i']$ is longer than $[m_i,M_i]$ so either one of $m_i',M_i'\in S_{\lambda'}$ is not in $[m_i,M_i]$ and thus not in $S_\lambda$. Otherwise, if $L_i'=L_i$ for all $i$, there must be some $i$ where $[m_i',M_i']$ and $[m_i,M_i]$ are different, but since they are of the same length we again have either $m_i'\not\in S_\lambda$ or $M_i'\not\in S_{\lambda}$. This finishes the proof.
\end{proof}
\subsection{Finishing the proof} We then apply the multi-block filtration to finish the proof of our main theorem. 
\begin{proof}[Proof of Theorem \ref{thm: main extrapolating vanishing}]
Assume that $H_i(B_n,V^{\otimes n})$ vanishes in an $(N,I)$ staircase. We want to show that for all $n$, the homology vanishes when $i\le f(n)\coloneqq (I+1)\left\lfloor \frac{n+1}{N+1}\right\rfloor+ \max\left(I+\left\{ \frac{n+1}{N+1}\right\}-N\ ,\ 0\right)-1$. Note that this follows immediately when $n\le N$ so we are left to consider the case of $n>N$. As in Section \ref{sec: an example}, we translate this to the bar-complex via Equation \eqref{eqn: homology to bar-complex}. Our assumption is now equivalent to $H^i(B_n(\mathfrak A))$ vanishing in degrees $i\le I-N$ for $n\le N$, and we want to prove that $H^i(B_n(\mathfrak A))$ vanishes for $i\le f(n)-n$ for $n>N$.

Now fix some integer $n>N$. Consider the multi-block filtration $F_lB$ of $B=B_n(\mathfrak A)$ constructed in Proposition \ref{prop: multi-block filtration}. Recall from Section \ref{sec: an example} that by a spectral sequence argument it suffices to show that the cohomology of each graded piece vanishes when $i\le f(n)$, and since each graded piece is the direct sum of the complexes of the form of Equation \eqref{eqn: tensor product B_i, A_j} it suffices to show this for each direct summand of this form. 

The cohomology of the tensor product Equation \eqref{eqn: tensor product B_i, A_j} is the tensor product of the cohomology of each term. Our assumption tells us that each $B_{i_l}(\mathfrak A)$ has nonzero cohomology only in degree $i\ge \max(I-N+1,-i_l)$. Also recall that each $\mathfrak A_{j_l}$ lives in degree $-1$. Adding up these degrees, we see that the cohomology of the tensor product has nonzero cohomology only for $i\ge \lfloor \frac n {N+1}\rfloor(I-N)+\max(I-N+1,-\{\frac n {N+1}\})$ where we used the fact that $i_{a+1}\le b$. We rewrite the RHS as
$(I+1)\lfloor\frac{n}{N+1}\rfloor+\max(I+\{\frac{n}{N+1}\}-N+1,0)-n=f(n)+1-n$. This means that the cohomology vanishes for $i\le f(n)-n$, finishing the proof.
\end{proof}
\section{Two methods for homological vanishing}\label{sec: criteria} Let $V$ be a braided vector space. As mentioned in the introduction, in order to get homological vanishing results in families, we need a uniform way to prove some vanishing staircase of homology. One way to do this, as demonstrated in \cite{ES26}, is to prove vanishing in a minimal $(N,0)$ staircase, which is not difficult as it only requires understanding when $H^0$ vanishes. In this section, we give two more methods to detect homological vanishing, which can give much stronger vanishing results in families.
\subsection{From isomorphisms of shuffle products} Firstly, we give a necessary and sufficient condition for vanishing in a $(N,N-2)$ staircase in terms of shuffle products of the quantum shuffle algebra. This would then give homological vanishing of slope $\frac{N-1}{N+1}$ by Theorem \ref{thm: main extrapolating vanishing}. Let $\mathfrak A=\mathfrak A(V^*_{-1})$ be the relevant quantum shuffle algebra for $V$ by Equation \eqref{eqn: homology to bar-complex}.
\begin{proposition}\label{prop: vanishing from shuffle product}
Let $N\ge 2$ be an integer. The following are equivalent.
\begin{enumerate}[(\alph*)]
    \item $H_i(B_n,V^{\otimes n})$ vanishes in the $(N,N-2)$ staircase.
    \item For all $2\le n\le N$, we have $H_i(B_n,V^{\otimes n})$ vanishes for all $i$.
    \item For all $2\le n\le N$ and $p+q=n$, the shuffle product $\tau_{p,q}\colon \mathfrak A_p\otimes \mathfrak A_q\rightarrow \mathfrak A_{m}$ is an isomorphism.
    \item For all $2\le n\le N$, there exists some $p+q=n$ where the shuffle product $\tau_{p,q}\colon \mathfrak A_p\otimes \mathfrak A_q\rightarrow \mathfrak A_{m}$ is an isomorphism.
\end{enumerate}
\end{proposition}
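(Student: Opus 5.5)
The plan is to compare $\mathfrak A$ with the free (tensor) algebra $T(\mathfrak A_1)$ in degrees $\le N$, and to use dimension counts throughout. The basic dimension fact is that $\mathfrak A_m=(V^*_{-1})^{\otimes m}$ has dimension $d^m$ with $d=\dim V$. So every shuffle product $\tau_{p,q}\colon\mathfrak A_p\otimes\mathfrak A_q\to\mathfrak A_{p+q}$ (the $\mathfrak A_m$ in the statement should read $\mathfrak A_n$) is a map between spaces of equal dimension. Hence it is an isomorphism iff it is injective iff it is surjective.

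\textbf{(a)$\Leftrightarrow$(b).} By \eqref{eqn: homology to bar-complex}, $H_i(B_n,V^{\otimes n})$ for $0\le i\le n-1$ corresponds to $H^{i-n}(B_n(\mathfrak A))$, and $B_n(\mathfrak A)$ lives in degrees $-n,\dots,-1$. The $(N,N-2)$ staircase asks for vanishing for $i\le n-2$, i.e.\ in every degree except the top one $i=n-1$. For $n\ge 2$ the Euler characteristic of $B_n(\mathfrak A)$ is
\[
\sum_{\lambda}(-1)^{\ell(\lambda)}d^n=d^n\sum_{k=1}^{n}(-1)^k\binom{n-1}{k-1}=0 .
\]
So if all cohomology but one degree vanishes, the remaining degree vanishes as well, which gives (a)$\Rightarrow$(b). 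The converse is trivial. (For $n=1$ the staircase condition is empty, since $i\le -1$.)

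\textbf{(c)$\Leftrightarrow$(d).} The implication (c)$\Rightarrow$(d) is trivial. For (d)$\Rightarrow$(c) I would induct on $n\le N$, using associativity:
\[
\tau_{p,q+r}\circ(\id\otimes\tau_{q,r})=\tau_{p+q,r}\circ(\tau_{p,q}\otimes\id).
\]
Assume all shuffle products of total degree $<n$ are isomorphisms. Then every iterated multiplication $\mu_n\colon\mathfrak A_1^{\otimes n}\to\mathfrak A_n$ factors as $\tau_{p,q}$ composed with isomorphisms, for any choice of $p+q=n$. If one $\tau_{p,q}$ is an isomorphism, then $\mu_n$ is an isomorphism. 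Consequently every $\tau_{p',q'}$ is surjective, since $\mu_n$ factors through it, and by the dimension count each $\tau_{p',q'}$ is an isomorphism.

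\textbf{(b)$\Leftrightarrow$(c).} By the previous step, (c) is equivalent to the statement that the algebra map $T(\mathfrak A_1)\to\mathfrak A$ is an isomorphism in degrees $\le N$. The $n$-th bar-complex only involves the graded pieces $\mathfrak A_m$ with $m\le n$ and the products among them. Therefore, for $n\le N$, (c) gives an isomorphism of complexes $B_n(\mathfrak A)\cong B_n(T(\mathfrak A_1))$. For a tensor algebra, $\Tor$ is concentrated in bidegree $(1,1)$, so $B_n(T(\mathfrak A_1))$ is acyclic for $n\ge2$, and (b) follows.

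For (b)$\Rightarrow$(c) I would again induct on $n$. If $B_m(\mathfrak A)$ is acyclic for $2\le m\le n$, then $\Tor^{\mathfrak A}_{1,m}=0$ for $2\le m\le n$. This means $\mathfrak A$ is generated by $\mathfrak A_1$ up to degree $n$, so $\mu_n$ is surjective. By the dimension count $\mu_n$ is then an isomorphism, and hence so is every $\tau_{p,q}$ with $p+q=n$. Alternatively, in the bar-complex itself, the $H^{-1}$ piece of $B_n(\mathfrak A)$ is the cokernel of $\bigoplus_{p+q=n}\mathfrak A_p\otimes\mathfrak A_q\to\mathfrak A_n$. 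Under the inductive hypothesis every source term $\mathfrak A_p\otimes\mathfrak A_q$ is a quotient of $\mathfrak A_1^{\otimes n}$ via the isomorphisms of lower degree, so this cokernel equals $\operatorname{coker}\mu_n$.

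\textbf{Main obstacle.} The step needing the most care is the identification of $B_n(\mathfrak A)$ with $B_n(T(\mathfrak A_1))$ in (c)$\Rightarrow$(b). One has to check that the Koszul sign conventions and differentials match, and I would argue this via naturality of the bar construction in algebra maps: the map $T(\mathfrak A_1)\to\mathfrak A$ induces a map of bar-complexes that is an isomorphism termwise in degrees $\le N$. One also needs the standard acyclicity of the bar-complex of a free algebra in weights $\ge2$, which can be quoted or proved by the usual contracting homotopy that splits off the last letter.
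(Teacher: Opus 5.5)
Your proof is correct. Apart from the Euler-characteristic argument for (a)$\Leftrightarrow$(b), which is exactly the paper's, it takes a genuinely different route.

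The paper inducts on $N$. For both (d)$\Rightarrow$(b) and (b)$\Rightarrow$(c) it filters $B_N(\mathfrak A)$ by the number of separators lying in $\{1,\dots,N-1\}\setminus\{p\}$. The associated graded splits as a direct sum of two-term complexes $\id\otimes\cdots\otimes\tau_{i_j,i_{j+1}}\otimes\cdots\otimes\id$. By induction all of these are isomorphisms except the single piece $\tau_{p,q}\colon\mathfrak A_p\otimes\mathfrak A_q\to\mathfrak A_N$, so $H^*(B_N(\mathfrak A))\cong H^*(\tau_{p,q})$. This is the same separator-filtration device used for Theorem \ref{thm: main extrapolating vanishing}. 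It is fully self-contained and reaches (b) from (d) without passing through (c).

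You instead use the algebra structure:
\begin{itemize}
\item Associativity plus the equal-dimension count gives (c)$\Leftrightarrow$(d) directly.
\item You recast (c) as ``$T(\mathfrak A_1)\to\mathfrak A$ is an isomorphism through degree $N$'', i.e.\ the subalgebra generated by $\mathfrak A_1$ has no relations up to degree $N$.
\item (b)$\Leftrightarrow$(c) then follows from naturality of the bar construction, acyclicity of the bar complex of a free algebra in weight $\ge 2$, and the identification of $\Tor_1$ with indecomposables.
\end{itemize}
Your approach gives a more conceptual reading of the criterion, at the cost of quoting the standard $\Tor$ computation for tensor algebras.

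Your (b)$\Rightarrow$(c) step also has a useful by-product: it only uses $\Tor^{\mathfrak A}_{1,m}=0$, i.e.\ vanishing of the top group $H_{m-1}(B_m,V^{\otimes m})$ for $2\le m\le N$. So top-degree vanishing at each level is itself equivalent to (a)--(d), complementing (a), which asks for vanishing in every degree except the top one.
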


The proof will follow the same style as the proof of the main theorem, but it will be much simpler. The crux of the proof is to deduce homological vanishing (b) from some isomorphism of shuffle products (d). The idea is to use a filtration to break up the $N$-th bar-cube into various graded pieces, each consisting of two terms connected by an arrow in a fixed direction.

For example, consider the $4$-th bar-cube broken down into graded pieces each corresponding to a red vertical arrows in Figure \ref{fig: 4th bar-cube}. Then, if we knew that $\tau_{2,2}\colon \mathfrak A_2\otimes \mathfrak A_2\rightarrow \mathfrak A_4$ is an isomorphism, we can conclude by induction on $N$ that the other graded pieces are also isomorphisms. Indeed, they can be obtained by tensoring $\tau_{p,q}\colon \mathfrak A_p\otimes \mathfrak A_q\rightarrow \mathfrak A_m$ for $p+q=m<4$ (which we know are isomorphisms by (c) due to induction) with terms of the form $\mathfrak A_j$. This tells us that the homology vanishes.

\begin{figure}[h]
    \centering
% https://q.uiver.app/#q=WzAsOCxbMCwxLCIxMTExIl0sWzIsMSwiMjExIl0sWzAsMywiMTIxIl0sWzIsMywiMzEiXSxbMywyLCI0Il0sWzEsMiwiMTMiXSxbMSwwLCIxMTIiXSxbMywwLCIyMiJdLFswLDFdLFswLDIsIiIsMCx7ImNvbG91ciI6WzAsNjAsNjBdfV0sWzIsM10sWzAsNl0sWzEsN10sWzYsN10sWzUsNF0sWzMsNF0sWzIsNV0sWzYsNSwiIiwwLHsiY29sb3VyIjpbMCw2MCw2MF19XSxbNyw0LCIiLDAseyJjb2xvdXIiOlswLDYwLDYwXX1dLFsxLDMsIiIsMCx7ImNvbG91ciI6WzAsNjAsNjBdfV1d
\[\begin{tikzcd}
	& 112 && 22 \\
	1111 && 211 \\
	& 13 && 4 \\
	121 && 31
	\arrow[from=1-2, to=1-4]
	\arrow[color={rgb,255:red,214;green,92;blue,92}, from=1-2, to=3-2]
	\arrow[color={rgb,255:red,214;green,92;blue,92}, from=1-4, to=3-4]
	\arrow[from=2-1, to=1-2]
	\arrow[from=2-1, to=2-3]
	\arrow[color={rgb,255:red,214;green,92;blue,92}, from=2-1, to=4-1]
	\arrow[from=2-3, to=1-4]
	\arrow[color={rgb,255:red,214;green,92;blue,92}, from=2-3, to=4-3]
	\arrow[from=3-2, to=3-4]
	\arrow[from=4-1, to=3-2]
	\arrow[from=4-1, to=4-3]
	\arrow[from=4-3, to=3-4]
\end{tikzcd}\]
\caption{$4$-th bar-cube with vertical graded pieces. }\label{fig: 4th bar-cube}
\end{figure}
We make this precise in the proof below.
\begin{proof}[Proof of Proposition \ref{prop: vanishing from shuffle product}]
We proceed by induction on $N$. Assuming the result for all smaller values of $N$, we now prove it for $N$. Note that $(b\Rightarrow a)$ and $ (c\Rightarrow d)$ are clear from the definitions. $(a\Rightarrow b)$ follows from the fact that the Euler characteristic of the bar-complex $B_N(\mathfrak A)$ is zero, so vanishing in $H_i(B_N,V^{\otimes N})$ for $0\le i\le N-2$ implies vanishing for $H_{N-1}(B_N,V^{\otimes N})$ as well.

We now prove $(d\Rightarrow b)$. Let $p,q$ be the isomorphism $\mathfrak A_p\otimes \mathfrak A_q \rightarrow \mathfrak A_N$ as assumed in (d). Recall that each partition $\lambda$ corresponds to a set of separators $S_\lambda\subseteq \{0,\ldots, N\}$ as in the proof of Proposition \ref{prop: multi-block filtration}. Consider the increasing filtration on $B=B_N(\mathfrak A)$
$$0=F_0B\subseteq F_1B \subseteq \cdots \subseteq  F_{N-1}B=B$$
where $F_lB$ is the sub-complex consisting of the terms $\mathfrak A_\lambda$ where $|S_\lambda \cap (\{1,\ldots, \hat p,\ldots, N-1\}|\le l$, where $\hat p$ denotes the omission of the element $p$. Then, the graded piece $\Gr_lB$ then consists of exactly those $\mathfrak A_\lambda$ where we have equality above. 

We claim that $\Gr_lB$ is a direct summand of the two-term complexes $\mathfrak A_{\lambda_1(T)} \rightarrow \mathfrak A_{\lambda_2(T)}$ where $T$ ranges over size $l$ subsets of $\{1,\ldots, \hat p, \ldots ,N-1\}$, and $\lambda_1(T),\lambda_2(T)$ are defined by $S_{\lambda_1(T)}=T\sqcup\{0,p,N\}$, $S_{\lambda_2(T)}=T\sqcup\{0,N\}$. Indeed, it is clear that these terms are exactly those with $|S_{\lambda}\cap \{1,\ldots, \hat p, \ldots, N-1\}|=l$, so we just need to check that there are no maps between different two-term complexes. This is clear as $S_{\lambda_i(T)}$ cannot contain $S_{\lambda_j(T')}$ when $T\neq T'$, because that would mean $T$ contains $T'$ which is impossible as $T,T'$ are different sets with the same size.

Hence, to show (b), it suffices to show that all these two-term complexes are isomorphisms. If $T=\emptyset$, we have $\tau_{p,q}\colon\mathfrak A_p\otimes \mathfrak A_{q}\rightarrow \mathfrak A_N$ which is an isomorphism by condition (d). Otherwise, write $T\sqcup\{0,p,N\}=\{0,i_1,i_1+i_2,\ldots, i_1+\cdots +i_k\}$, with $p=i_1+\ldots+i_j$, then the two-term complex is exactly $$\id \otimes \cdots \otimes \tau_{i_j,i_{j+1}}\otimes \cdots\otimes \id\colon \ \mathfrak A_{i_1}\otimes \cdots \otimes \mathfrak A_{i_j}\otimes \mathfrak A_{i_{j+1}}\otimes \cdots \mathfrak A_{i_k}\rightarrow \mathfrak A_{i_1}\otimes \cdots \otimes \mathfrak A_{i_j+i_{j+1}}\otimes \cdots \otimes \mathfrak A_{i_k}.$$
This is an isomorphism becaus $\tau_{i_j,i_{j+1}}$ is an isomorphism as (c) is true for $N-1$ by the induction hypothesis, and $i_j+i_{j+1}<N$.

We are left to prove $(b\Rightarrow c)$. Fix any $p+q=N$, and we want to show $\tau_{p,q}\colon\mathfrak A_p\otimes \mathfrak A_{q}\rightarrow \mathfrak A_N$ is an isomorphism. Consider the same filtration on $B=B_N(\mathfrak A)$, and by the same argument above, it follows from the induction hypothesis on (c) that all two-term complexes with $T\neq \emptyset$ are isomorphisms. Thus, the cohomology of $B$ is simply the cohomology of the last remaining graded piece $\tau_{p,q}\colon\mathfrak A_p\otimes \mathfrak A_{q}\rightarrow \mathfrak A_N$, and vanishing of the former implies vanishing of the latter, i.e. that $\tau_{p,q}$ is an isomorphism.
\end{proof}

\subsection{From the action of the fundamental braid} Another way to prove homological vanishing of a certain slope is to show that $H_i(B_n,V^{\otimes n})$ vanishes for all $i$ for a positive density $\rho$ of $n\in \N$. It is not always possible to prove such strong vanishing results, so this technique will only apply for certain cases. This would lead to homological vanishing of slope at least $\rho$, due to the following corollary of our main theorem.
\begin{corollary}\label{cor: density}
Let $S\subseteq \N$ be the set of all $n$ where $H_i(B_n,V^{\otimes n})\neq 0$ for some $i$. Let the elements of $S$ be $1=n_1<n_2<\cdots$, then we have $H_i(B_{n_j},V^{\otimes n_j})=0$ for $i\le n_j-j-1$.
\end{corollary}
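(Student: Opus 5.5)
The plan is to prove the statement by strong induction on $j$, applying Theorem \ref{thm: main extrapolating vanishing} once per step. Each application uses the staircase ending just before $n_j$ and extrapolates by a single step to $n=n_j$. For $n\notin S$ all homology vanishes by definition of $S$, so only the elements of $S$ need handling.

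\textbf{Base cases.} When $n_j-j-1<0$ the statement is vacuous. This covers $j=1$, since $n_1=1$. It also covers $j=2$ with $n_2=2$, since $2-2-1<0$. So I may assume $I\coloneqq n_j-j-1\ge 0$. Set $N\coloneqq n_j-1$. Because $I\ge 0$ forces $n_j\ge j+1\ge 3$, we get $N\ge 2$, as Theorem \ref{thm: main extrapolating vanishing} requires.

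\textbf{Staircase check.} I will show that $H_i(B_n,V^{\otimes n})$ vanishes in the $(N,I)$ staircase. That is, for every $n\le N$ and every $i\le I+n-N=n-j$, the group $H_i(B_n,V^{\otimes n})$ is zero. There are two cases.
\begin{itemize}
\item If $n\notin S$, everything vanishes.
\item If $n\in S$ and $n\le N<n_j$, then $n=n_l$ for some $l\le j-1$. The induction hypothesis gives vanishing for $i\le n-l-1$. Since $l\le j-1$, we have $n-j\le n-l-1$, so the required range is covered. For $n=1$ the range $i\le 1-j$ is empty, so there is nothing to check.
\end{itemize}

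\textbf{Extrapolation to $n=n_j$.} Now apply Theorem \ref{thm: main extrapolating vanishing} at $n=N+1=n_j$. Here $\lfloor (N+2)/(N+1)\rfloor=1$ and the remainder of $N+2$ modulo $N+1$ is $1$. So \eqref{eqn: i in terms of n} gives vanishing for
$$i\le (I+1)+\max(I+1-N,0)-1,$$
and the right-hand side is at least $I=n_j-j-1$. This is exactly the claimed vanishing range, which completes the induction.

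The only delicate point is the index bookkeeping: the offset $I+n-N=n-j$ must line up with the induction hypothesis $i\le n_l-l-1$ for every $l<j$. This works precisely because fewer than $j$ elements of $S$ lie strictly below $n_j$. The edge cases $N<2$ or $I<0$ are exactly the ones where the statement is vacuous.
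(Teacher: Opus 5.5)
Your proof is correct and is the same argument as the paper's. Both go by induction on $j$: you use the induction hypothesis, through the inequality $n_l-j\le n_l-l-1$ for $l<j$, to get vanishing in the $(n_j-1,\,n_j-j-1)$ staircase, and then apply Theorem~\ref{thm: main extrapolating vanishing} at $n=n_j$. Your extra checks that $N\ge 2$ and $I\ge 0$, and your evaluation of \eqref{eqn: i in terms of n} at $n=N+1$, make explicit what the paper leaves implicit.
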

In words, this is telling us that for every $n$ where homology doesn't vanish, there can be at most one more homological degree where homology is non-vanishing.
\begin{proof}
We prove this by induction on $j$. The $j=1$ case is vacuous as $n_j-j-1<0$. Now suppose we know the statement for smaller $j'<j$ and we want to prove $H_i(B_{n_j},V^{\otimes n_j})=0$ for $i\le n_j-j-1$. The induction hypothesis tells us that $H_i(B_n,V^{\otimes n})$ vanishes in a $(n_j-1,n_j-j-1)$ staircase, indeed, by definition, it suffices to check that $H_i(B_{n_{j'}},V^{\otimes n_{j'}})=0$ for $i\le (n_j-j-1)+n_{j'}-(n_j-1)=n_{j'}-j\le  n_{j'}-j'-1$, which is exactly what we assumed. Applying Theorem \ref{thm: main extrapolating vanishing} for $n=n_j$ gives the desired result.
\end{proof}

We remark that we can often get better vanishing results if we use more information on the set $S$. To see this, consider the toy example where $S=\{1,10,12,14,\ldots\}$. Then Corollary \ref{cor: density} tells us that this vanishes with slope $\frac 1 2$, but simply applying vanishing in a $(9,7)$ staircase gives $\frac 4 5$ vanishing slope. We will see more examples in Section \ref{sec: individual C_wedge} where we use ad-hoc applications of Theorem \ref{thm: main extrapolating vanishing} to get a better bound for individual braided vector spaces.

We will prove such vanishing by looking at the action of the fundamental braid on group cohomology. The fundamental braid is defined to be $\Delta_n=\prod_{k=1}^{n-1}(\sigma_1\cdots\sigma_{n-k})$ and its square is $\Delta_n^2=(\sigma_1\cdots\sigma_{n-1})^n$. The center of the braid group $Z(B_n)$ is cyclic and generated by the square of the fundamental braid $\Delta_n^2$ when $n>2$ and the fundamental braid $\Delta_2$ when $n=2$ \cite{Garside_1969}. We have the following proposition.
\begin{proposition}\label{prop: vanishing from fundamental braid}
Let $n\ge 2$ be an integer. Consider a power of the fundamental braid $\Delta_n^k$, where we require $k$ to be even if $n\ge 3$, and suppose that it acts on $V^{\otimes n}$ by a nontrivial scalar $\lambda\neq 1$. Then, the homology $H_i(B_n,V^{\otimes n})$ vanishes for all $i$. 
\end{proposition}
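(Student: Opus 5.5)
The plan is the standard "center kills" argument: a central element acting by a nontrivial scalar must act on homology both trivially and by that scalar.

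First I check that $\Delta_n^k$ is central in $B_n$ under the stated hypotheses. For $n\ge 3$ the exponent $k$ is even, so $\Delta_n^k=(\Delta_n^2)^{k/2}$ lies in $Z(B_n)=\langle \Delta_n^2\rangle$ by \cite{Garside_1969}. For $n=2$ we have $\Delta_2=\sigma_1$, which generates the abelian group $B_2\cong\Z$, so every power is central. Write $z=\Delta_n^k$ and $M=V^{\otimes n}$.

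Next, for any group $G$, central element $z$ and $G$-module $M$, the pair (conjugation by $z$ on $G$, the map $m\mapsto zm$ on $M$) is a compatible pair. It therefore induces an endomorphism of $H_i(G,M)$. I compute this endomorphism in two ways.

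\begin{itemize}
\item It is the identity. The standard fact (e.g.\ Brown, \emph{Cohomology of Groups}, Prop.\ II.6.2) says that conjugation by $g\in G$, together with $m\mapsto gm$, induces the identity on $H_*(G,M)$. Centrality of $z$ means the group map is literally $\id_G$, and the module map is multiplication by $z$.
\item It is multiplication by $\lambda$. Since $z$ acts on $M$ by the scalar $\lambda$, the pair equals $(\id_G,\lambda\cdot\id_M)$. By functoriality in the coefficients, and linearity over $\C$, this induces multiplication by $\lambda$ on $H_i(G,M)$.
\end{itemize}

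Hence $(\lambda-1)$ annihilates $H_i(B_n,V^{\otimes n})$. Since $\lambda\neq 1$ and we work over $\C$, the homology vanishes in every degree $i$.

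The only step needing care is the first bullet. To make it concrete I would use a free resolution $F_\bullet\to\Z$ over $\Z B_n$ and the chain map $F\otimes_{B_n}M\to F\otimes_{B_n}M$ given by $x\otimes m\mapsto xz^{-1}\otimes zm$. This map equals the identity on the nose, because $xz^{-1}\otimes zm = x\otimes z^{-1}zm$ after moving $z^{-1}$ across the tensor product, where the tensor over $\Z B_n$ is formed with the appropriate right-module convention. The same chain map also equals $(\text{map induced by }z^{-1}\text{ on }F)\otimes \lambda$. The map induced by $z^{-1}$ on $F$ lifts the identity of $\Z$, using that $z$ is central so that right multiplication by $z^{-1}$ is $\Z B_n$-linear. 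It is therefore chain homotopic to $\id_F$. Comparing the two descriptions gives $\id=\lambda\cdot\id$ on $H_i$.

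There is no real obstacle beyond getting this bookkeeping right.
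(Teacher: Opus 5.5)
Your proof is correct and follows the paper's argument: the paper also uses that $\Delta_n^k$ is central, so its conjugation action on the standard chains $C_i(B_n,V^{\otimes n})$ is multiplication by $\lambda$, while conjugation acts trivially on group homology, which forces $(\lambda-1)H_i=0$. The only difference is that the paper cites this triviality for homology with coefficients (Brown's Proposition III.8.1), whereas your free-resolution computation proves that step directly.
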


This will follow from a standard argument in group cohomology. Combined with Corollary \ref{cor: density}, this gives us a way to prove vanishing slope in families. We also remark that the statement is also true when $V^{\otimes n}$ is replaced by another $B_n$-representation.
\begin{proof}
We first recall some facts from group cohomology. Let $M$ be a $G$-module. The group homology $H_i(G,M)$ is the homology of a chain complex $C_i(G,M)=M\otimes \Z[G^i]$, where we write an element of $C_i(G,M)$ as $m[g_1|\cdots|g_i]$. The group $G$ acts on the chain complex by conjugation, more precisely, $h\in G$ acts on $C_i(G,M)$ by sending $m[g_1|\cdots|g_i]$ to $hm[hg_1h^{-1}|\cdots|hg_ih^{-1}]$. This induces an action of $G$ on $H_*(G,M)$ by conjugation, and it is a fact \cite[Proposition III.8.1]{Brown_1982} that this action is trivial, and this is proven by constructing an explicit chain homotopy between the action of $h$ and the identity. 

Now we specialize to our case where $G=B_n$ and $M=V^{\otimes n}$ and consider the action of $\Delta^k_n$ on $C_i(B_n,V^{\otimes n})$. Recall our assumption that $\Delta^k_n$ acts on $V^{\otimes n}$ by a scalar $\lambda\neq 1$, so this action sends $m[g_1|\cdots |g_i]$ to $\lambda m[\Delta^k_n g_1\Delta^{-k}_n|\cdots|\Delta^k_ng_i\Delta^{-k}_n]=\lambda m[g_1|\cdots |g_i]$ since $\Delta^k_n$ is in the center of the braid group. Hence, $\Delta^k_n$ acts on $H_i(B_n,V^{\otimes n})$ by $\lambda$ for all $i$. However, we also know that this action is trivial, so $\lambda z=z$ for any element $z\in H_i(B_n,V^{\otimes n})$, which forces $H_i(B_n,V^{\otimes n})=0$ for all $i$.
\end{proof}
\section{Twists of braided vector spaces}\label{sec: twists of BVS} 
We now study the family of twists of a braided vector space $V$. In this section, we will study the most general case where there are no assumptions on $V$, this will by done using the first method of analyzing isomorphisms of shuffle products as in Proposition \ref{prop: vanishing from shuffle product}. In the next section, we will study cases where there are additional restrictions imposed on $V$, which will lead to stronger vanishing results.

Recall that the twist $V_\zeta$ of $V$ by $\zeta\in \C^\times$ is defined by the braiding $R_{V_\zeta}=\zeta \cdot R_V$. We do not assume that $\zeta$ is a root of unity. Let $\mathcal F_V$ be the family of all twists of $V$, and we define
$$\mathcal F_{V,N}=\{V_\zeta\in \mathcal F_V \colon H_i(B_n,V_\zeta^{\otimes n})\text{ vanishes in a }(N,N-2)\text{ staircase}\}.$$
We think of this as a ``nice set'', as for $V_\zeta \in \mathcal F_{V,N}$, applying Theorem \ref{thm: main extrapolating vanishing} tells us that $H_i(B_n,V_\zeta^{\otimes n})$ vanishes with slope at least $\frac{N-1}{N+1}$.

We will prove vanishing in a maximal staircase of $(N,N-2)$ for all but finitely many twists in $\mathcal F_V$, i.e. $\mathcal F_V\setminus \mathcal F_{V,N}$ is finite for all $N$. As a consequence, for any $\epsilon >0$, all but finitely many twists in $\mathcal F_V$ vanish with slope $>1-\epsilon$. It turns out that the exceptional twists in $\mathcal F_V\setminus \mathcal F_{V,N}$ can be listed from a computation, and we give examples of this in Section \ref{sec: computation examples}. 

\subsection{General case}\label{sec: twist general case} Let the braided vector space $V$ have dimension $d$. We prove the following.
\begin{proposition}\label{prop: general case twist}
Let $N\ge 2$ be an integer. Then, the set of exceptions $\mathcal F_V\setminus \mathcal F_{V,N}$ has cardinality less than $N^2 d^N$, and its elements can be computed explicitly. Furthermore, if $V$ is defined over a number field $K$ and $[K(\zeta)\colon K]>(N-1)d^N$, then $V_\zeta\in \mathcal F_{V,n}$. 
\end{proposition}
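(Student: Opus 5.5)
The plan is to verify criterion (d) of Proposition \ref{prop: vanishing from shuffle product} for every $2\le n\le N$, always using the same shuffle product $\tau_{n-1,1}$. We will show that the failure of this map to be an isomorphism is cut out by the vanishing of a nonzero polynomial in the twisting parameter.

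\textbf{Setting up the twisted algebra.} The relevant algebra for $V_\zeta$ is $\mathfrak A(\zeta)\coloneqq\mathfrak A((V_\zeta)^*_{-1})$. The dual braiding of $V_\zeta$ is the transpose of $\zeta R_V$, namely $\zeta R_V^*$. After the sign twist, this gives $(V_\zeta)^*_{-1}=(V^*)_{c}$ with $c=-\zeta$.

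For any permutation $\sigma$, the Matsumoto lift is a positive word of length $\ell(\sigma)$ in the generators. Hence $R^{(c)}_\sigma=c^{\ell(\sigma)}R^{(1)}_\sigma$, where $R^{(1)}_\sigma$ is the braiding for the untwisted $V^*$.

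The shuffles in $\mathrm{Sh}(n-1,1)$ are determined by where the last letter lands. They have lengths $0,1,\ldots,n-1$, one of each, and length $0$ corresponds to the identity. Therefore, in a fixed basis of $(V^*)^{\otimes n}$,
$$\tau_{n-1,1}=\sum_{j=0}^{n-1}c^j T_j,\qquad T_0=\id,$$
where each $T_j$ is a $d^n\times d^n$ matrix independent of $c$.

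\textbf{The exceptional set.} Let $P_n(c)=\det\tau_{n-1,1}$. This is a polynomial in $c$ of degree at most $(n-1)d^n$, and it satisfies $P_n(0)=\det(\id)=1$, so $P_n$ is nonzero. It therefore has at most $(n-1)d^n$ roots, and these can be computed explicitly from the matrices $T_j$ (which are themselves computed from $R_V$).

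If $-\zeta$ is not a root of any $P_n$ for $2\le n\le N$, then condition (d) of Proposition \ref{prop: vanishing from shuffle product} holds. Hence $V_\zeta\in\mathcal F_{V,N}$. The number of exceptions is at most
$$\sum_{n=2}^N (n-1)d^n\le (N-1)\sum_{n\le N}d^n< N^2d^N.$$
This crude bound suffices. The case $d=1$ needs a small check of the inequality, but it holds since $\sum_{n=2}^N(n-1)<N^2$.

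\textbf{Number field refinement.} If $V$ is defined over $K$, then so is $V^*$, and the matrices $T_j$ have entries in $K$. Thus each $P_n\in K[c]$. If $-\zeta$ were a root of some $P_n$, then
$$[K(\zeta):K]=[K(-\zeta):K]\le \deg P_n\le (n-1)d^n\le (N-1)d^N.$$
Consequently, whenever $[K(\zeta):K]>(N-1)d^N$, the twist $V_\zeta$ lies in $\mathcal F_{V,N}$.

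\textbf{Main obstacle.} The main point needing care is the bookkeeping of the twist through duality and the $-1$ twist. We must confirm that the twist parameter enters $\mathfrak A$ as a single scalar $c$ raised to the power $\ell(\sigma)$, and that $c$ is related to $\zeta$ by $c=\pm\zeta^{\pm1}$, so that $K(c)=K(\zeta)$. The nonvanishing of $P_n$ then follows from evaluating at $c=0$. Evaluating at $c=0$ is legitimate because it is only a statement about the polynomial, even though $c=0$ is not itself a twist.
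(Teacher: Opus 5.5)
Your proposal is correct and follows the paper's argument. Like the paper, you reduce to criterion (d) of Proposition~\ref{prop: vanishing from shuffle product} and view one shuffle product per $n$ (the $(1,n-1)$ or $(n-1,1)$ shuffle) as a $d^n\times d^n$ matrix. Its determinant is a polynomial in the twist parameter with constant term $1$ and degree at most $(n-1)d^n$, and you then count roots and bound field degrees exactly as the paper does.

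One small addition is needed. You only show that the exceptional set is contained in the roots of the $P_n$. To claim the exceptional set itself (rather than a finite superset) is explicitly computable, also invoke the direction (a)$\Rightarrow$(c) of that proposition. It shows that every root of some $P_n$ with $2\le n\le N$ is a genuine exception, so the exceptional set is exactly the zero set of $\prod_{n=2}^N P_n$; the paper reaches the same set through its $\gcd$ formulation.
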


We remark that the size of $\mathcal F_V\setminus \mathcal F_{V,N}$ is often much smaller than the bound given above, we will see this in the examples given in Section \ref{sec: computation examples}. We also note that if $\zeta$ is a root of unity and $V$ is defined over $\Q$, the second statement will tell us that $\phi(\ord(\zeta))>(N-1)d^N$ implies $V_\zeta \in \mathcal F_{V,N}$.
\begin{proof}
We will apply Proposition \ref{prop: vanishing from shuffle product} in this setting and deduce the vanishing from isomorphism of shuffle products. By Equation \eqref{eqn: homology to bar-complex}, the relevant quantum shuffle algebra for the homology of $V_{\zeta}$ is $\mathfrak A^\zeta\coloneqq \mathfrak A(V^*_{-\bar\zeta})$ where we write $V^*_{-\bar\zeta}=(V^*)_{-\bar\zeta}$. We choose a basis $\{v_1,\ldots , v_d\}$ of $V^*$. If $V$ is defined over a number field $K\subseteq \C$, then $V^*$ is defined over the complex conjugate $c(K)$. We choose a basis of $V^*$ so that the braiding is defined over $c(K)$. Then, as a vector space, $V_{-\bar\zeta}^{*\otimes m}$ has a basis of words of the form $v_{i_1}\ldots v_{i_m}$. 

Note that for $p+q=n$, we have $\mathfrak A^\zeta_p\otimes \mathfrak A^\zeta_q\cong V_{-\bar\zeta}^{*\otimes p}\otimes V_{-\bar\zeta}^{*\otimes q}\cong V_{-\bar\zeta}^{*\otimes n}$ and $\mathfrak A^\zeta_n\cong V_{-\bar\zeta}^{*\otimes n}$, so we can use the basis of words earlier in lexicographical order to view the shuffle product $\tau_{p,q}^\zeta \colon \mathfrak A^\zeta_p\otimes \mathfrak A^\zeta_{q}\rightarrow \mathfrak A^\zeta_n$ as a $d^n\times d^n$ matrix. Let us call this matrix $M^\zeta_{p,q}$, this has coefficients which are polynomials in $\bar \zeta$. The shuffle product $\tau^\zeta_{p,q}$ is an isomorphism if and only if $M^\zeta_{p,q}$ is invertible, which happens if and only if $f_{p,q}(\bar \zeta)\coloneqq \det(M^\zeta_{p,q})$ is nonzero. 

Now, the equivalence between (a) and (d) of Proposition \ref{prop: vanishing from shuffle product} immediately tells us the following.

\begin{lemma}\label{lem: P_N}
We have $V_\zeta\in \mathcal F_V\setminus \mathcal F_{V,N}$ if and only if $\bar\zeta$ is a root of $$P_N(\bar\zeta)=\prod_{n=2}^N \gcd(f_{1,n-1},\ldots, f_{n-1,1}).$$
\end{lemma}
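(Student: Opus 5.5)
We will derive Lemma \ref{lem: P_N} directly from the equivalence (a)$\Leftrightarrow$(d) of Proposition \ref{prop: vanishing from shuffle product}, applied to the braided vector space $V_\zeta$. Its relevant quantum shuffle algebra is $\mathfrak A^\zeta=\mathfrak A(V^*_{-\bar\zeta})$. The key point is that the braiding of $V^*_{-\bar\zeta}$ is $-\bar\zeta$ times the braiding of $V^*$. So for any permutation $\sigma$, the operator $R_\sigma$ on $V^{*\otimes n}_{-\bar\zeta}$ equals $(-\bar\zeta)^{\ell(\sigma)}R^{V^*}_\sigma$, where $\ell(\sigma)$ is the length of the Matsumoto lift. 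Consequently each entry of $M^\zeta_{p,q}$ is a polynomial in $\bar\zeta$ whose coefficients lie in $c(K)$ (or in $\C$ in general). It follows that $f_{p,q}(\bar\zeta)=\det M^\zeta_{p,q}$ is a polynomial in $\bar\zeta$, and $\tau^\zeta_{p,q}$ is an isomorphism exactly when $f_{p,q}(\bar\zeta)\neq 0$.

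The argument then runs as follows. By (a)$\Leftrightarrow$(d), $V_\zeta\in\mathcal F_{V,N}$ if and only if for every $2\le n\le N$ there is some $p+q=n$ with $f_{p,q}(\bar\zeta)\neq0$. Negating, $V_\zeta\notin\mathcal F_{V,N}$ if and only if there is some $n\in[2,N]$ with $f_{p,q}(\bar\zeta)=0$ for all $p+q=n$, $p,q\ge1$. For fixed $n$, the polynomials $f_{1,n-1},\dots,f_{n-1,1}$ share a root $\bar\zeta$ exactly when $\bar\zeta$ is a root of $g_n=\gcd(f_{1,n-1},\dots,f_{n-1,1})$, since the gcd in $\C[x]$ is the product over common roots with minimal multiplicity. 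Finally, $\bar\zeta$ is a root of some $g_n$ if and only if it is a root of the product $P_N=\prod_n g_n$.

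The one subtlety we have to check is degeneracy: if some $f_{p,q}$ is the zero polynomial, the gcd is still well defined, but it could be zero. This happens only if all $f_{p,q}$ with $p+q=n$ vanish identically, and in that case every $\zeta$ is exceptional, so the statement still holds with the convention that $0$ has every $\bar\zeta$ as a root. We would remark that this degenerate case cannot occur, because at $\bar\zeta=0$ the shuffle product becomes $R_{\mathrm{id}}$ on the identity shuffle alone. That map is the identity isomorphism, so $f_{p,q}(0)\neq0$ and every $f_{p,q}$ is a nonzero polynomial.

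The main obstacle, mild here, is being careful that the polynomial dependence really is in $\bar\zeta$ alone, with the dual and the sign twist tracked correctly, and that $\zeta=0$ is excluded since twists require $\zeta\in\C^\times$. The nonvanishing observation at $0$ also sets up the later cardinality bound: each $f_{p,q}$ has degree at most $\binom{n}{2}d^n$.
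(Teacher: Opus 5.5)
Your proof is correct and follows the paper's argument. You apply (a)$\Leftrightarrow$(d) of Proposition \ref{prop: vanishing from shuffle product} to $V_\zeta$, negate, and observe that a common root of $f_{1,n-1},\ldots,f_{n-1,1}$ is exactly a root of their gcd; your check that $f_{p,q}(0)\neq 0$ is the same observation the paper makes in Lemma \ref{lem: f}, and your degree bound $\binom{n}{2}d^n$ is weaker than the paper's $pq\,d^n$ but harmless here.
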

\begin{proof}
By Proposition \ref{prop: vanishing from shuffle product}, $V_\zeta\in \mathcal F_{V,N}$ if and only if for each $2\le n\le N$, there is some $p+q=n$ where $f_{p,q}(\bar\zeta)\neq 0$. The contrapositive says that $V_\zeta\in \mathcal F_V\setminus \mathcal F_{V,N}$ if and only if there is some $2\le n\le N$ where $f_{p,q}(\bar \zeta)=0$ for all $p+q=n$, and this is equivalent to the condition in the lemma.
\end{proof}

Hence, the proof will boil down to the following properties of $f_{p,q}(\bar \zeta)$.
\begin{lemma}\label{lem: f}
$f_{p,q}(\bar \zeta)$ is a nonzero polynomial in $\bar \zeta$ of degree $\le pqd^n$. Furthermore, if $V$ is defined over $K$, the coefficients of $f_{p,q}(\bar \zeta)$ are in the complex conjugate $c(K)$.
\end{lemma}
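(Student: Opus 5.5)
The plan is to write the matrix $M^\zeta_{p,q}$ explicitly as a polynomial in $\bar\zeta$ with matrix coefficients, and then read off all three claims: the degree bound, the field of definition of the coefficients, and nonvanishing, which comes from evaluating at $\bar\zeta=0$.

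\textbf{Step 1: expanding the shuffle product.} By definition,
$$\tau^\zeta_{p,q}=\sum_{\sigma\in \text{Sh}(p,q)} R^{-\bar\zeta}_\sigma ,$$
where $R^{-\bar\zeta}_\sigma$ is the action of the Matsumoto lift $\tilde\sigma$ on $(V^*_{-\bar\zeta})^{\otimes n}$. The lift $\tilde\sigma$ is a positive word in the generators $\sigma_i$ of length $\ell(\sigma)$, the Coxeter length. Each generator acts on $(V^*_{-\bar\zeta})^{\otimes n}$ by $-\bar\zeta$ times the untwisted braiding $R_{V^*}$ in that slot. Hence
$$R^{-\bar\zeta}_\sigma=(-\bar\zeta)^{\ell(\sigma)}R_\sigma,$$
where $R_\sigma$ is the untwisted operator on $V^{*\otimes n}$. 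For a $(p,q)$-shuffle, $\ell(\sigma)$ is the number of inversions, and this is at most $pq$.

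\textbf{Step 2: degree and coefficients.} In the word basis, the matrix of $R_\sigma$ has entries in $c(K)$, because the basis was chosen so that the braiding of $V^*$ is defined over $c(K)$. Therefore
$$M^\zeta_{p,q}=\sum_{\sigma}(-\bar\zeta)^{\ell(\sigma)}[R_\sigma]$$
has entries that are polynomials in $\bar\zeta$ of degree at most $pq$ with coefficients in $c(K)$. Its determinant $f_{p,q}$ is a sum of products of $d^n$ such entries. So $f_{p,q}$ has degree at most $pq\,d^n$, and its coefficients lie in $c(K)$.

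\textbf{Step 3: nonvanishing.} We treat $f_{p,q}$ as a formal polynomial and evaluate it at $\bar\zeta=0$. This is legitimate even though $\zeta=0$ is not an actual twist. At $\bar\zeta=0$, every term with $\ell(\sigma)>0$ vanishes. The only shuffle of length $0$ is the identity, whose lift is the trivial braid. Under the identifications $\mathfrak A_p\otimes\mathfrak A_q\cong V^{*\otimes n}\cong \mathfrak A_n$ (the comultiplication component is the identity), the identity shuffle contributes the identity matrix. Thus $M^0_{p,q}=\mathrm{Id}$, so $f_{p,q}(0)=1\neq 0$, and $f_{p,q}$ is a nonzero polynomial.

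\textbf{Main obstacle.} The only point requiring care is the bookkeeping in Step 1. One must check that the twist contributes exactly $(-\bar\zeta)^{\ell(\sigma)}$, using that $\tilde\sigma$ is a positive braid of length $\ell(\sigma)$. One must also check that the identity shuffle is the unique length-zero term, and that it contributes the identity matrix in the chosen bases.
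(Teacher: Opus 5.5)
Your proposal is correct and follows the paper's proof essentially step for step. Both arguments expand $\tau^\zeta_{p,q}$ as a sum over shuffles weighted by $\bar\zeta^{\inv(\sigma)}$, use $\inv(\sigma)\le pq$ for the degree bound, and use the basis defined over $c(K)$ for the coefficient field. Both get nonvanishing from the identity shuffle being the only constant term, so the constant coefficient of the determinant is $1$; your evaluation at $\bar\zeta=0$ is the same observation. The one cosmetic difference is that you pair $(-\bar\zeta)^{\ell(\sigma)}$ with the untwisted $R_{V^*,\sigma}$, while the paper pairs $\bar\zeta^{\inv(\sigma)}$ with $R_{V^*_{-1},\sigma}$, and these are equal.
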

\begin{proof}
Recall that the shuffle product can be written as
\begin{equation*}
\begin{split}
\tau_{p,q}^\zeta(v_{i_1}\cdots v_{i_p}\otimes v_{i_{p+1}}\cdots v_{i_n}) &= \sum_{\sigma\in \text{Sh}(p,q)} R_{V^*_{-\bar\zeta},\sigma}(v_{i_1}\cdots v_{i_p}\otimes v_{i_{p+1}}\cdots v_{i_n}) \\ 
&=\sum_{\sigma\in \text{Sh}(p,q)} \bar\zeta^{\inv(\sigma)}R_{V^*_{-1},\sigma}(v_{i_1}\cdots v_{i_p}\otimes v_{i_{p+1}}\cdots v_{i_n}),
\end{split}
\end{equation*}
where $\inv(\sigma)$ denotes the number of inversions in the shuffle $\sigma$. From this, we see that constant terms only appear when $\inv(\sigma)=0$, i.e. $\sigma=\id$. In this case, $R_{V^*_{-1},\id}(v_{i_1}\cdots v_{i_p}\otimes v_{i_{p+1}}\cdots v_{i_n})=v_{i_1}\cdots v_{i_n}$, so the only constant terms in the matrix $M^\zeta_{p,q}$ are the $1$s in the diagonal, so $f_{p,q}(\bar \zeta)$ has constant coefficient $1$ and thus is nonzero. 

The degree of each entry of $M^\zeta_{p,q}$ as a polynomial in $\bar \zeta$ is at most $pq$ because $\inv(\sigma)\le pq$ with equality achieved by the shuffle which sends the first $p$ entries to the last $p$ entries, and the last $q$ entries to the first $q$. This implies that the determinant has degree at most $pqd^m$. Lastly, if the basis is defined over $c(K)$, then each $R_{V^*_{-1},\sigma}(v_{i_1}\cdots v_{i_p}\otimes v_{i_{p+1}}\cdots v_{i_n})$ is a $c(K)$-linear combination of words, so each entry of $M^\zeta_{p,q}$ is a polynomial in $\bar \zeta$ with coefficients in $c(K)$, hence the same is true for $f_{p,q}(\bar \zeta)$.
\end{proof}

We can now complete the proof. The first statement follows from Lemma \ref{lem: P_N} because $\deg(P_N)\le \sum_{n=2}^N\deg(f_{1,{n-1}})\le \sum_{n=2}^N (n-1)d^n< N^2d^N$ by Lemma \ref{lem: f}, and it also follows immediately that the set $\mathcal F_V\setminus \mathcal F_{V,N}$ is explicitly computable. Lemma \ref{lem: P_N} reduces the second statement to showing that if $[K(\zeta)\colon K]=[c(K)(\bar\zeta)\colon c(K)]>(N-1)d^N$, then $\bar\zeta$ is not a root of $P(\bar\zeta)$. Indeed, if $\bar\zeta$ were a root of $P(\bar\zeta)$, then it must be a root of some $f_{1,n-1}(\bar\zeta)$. Since this has coefficients in $c(K)$, we have $[c(K)(\bar\zeta)\colon c(K)]\le \deg(f_{1,n-1})\le (n-1)d^n\le (N-1)d^N$ by Lemma \ref{lem: f}, yielding a contradiction.
\end{proof}
\subsection{Examples}\label{sec: computation examples}
We now demonstrate how to compute the set of exceptions $\mathcal F_V\setminus \mathcal F_{V,N}$ for two examples, which are relevant to the arithmetic situations of character sums over Galois $G$-extensions and Gauss sums respectively.
\begin{example}\label{eg: computation S3}
Let $G=S_3$ and $R=\{a,b,c\}$ be the rack of conjugacy class of transpositions where $a=(12),b=(13),c=(23)$, and let $V=\C R$ be the corresponding braided vector space of rack-type. One can check that the dual braided vector space $V^*$ has the same braiding as $V$, so $V^*_{-\bar\zeta}$ has braiding given by $R_{V^*_{-\bar\zeta}}(a\otimes b)=-\bar\zeta \ b\otimes a^b$. We write the matrix $M^\zeta_{1,1}$ with basis in lexicographical order $aa,ab,\ldots, cc$ as follows:
\begin{equation*}\tiny
M^\zeta_{1,1}=
\begin{pmatrix}
1-\bar\zeta & 0 & 0 & 0 & 0 & 0 & 0 & 0 & 0 \\
0 & 1 & 0 & 0 & 0 & 0 & -\bar\zeta & 0 & 0 \\
0 & 0 & 1 & -\bar\zeta & 0 & 0 & 0 & 0 & 0 \\
0 & 0 & 0 & 1 & 0 & 0 & 0 & -\bar\zeta & 0 \\
0 & 0 & 0 & 0 & 1-\bar\zeta & 0 & 0 & 0 & 0 \\
0 & -\bar\zeta & 0 & 0 & 0 & 1 & 0 & 0 & 0 \\
0 & 0 & 0 & 0 & 0 & -\bar\zeta & 1 & 0 & 0 \\
0 & 0 & -\bar\zeta & 0 & 0 & 0 & 0 & 1 & 0 \\
0 & 0 & 0 & 0 & 0 & 0 & 0 & 0 & 1-\bar\zeta
\end{pmatrix}.
\end{equation*}
We can decompose this into two $3\times 3$ cyclic blocks and three $1\times 1$ blocks of $(1-\bar\zeta)$, so we can compute the determinant to be
$$P_2(\bar\zeta)=f_{1,1}(\bar\zeta)=(1-\bar\zeta)^5(1+\bar\zeta+\bar\zeta^2)^2=-\Phi_1(\bar\zeta)^5\Phi_3(\bar\zeta)^2,$$
where $\Phi_n(x)$ denotes the $n$-th cyclotomic polynomial. 

Hence, by Lemma \ref{lem: P_N}, we have that $\mathcal F_V\setminus \mathcal F_{V,2}=\{V,V_{\zeta_3},V_{\zeta_3^2}\}$ consists of exactly the twists with $\ord(\zeta)=1,3$. Theorem \ref{thm: main extrapolating vanishing} then tells us that all other twists apart from these are guaranteed to have vanishing slope least $\frac 1 3$. For these three exceptions, $V$ exhibits homological stability from \cite{EVW16} and thus has no homological vanishing of any slope. For the other two twists with $\ord(\zeta)=3$, we can compute the cohomology for small $n$ which proves a vanishing slope of at least $\frac 1 2$, as seen in Figure \ref{fig: S_3 trans 3rd root}. Thus, we see that being in $\mathcal F_{V,N}$ is not a necessary condition for a twist to vanish with slope $\frac{N-1}{N+1}$.
\begin{figure}[h]
    \centering
\begin{tikzpicture}[x=1cm,y=1cm,scale=0.5]
\drawnaxis{9}
\drawiaxis{8}
\drawstaircase{1}{9}{0}{black!10}{0}
\drawstaircase{4}{7}{0}{red!40}{0}
\drawblock{8}{9}{0}{3}{blue!40}{0}
\foreach \n/\i/\txt in {
1/0/3,
2/0/2,
2/1/2,
3/1/3,
3/2/3,
4/1/1,
4/2/6,
4/3/5,
6/3/1,
6/4/28,
6/5/27,
7/4/15,
7/5/51,
7/6/36,
8/4/2,
8/5/12,
8/6/44,
8/7/34,
9/6/15
 }{
    \cell{\n}{\i}{\txt}
  }
\foreach \n/\i/\txt in {
9/7/159,
9/8/144
 }{
    \smallcell{\n}{\i}{\txt}
  }
\end{tikzpicture}
\caption{Homology of $(\C R)_{\zeta_3}$ for $R$ the conjugacy class of transpositions in $S_3$.}\label{fig: S_3 trans 3rd root}
\end{figure}

We can compute the exceptions $\mathcal F_V\setminus \mathcal F_{V,N}$ for higher $N$ by computing the polynomials $f_{p,q}(\bar\zeta)$. For example, we have the following polynomials 

\begin{equation*}
\begin{split}
f_{1,2}(\bar\zeta)&=-\Phi_1^{12}\Phi_3^9\Phi_6^6\Phi_{12}^3,\\
f_{1,3}(\bar\zeta)&=-\Phi_1^{43}\Phi_2^2\Phi_3^{31}\Phi_4^7\Phi_6^{17}\Phi_{9}^2\Phi_{12}^{13}\Phi_{36}^2,\\
f_{1,4}(\bar\zeta)&=-\Phi_1^{132}\Phi_2^{12}\Phi_3^{93}\Phi_4^{24}\Phi_5^6\Phi_6^{51}\Phi_8^6\Phi_9^6\Phi_{10}^9\Phi_{12}^{39}\Phi_{20}^6\Phi_{36}^6\Phi_{40}^6,\\
f_{1,5}(\bar\zeta)&=-\Phi_1^{419}\Phi_2^{42}\Phi_3^{305}\Phi_4^{72}\Phi_5^{36}\Phi_6^{165}\Phi_8^{18}\Phi_9^{28}\Phi_{10}^{28}\Phi_{12}^{117}\Phi_{15}^{21}\Phi_{18}^4\Phi_{20}^{18}\Phi_{30}^7\Phi_{36}^{18}\Phi_{40}^{18}\Phi_{45}^8\Phi_{90}^2,
\end{split}
\end{equation*}
where we write $\Phi_n=\Phi_n(\bar\zeta)$. We omit the other polynomials like $f_{2,1},f_{2,2},\ldots$, as we observe from computation that $f_{p,q}$ have exactly the same roots as $f_{1,p+q-1}$ for $p+q\le 6$. Indeed, this is not hard to show using Proposition \ref{prop: vanishing from shuffle product} (c) and (d) given the observation that the roots of $f_{1,N}$ include the roots of $f_{1,n}$ for $n<N$. 

We also make an interesting observation that $f_{1,m-1}(\bar\zeta)$ are all Kronecker polynomials, i.e. product of cyclotomic polynomials, which means that the exceptions $\mathcal F_V\setminus \mathcal F_{V,N}$ are all twists by roots of unity $\zeta$. Indeed, we will prove that this is true more generally for the finite monodromy case in Section \ref{sec: finite monodromy etc.}.

This gives us the exceptional twists which we present in Figure \ref{fig: S_3 twist exceptions computation}. Since all the exceptional $\zeta$ are Galois orbits of roots of unity, we simply record the order $\ord(\zeta)$. We also write record the corresponding power-savings exponent of the corresponding arithmetic sum as in Theorem \ref{thm: character G extensions}. One could then compute the vanishing for each of these exceptional twists individually.
\begin{figure}[h]
\centering
\begin{tabular}{>{\centering\arraybackslash}m{1cm}
>{\centering\arraybackslash}m{2cm}
>{\centering\arraybackslash}m{2cm}
>{\centering\arraybackslash}m{9cm}}
\toprule
$\mathbf{N}$ & \textbf{Vanishing slope} & \textbf{Exponent} & \textbf{With possible exceptions } $\mathbf{\ord(\zeta)}$ \\
\midrule
$2$ & $\frac 1 3$ & $\frac 5 6$& $1,3$\\
\midrule
$3$ & $\frac 1 2$ & $\frac 3 4$ & $1,3,6,12$\\
\midrule
$4$ & $\frac 3 5$ & $\frac 7 {10}$ & $1,2,3,4,6,9,12,36$\\
\midrule
$5$ & $\frac 2 3$ & $\frac 2 3$ & $1,2,3,4,5,6,8,9,10,12,20,36,40$\\
\midrule
$6$ & $\frac 5 7$ & $\frac 9 {14}$ & $1,2,3,4,5,6,8,9,10,12,15,18,20,30,36,40,45,90$\\
\bottomrule
\end{tabular}
\caption{Table of orders of exceptional twists $\mathcal F_V\setminus \mathcal F_{V,N}$ for $V=\C R$ where $R$ is the rack of transpositions in $S_3$.}\label{fig: S_3 twist exceptions computation}
\end{figure}

We notice that the exceptional twists $V_\zeta\in \mathcal F_V\setminus \mathcal F_{V,N}$ have orders that are quite small, in fact, they always divide $3n(n-1)$ for some $n\le N$ (however, the converse is clearly not true). We will see that this is true more generally for racks coming from groups in Proposition \ref{prop: additional conditions twist}.
\end{example}
\begin{example}\label{eg: computation C_wedge}
Let $V=\C_\wedge$, and again we check that the dual vector space $V^*$ has the same braiding so $V^*_{-\bar\zeta}$ has braiding $R(v_i\otimes v_j)=-(-1)^{ij} \bar\zeta v_j\otimes v_i$. Using the lexicographical ordering, we have, for example, the two matrices
\begin{equation*}\tiny
M_{1,1}^\zeta=
\begin{pmatrix}
1-\bar\zeta & 0 & 0 & 0\\
0 & 1 & -\bar\zeta & 0\\
0 & -\bar\zeta & 1 & 0\\
0 & 0 & 0 & 1+\bar\zeta
\end{pmatrix},\ 
M_{1,2}^\zeta=
\begin{pmatrix}
1-\bar\zeta+\bar\zeta^2 & 0 & 0 & 0 & 0 & 0 & 0 & 0  \\
0 & 1- \bar\zeta & 0 & 0 & \bar\zeta^2 & 0 & 0 & 0\\
0 & \bar\zeta^2 & 1 & 0 & -\bar\zeta & 0 & 0 & 0\\
0 & 0 & 0 & 1 & 0 & -\bar\zeta -\bar\zeta^2 & 0 & 0\\
0 & 0 & -\bar\zeta +\bar\zeta^2 & 0 & 1 & 0 & 0 & 0\\
0 & 0 & 0 & -\bar\zeta & 0 & 1 & -\bar\zeta^2 & 0\\
0 & 0 & 0 & \bar\zeta^2 & 0 & 0 & 1+\bar\zeta & 0\\
0 & 0 & 0 & 0 & 0 & 0 & 0 & 1+\bar\zeta+\bar\zeta^2
\end{pmatrix}.
\end{equation*}
\end{example}
One can compute the following polynomials

\begin{equation*}
\begin{split}
f_{1,1}(\bar\zeta)&= \Phi_1^2\Phi_2^2,\\
f_{1,2}(\bar\zeta)&=\Phi_1^4\Phi_2^4\Phi_3^2\Phi_6^2,\\
f_{1,3}(\bar\zeta)&=\Phi_1^{12}\Phi_2^{12}\Phi_3^4\Phi_4^4\Phi_6^4,\\
f_{1,4}(\bar\zeta)&= \Phi_1^{24}\Phi_2^{24}\Phi_3^8\Phi_4^8\Phi_5^2\Phi_6^8\Phi_{10}^2\Phi_{20}^2,\\
f_{1,5}(\bar\zeta)&=\Phi_1^{52}\Phi_2^{52}\Phi_3^{22}\Phi_4^{16}\Phi_5^{4}\Phi_6^{22}\Phi_{10}^4 \Phi_{15}^2\Phi_{20}^4\Phi_{30}^2,\\
f_{1,6}(\bar\zeta)&= \Phi_1^{108}\Phi_2^{108}\Phi_3^{44}\Phi_4^{32}\Phi_5^8\Phi_6^{44}\Phi_7^6\Phi_{10}^8\Phi_{14}^6\Phi_{15}^4\Phi_{20}^8\Phi_{21}^2\Phi_{30}^4\Phi_{42}^2,\\
f_{1,7}(\bar\zeta)&= \Phi_1^{224}\Phi_2^{224}\Phi_3^{88}\Phi_4^{72}\Phi_5^{16}\Phi_6^{88}\Phi_7^{16}\Phi_8^8\Phi_{10}^{16}\Phi_{14}^{16}\Phi_{15}^8\Phi_{20}^{16}\Phi_{21}^4\Phi_{28}^4\Phi_{30}^8\Phi_{42}^4\Phi_{56}^4,\\
f_{1,8}(\bar\zeta)&= \Phi_1^{452}\Phi_2^{452}\Phi_3^{182}\Phi_4^{148}\Phi_5^{32}\Phi_6^{182}\Phi_7^{32}\Phi_8^{20}\Phi_9^8\Phi_{10}^{32}\Phi_{12}^6\Phi_{14}^{32}\Phi_{15}^{16}\Phi_{18}^8\Phi_{20}^{32}\Phi_{21}^8\Phi_{24}^6\Phi_{28}^8\Phi_{30}^{16}\Phi_{36}^8\Phi_{42}^8\Phi_{56}^8\Phi_{72}^8,
\end{split}
\end{equation*}
which gives us the table of of exceptional orders in $\mathcal F_V\setminus \mathcal F_{V,N}$ in Figure \ref{fig: C_wedge exceptions computation}. 
\begin{figure}[h]
\centering
\begin{tabular}{>{\centering\arraybackslash}m{1cm}
>{\centering\arraybackslash}m{2cm}
>{\centering\arraybackslash}m{2cm}
>{\centering\arraybackslash}m{9cm}}
\toprule
$\mathbf{N}$ & \textbf{Vanishing slope} & \textbf{Exponent} & \textbf{With possible exceptions } $\mathbf{\ord(\zeta)}$ \\
\midrule
$2$ & $\frac 1 3$ & $\frac 5 6$& $1,2$\\
\midrule
$3$ & $\frac 1 2$ & $\frac 3 4$ & $1,2,3,6$\\
\midrule
$4$ & $\frac 3 5$ & $\frac 7 {10}$ & $1,2,3,4,6$\\
\midrule
$5$ & $\frac 2 3$ & $\frac 2 3$ & $1,2,3,4,5,6,10,20$\\
\midrule
$6$ & $\frac 5 7$ & $\frac 9 {14}$ & $1,2,3,4,5,6,10,15,20,30$\\
\midrule
$7$ & $\frac 3 4$ & $\frac 5 8$ & $1,2,3,4,5,6,7,10,14,15,20,21,30,42$\\
\midrule
$8$ & $\frac 7 9$ & $\frac {11} {18}$ & $1,2,3,4,5,6,7,8,10,14,15,20,21,28,30,42,56$\\
\midrule
$9$ & $\frac 4 5$ & $\frac 3 5$ & $1,2,3,4,5,6,7,8,9,10,12,14,15,$ $18,20,21,24,28,30,36,42,56,72$\\
\bottomrule
\end{tabular}
\caption{Table of orders of exceptional twists $\mathcal F_V\setminus \mathcal F_{V,N}$ for $V=\C_\wedge$.}\label{fig: C_wedge exceptions computation}
\end{figure}

We observe from our computation that the exceptional twists $V_\zeta\in \mathcal F_V\setminus \mathcal F_{V,N}$ are essentially those whose orders divide some $n(n-1)$ for some $n\le N$. This is a phenomenon of $S_n$-representations, as we will see in Proposition \ref{prop: additional conditions twist}. There is, however, one exception to this when $N=4$, the twist $V_\zeta$ with $\ord(\chi)=12$ is actually in $\mathcal F_{V,4}$. In fact, $V_\zeta$ lies in $\mathcal F_{V,N}$ for all $N\le 8$, but not in $\mathcal F_{V,9}$. We will give a more in-depth analysis of this exception in Section \ref{sec: explain zeta_12}.
\section{Special cases and extensions for twists}\label{sec: special cases and extensions}

In this section, we discuss several variants and extensions of the general result, arising under stronger hypotheses on $V$. In Section \ref{sec: finite monodromy etc.}, we study three special cases: finite monodromy, racks coming from groups, and $S_n$-representations. We discuss an alternate approach to $S_n$-representations via determinantal identities in Section \ref{sec: aside}. Finally, in Section \ref{sec: multicolored case}, we discuss the multicolored generalization where $V$ is the sum of multiple braided vector spaces and we can introduce twists between each pair of them. 
\subsection{Finite monodromy, racks and $S_n$-representations}\label{sec: finite monodromy etc.} We will consider the following three additional conditions on $V$. 
\begin{enumerate}[(\alph*)]
\item \textbf{Finite monodromy.} For each $n$, the action of $B_n$ on $V^{\otimes n}$ factors through a finite quotient $H_n$ of $B_n$. This is typical in most arithmetic settings.
\item \textbf{Racks from groups.} We have $V=\C R$, where $R$ is a union of conjugacy classes of a group $G$. This is relevant to the case of Hurwitz spaces.
\item \textbf{$S_n$-representations.} For each $n$, the action of $B_n$ on $V^{\otimes n}$ factors through the symmetric group $S_n$. This is relevant to $\C_\wedge$ and irreducibility of polynomials.
\end{enumerate}

Here, (a) is the most general, while (b) and (c) are special cases of (a). That (c) is a sub-case of (a) is obvious by taking $H_n=S_n$. For (b), $\C R$ has finite monodromy because the action of $B_n$ factors through the symmetric group $S_{|R|^n}$, this is because $B_n$ acts on the set $R^n$ of words $v_1\cdots v_n$ with $v_i\in R$, and if it acts by the identity permutation on $R^n$, then it is clear that it also acts by identity on $(\C R)^{\otimes n}$, since $R^n$ is a basis of this vector space. As in both cases above, it is typical that $|H_n|$ grows exponentially or super-exponentially.

We obtain results in increasing strength from (a) to (c).
\begin{proposition}\label{prop: additional conditions twist}
Let $n\ge 2$ be an integer. In each of the three cases above, we have that $H_i(B_n,V_\zeta^{\otimes n})=0$ for all $i$, unless $\zeta$ is a root of unity and
\begin{enumerate}[(\alph*)]
    \item $\ord(\zeta)$ divides $n(n-1)|H_n|$ for $n\ge 3$, $\ord(\zeta)$ divides $|H_n|$ for $n=2$,
    \item $\ord(\zeta)$ divides $n(n-1)|G|$,
    \item $\ord(\zeta)$ divides $n(n-1)$,
\end{enumerate}
for each corresponding case.
\end{proposition}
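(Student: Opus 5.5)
The plan is to apply Proposition \ref{prop: vanishing from fundamental braid} to a suitable power of the fundamental braid. If we can exhibit a central element $\Delta_n^k$ (with $k$ even when $n\ge 3$) acting on $V_\zeta^{\otimes n}$ by a scalar $\lambda\neq 1$, then all homology vanishes. So we must show that, outside the stated exceptions, such a central element exists.

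The first step is to compare the actions of $B_n$ on $V_\zeta^{\otimes n}$ and on $V^{\otimes n}$. Since $R_\zeta=\zeta R$, each generator $\sigma_i$ acts on $V_\zeta^{\otimes n}$ as $\zeta$ times its action on $V^{\otimes n}$. Hence any braid $\beta$ acts by $\zeta^{e(\beta)}\rho(\beta)$, where $e\colon B_n\to\Z$ is the abelianization (exponent sum) and $\rho$ is the untwisted representation. We have $e(\Delta_n)=n(n-1)/2$, so $e(\Delta_n^2)=n(n-1)$.

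\emph{Case (a).} Here $\rho$ factors through $H_n$. For $n\ge3$, take $k=2|H_n|$. Then $\rho(\Delta_n^{k})=\rho(\Delta_n^2)^{|H_n|}=1$, since the image of $\Delta_n^2$ in $H_n$ has order dividing $|H_n|$. Consequently $\Delta_n^{k}$ acts on $V_\zeta^{\otimes n}$ by the scalar $\zeta^{n(n-1)|H_n|}$. This scalar differs from $1$ unless $\zeta$ is a root of unity with $\ord(\zeta)\mid n(n-1)|H_n|$. For $n=2$, take $k=|H_2|$ and use $\Delta_2=\sigma_1$ with $e=1$, which gives the scalar $\zeta^{|H_2|}$. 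Thus the exceptional set is exactly the one stated in (a).

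\emph{Case (c).} Here $\rho$ factors through $S_n$, and $\Delta_n^2$ maps to the identity permutation in $S_n$. So $\Delta_n^2$ already acts on $V_\zeta^{\otimes n}$ by the scalar $\zeta^{n(n-1)}$. For $n=2$ we need more care: $\Delta_2^2=\sigma_1^2$ maps to the identity and acts by $\zeta^2$. This only yields $\ord(\zeta)\mid 2=n(n-1)$, which is what we want. So in both cases we simply take $k=2$.

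\emph{Case (b).} This is the main obstacle, since we need a bound in terms of $|G|$ rather than $|H_n|$. The plan is to compute $\Delta_n^2$ on basis words directly. The full twist $\Delta_n^2$ acts on a word $(g_1,\ldots,g_n)\in R^n$ by simultaneous conjugation by the total product $g=g_1\cdots g_n$ (or its inverse, depending on conventions). This is the standard Hurwitz-action fact, and we would verify it by induction using $\Delta_n^2=\Delta_{n-1}^2(\sigma_{n-1}\cdots\sigma_1\sigma_1\cdots\sigma_{n-1})$. Since $g^{|G|}=1$, the element $\Delta_n^{2|G|}$ acts trivially on every basis word, so it acts on $V_\zeta^{\otimes n}$ by $\zeta^{n(n-1)|G|}$. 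The case $n=2$ is covered the same way by $\Delta_2^{2|G|}$, whose exponent sum is $2|G|=n(n-1)|G|$. Proposition \ref{prop: vanishing from fundamental braid} then finishes all three cases.
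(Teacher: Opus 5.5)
Your proposal is correct and follows the paper's proof. The paper likewise applies Proposition~\ref{prop: vanishing from fundamental braid} with $k=2|H_n|$ (or $k=|H_2|$ when $n=2$), $k=2|G|$, and $k=2$, uses that the twist rescales the action of $\Delta_n^k$ by $\zeta^{kn(n-1)/2}$, and shows that $\Delta_n^2$ acts on $R^n$ by conjugation by the global monodromy $g_1\cdots g_n$. The only difference is cosmetic: the paper checks this last fact by computing the action of $\Delta_n$ explicitly and squaring, whereas you induct via $\Delta_n^2=\Delta_{n-1}^2\,\sigma_{n-1}\cdots\sigma_1\sigma_1\cdots\sigma_{n-1}$. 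Your induction works equally well, since either order of application gives conjugation by $g$, and invariance of the total product under the braid action lets you iterate up to $g^{|G|}=1$.
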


Let us first compare this with the results in the general case in the previous section. When we have one of these additional assumptions (a), (b) or (c), then $V_\zeta
\in \mathcal F_V\setminus \mathcal F_{V,N}$ only if $\zeta$ is a root of unity and $\ord(\zeta)$ satisfies the corresponding divisibility condition for some $n\le N$. For (b) and (c), there are a lot less of these exceptional twists, and this also explains our observations in Section \ref{sec: computation examples}. Hence, we can obtain stronger vanishing results by again applying Theorem \ref{thm: main extrapolating vanishing} to the vanishing $(N,N-2)$ staircase.

On the other hand, we can also obtain vanishing results by applying Corollary \ref{cor: density} which tells us that the slope of vanishing is greater than or equals to the density $\rho$ of $n$ where homology vanishes. This can be deduced explicitly for the cases of (b) and (c).
\begin{corollary}\label{cor: density for case (b), (c)}
Let $V$ be in either case (b) or (c) above. Let $\zeta$ be a root of unity of order $o$. Let $o'=o/\gcd(o,|G|)$ for (b) and $o'=o$ for (c). Then, $H_i(B_n,V_\zeta^{\otimes n})$ vanishes with slope $1-\frac{2^{\omega(o')}}{o'}$. 
\end{corollary}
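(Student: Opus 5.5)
The plan is to combine Proposition \ref{prop: additional conditions twist} with Corollary \ref{cor: density}. By the proposition, in case (c), homology can be nonzero for $n\ge 2$ only if $o\mid n(n-1)$. In case (b), it can be nonzero only if $o\mid n(n-1)|G|$, which forces $o'\mid n(n-1)$ with $o'=o/\gcd(o,|G|)$. Thus in both cases the set $S$ of Corollary \ref{cor: density} satisfies
$$S\subseteq\{1\}\cup\{n\ge 2: o'\mid n(n-1)\}.$$
The task is therefore to count the residues $n \bmod o'$ with $o'\mid n(n-1)$, and then to convert that density into a vanishing line.

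\textbf{Counting.} Write $o'=\prod p_j^{e_j}$. Since $n$ and $n-1$ are coprime, $p_j^{e_j}\mid n(n-1)$ if and only if $n\equiv 0$ or $n\equiv 1 \pmod{p_j^{e_j}}$. By CRT there are exactly $2^{\omega(o')}$ residues mod $o'$ satisfying $o'\mid n(n-1)$. (If $o'=1$ the claimed slope is $1-2<0$, which is vacuous.)

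\textbf{Converting to a vanishing line.} Corollary \ref{cor: density} says that if $n=n_j$ is the $j$-th element of $S$, then $H_i=0$ for $i\le n-j-1$. For $n\notin S$, all homology vanishes. For $n\in S$ we have
$$j\le \#\{m\le n: o'\mid m(m-1)\}+1\le \frac{2^{\omega(o')}}{o'}\,n+2^{\omega(o')}+1.$$
Hence $H_i=0$ whenever $i\le \left(1-\frac{2^{\omega(o')}}{o'}\right)n-C$ for a constant $C$ depending only on $o'$. This is vanishing with slope $1-\frac{2^{\omega(o')}}{o'}$ in the sense of the introduction, since there we allow an additive constant $t$.

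\textbf{Main obstacle.} Everything above is bookkeeping except the divisibility input in case (b). There I need to check that the proposition's condition $o\mid n(n-1)|G|$ really implies $o'\mid n(n-1)$. It does: $o\mid n(n-1)|G|$ gives $\frac{o}{\gcd(o,|G|)}\mid n(n-1)\frac{|G|}{\gcd(o,|G|)}$, and $o'$ is coprime to $\frac{|G|}{\gcd(o,|G|)}$, so $o'\mid n(n-1)$.

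The remaining care is to note that $n=1$ always lies in $S$, which only shifts the constant. I would state the explicit constant, roughly $C=2^{\omega(o')}+2$, for use later in Theorem \ref{thm: character G extensions}.
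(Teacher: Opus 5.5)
Your proposal is correct and follows the paper's proof. Like the paper, you restrict the support set $S$ via Proposition \ref{prop: additional conditions twist} (reducing $o\mid n(n-1)|G|$ to $o'\mid n(n-1)$), count the $2^{\omega(o')}$ admissible residues by CRT, and apply Corollary \ref{cor: density}; your explicit density-to-line bookkeeping fills in what the paper leaves as ``follows by applying Corollary \ref{cor: density}''. One trivial slip: for $o'=1$ you have $2^{\omega(1)}=1$, so the slope is $0$ rather than $-1$, which is still vacuous and is in any case already covered by your general count.
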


As discussed after Corollary \ref{cor: density}, this bound on the slope is often not optimal for most $o$. However, it is a strong uniform statement, which is why we use this for the results stated in the introduction on Gauss sums and character sums. We also remark that for our application to Möbius functions, the relevant braided vector space is twisted by $-1$ which has order $o=2$, and this corollary does not give any vanishing in that case.

\subsubsection{Proofs} 
We now prove Proposition \ref{prop: additional conditions twist} and Corollary \ref{cor: density for case (b), (c)}. Instead of looking at isomorphism of shuffle products like in the previous section, we will instead deduce results from the action of the fundamental braid $\Delta_n$ as in Proposition \ref{prop: vanishing from fundamental braid}. The following lemma on $\Delta_n$ is key.
\begin{lemma}\label{lem: fundamental braid acts trivially}
Let $n\ge 2$ be an integer. In each of the three cases above, we have that $$\begin{cases}
\Delta_n^{|H_n|} & \text{for (a)}\\
\Delta_n^{2|G|} & \text{for (b)}\\
\Delta_n^{2} & \text{for (c)}\\
\end{cases}$$
acts trivially on $V^{\otimes n}$.
\end{lemma}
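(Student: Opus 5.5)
Case (a) is immediate: the action of $B_n$ on $V^{\otimes n}$ factors through the finite group $H_n$, so the image of $\Delta_n$ has order dividing $|H_n|$ by Lagrange's theorem. Hence $\Delta_n^{|H_n|}$ maps to the identity of $H_n$ and acts trivially.

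For (c), recall that the Garside element $\Delta_n$ maps to the longest permutation $w_0\in S_n$ under $B_n\twoheadrightarrow S_n$. Since $w_0$ is an involution, $\Delta_n^2$ lies in the pure braid group, which is the kernel of $B_n\to S_n$. By assumption the action factors through $S_n$, so $\Delta_n^2$ acts trivially.

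For (b), I would use the explicit description $\Delta_n^2=(\sigma_1\cdots\sigma_{n-1})^n$. First compute the action of $c=\sigma_1\cdots\sigma_{n-1}$ on a basis word $g_1\otimes\cdots\otimes g_n\in R^n$ using $R(a\otimes b)=b\otimes a^b$. With the convention that $\sigma_i$ acts on positions $i,i+1$, applying $\sigma_{n-1}$ first and then moving left (or the reverse, depending on the order of composition), $c$ moves one letter to the other end and conjugates the remaining letters by it. The key step is to show by induction that the pure braid $\Delta_n^2$, which is central, sends $g_1\cdots g_n$ to $g_1^{x}\cdots g_n^{x}$, where $x$ is the total product $g_1g_2\cdots g_n$ (possibly up to inversion, depending on conventions). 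I would check this directly: applying $c$ $n$ times cycles each letter back to its original position, and each letter gets conjugated successively by all the letters, which assembles into conjugation by the full product. The total product $g_1\cdots g_n$ is invariant under the braid action, since $ab = b\,(b^{-1}ab)$, so $x$ is the same at every stage of the iteration. Then $\Delta_n^{2k}$ acts by conjugation by $x^k$, and since $x^{|G|}=1$, $\Delta_n^{2|G|}$ acts trivially.

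The main obstacle is the bookkeeping in (b): getting the conventions right so that the $n$-fold iterate of $c$ really gives uniform simultaneous conjugation by the invariant product. Here I would try $n=2$ and $n=3$ explicitly first to fix the pattern, then do the induction on the number of applications of $c$, tracking that after $j$ steps the word is a cyclic rotation of the original with each letter conjugated by the appropriate partial product. As a fallback, one can use the standard fact that the full twist corresponds to the boundary Dehn twist, which acts on Hurwitz tuples by global conjugation by the boundary monodromy $g_1\cdots g_n$.
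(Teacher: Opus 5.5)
Your proposal is correct and follows essentially the same route as the paper. Parts (a) and (c) are the same Lagrange and $S_n$-factorization arguments, and for (b) both show that $\Delta_n^2$ acts on the basis $R^n$ by conjugating every entry by the braid-invariant global monodromy $g_1\cdots g_n$, so $\Delta_n^{2|G|}$ acts trivially. The only difference is bookkeeping: you iterate $c=\sigma_1\cdots\sigma_{n-1}$ $n$ times, tracking after $j$ steps a cyclic rotation with every entry conjugated by a partial product (this checks out), whereas the paper computes the half twist $\Delta_n\colon(g_1,\ldots,g_n)\mapsto(g_n,g_{n-1}^{g_n},\ldots,g_1^{g_2\cdots g_n})$ and then squares it.
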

\begin{proof}
The case of $(a)$ follows from Lagrange's theorem which implies that the image of $\Delta_n^{|H_n|}$ is trivial in $H_n$ so it acts by the identity by definition of finite monodromy. Similarly, $(c)$ follows as the image of $\Delta_n^2$ is the identity in $S_n$.

The case of $(b)$ is more involved. We need to analyze the action of the fundamental braid on $R^n$ which is a basis for $(\C R)^n$. Recall from Example \ref{eg: BVS}(b) that the action of $B_n$ on $R^n$ is defined by the generators $\sigma_i$ sending $(g_1,\ldots, g_i,g_{i+1},\ldots, g_n)$ to $(g_1,\ldots, g_{i+1},g_i^{g_{i+1}},\ldots, g_n)$ where $g^h=h^{-1}gh$. Since $\Delta_n=(\sigma_1\cdots\sigma_{n-1})\cdots(\sigma_1\sigma_2)(\sigma_1)$, so we need to apply the sequence of braids $w_1=\sigma_1, w_2=\sigma_1\sigma_2,\ldots, w_{n-1}=\sigma_1\sigma_2\ldots \sigma_{n-1}$ in this order. Starting with $(g_1,\ldots, g_n)$, it is easy to see by induction that applying $w_1$ to $w_i$ in sequence will give
$$(g_{i+1},g_i^{g_{i+1}},g_{i-1}^{g_ig_{i+1}},\ldots, g_1^{g_2\cdots g_{i+1}},g_{i+2},\ldots, g_n)$$
so applying $\Delta_n$ sends
$$(g_1,\ldots, g_n)\mapsto (g_n,g_{n-1}^{g_n},\ldots, g_1^{g_2\cdots g_n}).$$
Applying the fundamental braid twice, the $i$-th term would then be
$$\left(g_i^{g_{i+1}\cdots g_n}\right)^{\left(g_{i-1}^{g_i\cdots g_n}\cdots g_1^{g_2\cdots g_n}\right)}=g_i^{g_ig_{i+1}\cdots g_ng_{i-1}^{g_i\cdots g_n}\cdots g_1^{g_2\cdots g_n}},$$
where in the second term we added a $g_i$ at the front of the exponent because $g_i^{g_i}=g_i$. By expanding the conjugated terms, the exponent of the second term can be written as
$$(g_i\cdots g_n)(g_n^{-1}\cdots g_i^{-1})(g_{i-1}\cdots g_n)(g_n^{-1}\cdots g_{i-1}^{-1})\cdots (g_n^{-1}\cdots g_2^{-1})(g_1\cdots g_n).$$
We see that the $2j$-th and $(2j+1)$-th brackets cancel for $0\le j\le i-2$, yielding just $g_1\cdots g_n$. Hence, applying $\Delta_n^2$ sends
$$(g_1,\ldots, g_n)\mapsto (g_1^g,\ldots, g_n^g)$$
where $g=g_1\cdots g_n$ is the global monodromy. Thus, $\Delta_n^{2|G|}$ acts by the identity on $R^n$ (and hence acts trivially on $(\C R)^n$) because it is given by conjugating each term by $g^{|G|}$, which is the identity by Lagrange's theorem.
\end{proof}

The proposition now follows from a direct application of Proposition \ref{prop: vanishing from fundamental braid}.
\begin{proof}[Proof of Proposition \ref{prop: additional conditions twist}] Choose $k=2|H_n|$ if $n\ge 3$ and $k=|H_2|$ if $n=2$ for case (a), $k=2|G|$ for (b) and $k=2$ for (c). Lemma \ref{lem: fundamental braid acts trivially} implies that $\Delta_n^k$ act trivially on $V^{\otimes n}$, so $\Delta_n^k$ will act by $\zeta^{k \inv(\Delta_n)}=\zeta^{kn(n-1)/2}$ on $V_\zeta^{\otimes n}$. Hence, applying Proposition \ref{prop: vanishing from fundamental braid} tells us that if $\zeta^{kn(n-1)}\neq 1$, we must have $H_i(B_n,V_\zeta^{\otimes n})=0$. The contrapositive gives the desired statement.
\end{proof}

Finally, we prove the corollary by determining the density $\rho$ of $n\in \N$ satisfying the conditions in Proposition \ref{prop: additional conditions twist}.
\begin{proof}[Proof of Corollary \ref{cor: density for case (b), (c)}] Let $C=|G|$ in case (b) and $C=1$ in case (c). We want to count the number of solutions to $Cn(n-1)\equiv 0 \pmod o$. By removing the common factor between $C$ and $o$, this is equivalent to $n(n-1)\equiv 0 \pmod {o'}$ where $o'$ is defined as in the statement of the corollary. 

We claim that there are $2^{\omega(o')}$ such solutions modulo $o'$. Consider the prime factorization of $o'=p_1^{\alpha_1}\cdots p_k^{\alpha_k}$. Each prime can divide at most one of $n$ or $n-1$, so let $S$ be the set of indices $i$ where $p_i$ divides $n$. Then, we must have $\prod_{i\in S}p_i^{\alpha_i}\mid n$ and $\prod_{i\not\in S}p_i^{\alpha_i}\mid n-1$. By the Chinese remainder theorem, there is exactly one $n$ modulo $o'$ which satisfies this. Since there are $2^{\omega(o')}$ ways to choose $S$, the claim is true, and the statement follows by applying Corollary \ref{cor: density}.
\end{proof}
\subsection{Aside on determinantal identities}\label{sec: aside} Let $V$ satisfy condition (c), that is, each $V^{\otimes n}$ is an $S_n$-representation. We deduced from Proposition \ref{prop: additional conditions twist}(c) that $V_\zeta\in \mathcal F_V\setminus \mathcal F_{V,N}$ is an exceptional twist only if $\zeta$ is a root of unity and $\ord(\zeta)\mid n(n-1)$ for some $n\le N$.

In this section, we give a separate proof of this fact, which we believe to be rather enlightening and showcases the connection to determinantal identities of Zagier \cite{Zagier_1992} and Hanlon-Stanley \cite{Hanlon_Stanley_1998}. By Proposition \ref{prop: vanishing from shuffle product}, it suffices to prove the following statement.
\begin{proposition}\label{prop: tau isom det identity}
Let $p+q=n\ge 2$ be positive integers, then $\tau^\zeta_{p,q}$ is an isomorphism unless $\zeta$ is a root of unity with $\ord(\zeta) \mid i(i-1)$ for some $i\le n$.
\end{proposition}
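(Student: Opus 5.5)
Since $V^{\otimes n}$ factors through $S_n$, the twisted braiding $R_{V^*_{-\bar\zeta},\sigma}$ on $\mathfrak A^\zeta_n\cong V^{*\otimes n}_{-\bar\zeta}$ equals $\bar\zeta^{\inv(\sigma)}\rho(\sigma)$, where $\rho$ is the $S_n$-representation underlying $V^*_{-1}$; the sign twist only multiplies by $\operatorname{sgn}(\sigma)$, so it is still an $S_n$-representation. Identifying $\mathfrak A_p\otimes\mathfrak A_q$ and $\mathfrak A_n$ with $W=V^{*\otimes n}$, we get
$$\tau^\zeta_{p,q}=\rho\Big(\sum_{\sigma\in \mathrm{Sh}(p,q)} x^{\inv(\sigma)}\sigma\Big),\qquad x=\bar\zeta .$$
It therefore suffices to show that the group algebra element $\omega_{p,q}(x)=\sum_{\sigma\in\mathrm{Sh}(p,q)}x^{\inv(\sigma)}\sigma\in\C[S_n]$ acts invertibly in the regular representation, since then it acts invertibly on every representation, in particular on $W$. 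This reduces the proposition to a universal statement about $\C[S_n]$, independent of $V$.

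To handle $\omega_{p,q}(x)$, I will use the factorization through the Zagier element $\Omega_n(x)=\sum_{\sigma\in S_n}x^{\inv(\sigma)}\sigma$. Every $\sigma\in S_n$ factors uniquely as a $(p,q)$-shuffle times an element of $S_p\times S_q$, with inversions adding. This is the standard coset decomposition, and the shuffle sits on the correct side because of how $R_\sigma$ is defined via Matsumoto lifts. The factorization gives
$$\Omega_n(x)=\omega_{p,q}(x)\,\big(\Omega_p(x)\otimes\Omega_q(x)\big).$$
Now I invoke Zagier's theorem \cite{Zagier_1992}: $\Omega_n(x)$ is invertible in $\C[S_n]$ unless $x^{i(i-1)}=1$ for some $2\le i\le n$. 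Zagier gives an explicit factorization of $\Omega_n$ into elements of the form $(1-x^{k}\gamma_k)^{-1}$-type factors, with $\gamma_k$ a $k$-cycle-type element, and this yields $\det\rho_{\mathrm{reg}}(\Omega_n)=\prod_{k=2}^n(1-x^{k^2-k})^{\,n!(n-k+1)/(k^2-k)}$. If $\zeta$ is not a root of unity, or has order dividing no $i(i-1)$ with $i\le n$, then $\Omega_n(x)$ is invertible, since $x=\bar\zeta$ has the same order. Because $\C[S_n]$ is finite-dimensional, left invertibility of $\Omega_n(x)$ forces $\omega_{p,q}(x)$ to have a right inverse, namely $(\Omega_p\otimes\Omega_q)\Omega_n^{-1}$, and hence to be invertible. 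Notably, this does not even require invertibility of $\Omega_p$ and $\Omega_q$.

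The main obstacle is citing Zagier's determinant formula correctly. The paper also mentions Hanlon--Stanley, whose determinant formula for the $q$-shuffle element itself would give the result directly, but Zagier's statement for the full sum is cleaner. The remaining care points are:
\begin{itemize}
\item checking that the sign twist and the complex conjugation do not disturb the $S_n$-factorization;
\item checking that the left/right conventions in the coset factorization match the shuffle convention in $\tau_{p,q}$.
\end{itemize}
If the coset convention comes out reversed, I will use the factorization with the other order, $(\Omega_p\otimes\Omega_q)\,\omega'$, and apply the inverse map $\sigma\mapsto\sigma^{-1}$, which preserves inversion counts. The conclusion is unchanged.
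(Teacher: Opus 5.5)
Your proof is correct and follows essentially the same route as the paper: both reduce to invertibility of $\sum_{\sigma\in\text{Sh}(p,q)}\bar\zeta^{\inv(\sigma)}\sigma$ in $\C[S_n]$, both use Zagier's determinant formula for $\Omega_n(\bar\zeta)$, and both use the coset factorization $\Omega_n=\Omega_{p,q}\Omega_p\Omega_q$. The differences are small: you cite Zagier's formula where the paper rederives it via the Hanlon--Stanley identity, and you get invertibility of $\Omega_{p,q}(\bar\zeta)$ directly from the right inverse $(\Omega_p\Omega_q)\Omega_n^{-1}$ where the paper computes the determinant of $\Omega_{p,q}(\bar\zeta)$ as a quotient. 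Your worry about coset conventions is unnecessary, since $R_\sigma=\rho(\sigma)$ for the Matsumoto lift and the shuffles are exactly the minimal left coset representatives of $S_p\times S_q$.
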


Since $V^{\otimes n}$ is an $S_n$-representation, it follows that the twisted dual $V_{-1}^{*\otimes n}$ is also an $S_n$-representation for all $n$. Let us view the shuffle product $\tau^\zeta_{p,q}\colon \mathfrak A^\zeta _p\otimes \mathfrak A^\zeta_q\rightarrow \mathfrak A^\zeta_n$ as a map $V^{*\otimes n}_{-1}\rightarrow V^{*\otimes n}_{-1}$ via the obvious isomorphisms. Now, by viewing $V^{*\otimes n}_{-1}$ as a representation of $S_n$, the map $\tau^\zeta_{p,q}$ is simply given by the operator
$$\Omega_{p,q}(\bar\zeta)=\sum_{\sigma \in \text{Sh}(p,q)} \bar\zeta^{\inv(\sigma)} \sigma \in \C[S_n]$$
on this representation.

We want to know for which $\zeta$ the determinant of the operator $\Omega_{p,q}(\bar\zeta)$ acting on the representation $V^{*\otimes n}_{-1}$ is zero. By the representation theory of finite groups, we can decompose the representation
$$V^{*\otimes n}_{-1}=\bigoplus_i W_i^{k_i}$$
into irreducible representations of $S_n$. On the other hand, the regular representation $\C[S_n]$ contains all irreducible representations of $S_n$. Hence, this means that if $\Omega_{p,q}(\bar\zeta)$ is invertible on $\C[S_n]$ then it must be invertible on $V^{\otimes n}_{-1}$. So Proposition \ref{prop: tau isom det identity} will follow from the following statement.
\begin{lemma}\label{lem: det Omega_p,q}
The determinant of $\Omega_{p,q}(\bar\zeta)$ on $\C[S_n]$ is a polynomial in $\bar \zeta$, and the roots of this polynomial are exactly the roots of unity with order dividing $i(i-1)$ for some $i\le n$.
\end{lemma}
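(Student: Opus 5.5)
The plan is to reduce to Zagier's determinant formula for the ``full'' $q$-symmetrizer by a parabolic coset factorization. Write $x=\bar\zeta$ and set
$$\Gamma_m(x)=\sum_{\sigma\in S_m} x^{\inv(\sigma)}\sigma\in \C[x][S_m].$$
Every $\sigma\in S_n$ factors uniquely as $\sigma=\pi\cdot(\alpha\times\beta)$ with $\pi\in \text{Sh}(p,q)$ a minimal-length coset representative and $\alpha\times\beta\in S_p\times S_q$. Lengths add in this factorization: $\inv(\sigma)=\inv(\pi)+\inv(\alpha)+\inv(\beta)$. Hence in $\C[x][S_n]$ we get
$$\Gamma_n(x)=\Omega_{p,q}(x)\cdot\bigl(\Gamma_p(x)\otimes\Gamma_q(x)\bigr),$$
up to the order of the two factors, which depends on the left/right convention. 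Polynomiality of $\det\Omega_{p,q}(x)$ on $\C[S_n]$ is immediate, since all matrix entries lie in $\Z[x]$. Its constant term is $1$, because at $x=0$ only the identity shuffle survives, so the determinant is a nonzero polynomial.

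Taking determinants on the regular representation gives
$$\det\nolimits_{\C[S_n]}\Omega_{p,q}=\frac{\det_{\C[S_n]}\Gamma_n}{\det_{\C[S_n]}(\Gamma_p\otimes\Gamma_q)}.$$
The denominator is a power of $\det_{\C[S_p]}\Gamma_p\cdot\det_{\C[S_q]}\Gamma_q$: restricted to $S_p\times S_q$, $\C[S_n]$ is a multiple of the regular representation of $S_p\times S_q$, and $\Gamma_p\otimes\Gamma_q$ acts on $\C[S_p]\otimes\C[S_q]$ as a tensor product. I will then invoke Zagier's theorem \cite{Zagier_1992}. It factors $\Gamma_m=\prod_{k=2}^m(\text{elements }T_k)$ and shows that $\det_{\C[S_m]}\Gamma_m(x)$ equals a product of powers of $(1-x^{k(k-1)})$ for $2\le k\le m$, up to powers of cyclotomic-free factors from the inverses $(1-x^{k(k-1)})^{-1}$. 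Consequently every root of $\det\Gamma_n$ is a root of unity of order dividing $k(k-1)$ for some $k\le n$. Since $\det\Omega_{p,q}$ divides $\det\Gamma_n$ in $\C[x]$, this gives the inclusion ``roots of $\det\Omega_{p,q}\subseteq\{\zeta:\ord(\zeta)\mid i(i-1),\ i\le n\}$''. That inclusion is all that Proposition \ref{prop: tau isom det identity} actually needs.

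For the reverse inclusion (``exactly''), I would use the explicit exponents. For a primitive $d$-th root of unity, the multiplicity of $\Phi_d$ in $\det_{\C[S_n]}\Gamma_n$ is, by Zagier's formula, $n!\sum_{k\le n,\ d\mid k(k-1)} c_k$ with explicit positive weights $c_k$. The multiplicity in the denominator is $\frac{n!}{p!\,q!}$ times the sum of the analogous expressions for $p$ and $q$. The Hanlon--Stanley formula \cite{Hanlon_Stanley_1998} for the determinant of the shuffle element $\Omega_{p,q}$ itself on $\C[S_n]$ could be used directly instead. In either version I would check that the difference of multiplicities is strictly positive whenever $d\mid i(i-1)$ for some $i\le n$. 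The natural route is an induction on $n$ using $\Omega_{1,n-1}$, which coincides with Zagier's factor $T_n$ up to inversion, together with the observation that the $T_k$ for $k<n$ occur inside $\Omega_{p,q}$ via the factorization.

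The main obstacle is this converse step. The inclusion direction is a clean consequence of the coset factorization plus Zagier, but exact equality requires controlling multiplicities so that no cyclotomic factor cancels completely in the quotient. If the direct multiplicity comparison becomes messy, I will first try it for $p=1$, where $\Omega_{1,n-1}$ is essentially Zagier's $T_n$ and the root set is read off directly. I would then transfer the result to general $(p,q)$ using Proposition \ref{prop: vanishing from shuffle product}~(c)$\Leftrightarrow$(d), applied with $V$ the regular-representation braided setting, which forces all $\tau_{p,q}$ with $p+q=n$ to fail to be isomorphisms simultaneously once lower levels are isomorphisms.
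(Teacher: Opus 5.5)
Your route is essentially the paper's. The paper also gets $\det\Omega_{p,q}$ from the coset factorization $\Omega_n=\Omega_{p,q}\Omega_p\Omega_q$ together with the formula for $\det\Omega_n$ (Lemma \ref{lem: thm 2 zagier}). The only difference is that it reproves Zagier's formulas self-containedly via the Hanlon--Stanley identity, where you cite them. Your inclusion half is complete, and the divisibility shortcut is neat: it needs only the root set of $\det\Omega_n$, which is all Proposition \ref{prop: tau isom det identity} uses. The gap is the converse (the word ``exactly''). You leave it open as the ``main obstacle'', and your setup for it contains two misstatements, though the step is actually immediate.

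First, Zagier's formula is exact: $\det_{\C[S_m]}\Omega_m=\prod_{k=2}^m(1-\bar\zeta^{k(k-1)})^{m!(m-k+1)/(k(k-1))}$, with no extra ``cyclotomic-free'' factors. Second, the denominator is neither a single power of $\det\Omega_p\det\Omega_q$ nor $\frac{n!}{p!q!}$ times anything. Since $\C[S_n]$ is $\binom np$ copies of $\C[S_p]\otimes\C[S_q]$, it equals $(\det\Omega_p)^{n!/p!}(\det\Omega_q)^{n!/q!}$. With this correction you need not track individual $\Phi_d$. The exponent of each factor $1-\bar\zeta^{k(k-1)}$ in the quotient is
\[
\frac{n!}{k(k-1)}\Bigl[(n-k+1)-\max(p-k+1,0)-\max(q-k+1,0)\Bigr]=\frac{n!}{k(k-1)}\min(p,q,k-1,n-k+1)\ \ge 1
\]
for every $2\le k\le n$. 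So no factor cancels, and the root set is exactly the claimed one; this is precisely the paper's closed formula.

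Your fallback would not have sufficed. The equivalence (c)$\Leftrightarrow$(d) of Proposition \ref{prop: vanishing from shuffle product} says something at level $n$ only when all lower-level shuffle products are isomorphisms. Hence it can only detect roots of unity whose order divides $n(n-1)$ but no $k(k-1)$ with $k<n$. Roots already appearing at lower levels are exactly the ones it cannot reach. It would also require constructing a braided vector space whose multilinear part realizes $\Omega_{p,q}$ on $\C[S_n]$.
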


It turns out that the determinant can be evaluated explicitly using the methods of \cite{Zagier_1992} and \cite{Hanlon_Stanley_1998}. This turns out to be quite elegant and rather tricky. Our argument is based on \cite[Section 2]{Hanlon_Stanley_1998} and \cite[Section 3]{Zagier_1992}. Although some of the statements are already known, we adapt the arguments to give simple and self-contained proofs.

\subsubsection{Determinant computations} We define the operator $$\Omega_n(\bar\zeta)=\sum_{\sigma\in S_n}\bar\zeta^{\inv(\sigma)}\sigma.$$
We begin with evaluating the determinant of $\Omega_{1,n-1}(\bar\zeta)$. This will give us the determinant of $\Omega_n(\bar\zeta)$ which will in turn give us the determinant of $\Omega_{p,q}(\bar\zeta)$.
\begin{lemma}[{\cite[Theorem 2']{Zagier_1992}}]\label{lem: Thm 2' Zagier}
The determinant of $\Omega_{1,n-1}(\bar\zeta)$ on $\C[S_n]$ is
$$\prod_{i=2}^n(1-\bar\zeta^{i(i-1)})^{\frac{n!}{i(i-1)}}.$$
\end{lemma}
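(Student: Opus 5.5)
Our plan is to exploit the fact that $\Omega_{1,n-1}(\bar\zeta)$ acts on the regular representation by right multiplication by the element $\omega_n=\sum_{j=1}^{n}\bar\zeta^{\,j-1}c_j$. Here $c_j\in S_n$ is the cycle moving the first letter to position $j$, so $c_j$ is the $(1,n-1)$-shuffle with $\inv(c_j)=j-1$. The determinant of right multiplication on $\C[S_n]$ factors over the irreducibles as $\prod_\rho\det\rho(\omega_n)^{\dim\rho}$. We will not compute these factors directly. Instead we will find a factorization of $\omega_n$ in the group algebra into operators whose determinants are transparent, following Zagier's approach.

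The key identity is of the form
\[
\omega_n\cdot\bigl(1-\bar\zeta^{\,n-1}t_n\bigr)=\bigl(1-\bar\zeta^{\,n}\,c_n'\bigr)\cdot\widetilde\omega_{n-1},
\]
with $t_n$ and $c_n'$ suitable $n$-cycles, or products of a cycle with $\omega_{n-1}$ embedded in $S_{n-1}\subset S_n$. More concretely, we will look for a relation
\[
\Omega_{1,n-1}(\bar\zeta)\,(1-\bar\zeta^{\,n-1}\gamma_n)=(1-\bar\zeta^{\,n(n-1)}\gamma_n^{\,n})\,(\text{something invertible}),
\]
where $\gamma_n$ is an $n$-cycle. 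The mechanism is the geometric-series telescoping $\sum_j x^{j-1}y^{j-1}\cdot(1-xy)=1-(xy)^n$ applied inside a cyclic subgroup. We expect the actual identity to be Zagier's inductive formula
\[
\Omega_{1,n-1}(\bar\zeta)\,\bigl(1-\bar\zeta^{\,n}\gamma_n\bigr)\cdots=\prod(\dots)
\]
expressing $\Omega_{1,n-1}$ times a product of the elements $(1-\bar\zeta^{\,k}\gamma)$ as a product of the elements $(1-\bar\zeta^{\,k(k-1)}\cdots)$. This reduces the problem to determinants of elements $1-x g$ with $g$ of finite order $m$ acting by right multiplication on $\C[S_n]$. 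Such a determinant is computed cycle-by-cycle on right cosets of $\langle g\rangle$, which gives $(1-x^m)^{n!/m}$.

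The concrete steps are as follows.
\begin{enumerate}[(1)]
\item Verify the group-algebra factorization by checking coefficients on each permutation, using $\inv$-additivity for the shuffles involved.
\item Evaluate each factor's determinant as $(1-\bar\zeta^{\,?})^{n!/\ord}$ via the coset decomposition.
\item Take the quotient. The denominators, coming from factors like $(1-\bar\zeta^{\,k}\gamma_k)$ with $\gamma_k$ a $k$-cycle, contribute $(1-\bar\zeta^{\,k^2})^{n!/k}$-type terms. These should cancel against the numerators, up to a shift, leaving exactly $\prod_{i=2}^n(1-\bar\zeta^{\,i(i-1)})^{n!/(i(i-1))}$.
\end{enumerate}
An alternative route is induction on $n$. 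We would restrict to the subgroup $S_{n-1}$ fixing $n$ and write $\Omega_{1,n-1}^{(n)}$ in terms of $\Omega_{1,n-2}^{(n-1)}$ and a single cycle factor. This gives $\det_n=\det_{n-1}^{\,n}\cdot(1-\bar\zeta^{\,n(n-1)})^{(n-2)!}$, and the exponent bookkeeping $n\cdot\frac{(n-1)!}{i(i-1)}=\frac{n!}{i(i-1)}$ together with $(n-2)!=\frac{n!}{n(n-1)}$ matches the claimed formula. We will aim for exactly this recursion, since it makes the product formula immediate once established.

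The main obstacle is finding and proving the precise group-algebra identity that isolates a single factor $1-\bar\zeta^{\,n(n-1)}$ with multiplicity $(n-2)!$ beyond $n$ copies of the $(n-1)$ case. Our first attempt will be to find an invertible element $u$ (a product of terms $1-\bar\zeta^{\,k}c$ with determinants that cancel in pairs) such that
\[
\Omega_{1,n-1}\,u=\bigl(1\otimes\Omega_{1,n-2}\bigr)\,\bigl(1-\bar\zeta^{\,n(n-1)}\delta\bigr)\,u'.
\]
Here $\delta$ should have order $n(n-1)$ acting freely, for instance the product of an $n$-cycle and an $(n-1)$-cycle generating a free cyclic action of that order on cosets. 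We will check this identity coefficientwise. If that fails, we will fall back to a direct eigenvalue computation of $\rho(\omega_n)$ for the characters of $\C[S_n]$ using Hanlon–Stanley's description.
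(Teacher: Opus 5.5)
You have the right skeleton: induction on $n$, the target recursion $D_n=D_{n-1}^{\,n}\,(1-\bar\zeta^{n(n-1)})^{(n-2)!}$ for $D_n=\det\Omega_{1,n-1}(\bar\zeta)$ on $\C[S_n]$, and $\det(1-xg)=(1-x^m)^{n!/m}$ for $g$ of order $m$ via cosets of $\langle g\rangle$. The $n$-th power appears because $\C[S_n]$ is free of rank $n$ over $\C[S_{n-1}]$. The gap is that the whole proof rests on the group-algebra identity, which you leave open as ``the main obstacle''. None of the concrete candidates you float would give the formula:
\begin{itemize}
\item Pairing $\bar\zeta^{k}$ with a $k$-cycle, as in your step (3), produces factors $1-\bar\zeta^{k^2}$. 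These cannot cancel or shift into $1-\bar\zeta^{k(k-1)}$.
\item The factor $1-\bar\zeta^{n(n-1)}\gamma_n^{\,n}$ is just the scalar $1-\bar\zeta^{n(n-1)}$, since $\gamma_n^n=1$. It contributes multiplicity $n!$, not $(n-2)!$.
\item A factor $1-\bar\zeta^{n(n-1)}\delta$ with $\delta$ of order $n(n-1)$ gives, by your own rule, $(1-\bar\zeta^{n^2(n-1)^2})^{(n-2)!}$. More generally, any single factor $1-x\delta$ with $\delta\in S_n$ contributes $(1-x^m)^{n!/m}$. Multiplicity $(n-2)!$ would then need $\delta$ of order $n(n-1)$, and no such element exists already in $S_3$ or $S_4$.
\end{itemize}
Your first display is the closest, but the cycle weighted by $\bar\zeta^{n}$ there must be an $(n-1)$-cycle, not an $n$-cycle. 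The placement of the two factors also has to be checked against your composition convention.

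The missing idea is that the new factor is a \emph{ratio} of two cycle determinants. An $n$-cycle weighted by $\bar\zeta^{n-1}$ and an $(n-1)$-cycle weighted by $\bar\zeta^{n}$ both yield $1-\bar\zeta^{n(n-1)}$, with multiplicities $n!/n$ and $n!/(n-1)$, whose difference is $(n-2)!$. Concretely, take $\sigma_j=(j\ j-1\ \cdots\ 1)$, so that $\Omega_{1,n-1}(\bar\zeta)=\sum_{j=1}^n\bar\zeta^{j-1}\sigma_j$, and $w=(n\ n-1\ \cdots\ 2)$. Composing right to left, one checks that $\sigma_n\sigma_j=\sigma_{j-1}w$ for $2\le j\le n$. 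So left multiplication by $\sigma_n$ lowers the index at the cost of a fixed right factor $w$. This is a telescoping across cycles of different lengths, not a geometric series inside one cyclic subgroup, and it gives
\[
(1-\bar\zeta^{n-1}\sigma_n)\,\Omega_{1,n-1}(\bar\zeta)=\Omega_{1,n-2}(\bar\zeta)\,(1-\bar\zeta^{n}w),
\]
with $\Omega_{1,n-2}$ embedded via the first $n-1$ letters. Taking determinants on $\C[S_n]$ gives $D_n\,(1-\bar\zeta^{n(n-1)})^{(n-1)!}=D_{n-1}^{\,n}\,(1-\bar\zeta^{n(n-1)})^{n!/(n-1)}$. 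This is exactly the recursion you wanted, and it is the paper's proof. Without this identity or an equivalent one, your proposal does not yet establish the lemma. The Hanlon--Stanley fallback is a pointer rather than an argument.
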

\begin{proof}
Let $\sigma_j=(j \ j-1\ \cdots 1)$ so $\Omega_{1,n-1}(\bar\zeta)=\sum_{j=1}^{n} \bar\zeta^{j-1} \sigma_j$. Using the inclusion $S_{n-1}\hookrightarrow S_n$ into the first $n-1$ elements, we can also view $\Omega_{1,n-2}(\bar\zeta)=\sum_{j=1}^{n-1}\bar\zeta^{j-1}\sigma_j$ inside $\C[S_n]$. Now, denote $w=(n\ n-1\ \cdots 2)$, and we note the identity $\sigma_n\sigma_j=\sigma_{j-1}w$. This allows us to derive the identity
\begin{equation}\label{eqn: stanley identity}
(1-\bar\zeta^{n-1}\sigma_n)\Omega_{1,n-1}(\bar\zeta)=\Omega_{1,n-2}(\bar\zeta)(1-\bar\zeta^n w).
\end{equation}
which is essentially \cite[Equation (2.2)]{Hanlon_Stanley_1998}. 

Now we induct on $n$. We prove this for $n$ assuming the identity for $n-1$. The determinant $\Omega_{1,n-2}(\bar\zeta)$ on $\C[S_n]$ is the $n$-th power of its determinant on $\C[S_{n-1}]$ which we know by the induction hypothesis, so by Equation \eqref{eqn: stanley identity} we have
$$\det(\Omega_{1,n-1}(\bar\zeta))=\frac{\det(1-\bar\zeta^nw)}{\det(1-\bar\zeta^{n-1}\sigma_n) }\prod_{i=2}^{n-1}(1-\bar\zeta^{i(i-1)})^{\frac{n!}{i(i-1)}}$$
as polynomials in $\bar\zeta$.

It remains to evaluate $\det(1-\bar\zeta^nw)$ and $\det(1-\bar\zeta^{n-1}\sigma_n)$. Since $w$ is an $n-1$ cycle, the matrix of its action on $\C[S_n]$ decomposes into $n!/(n-1)$ blocks and each block is a cyclic matrix. Hence, the spectrum of $\bar\zeta^n w$ has $n!/(n-1)$ copies of the eigenvalue $\bar\zeta^n\zeta_{n-1}^j$ for each $0\le j<n-1$ where $\zeta_{n-1}$ is an $n-1$-th root of unity. The spectrum of $1-\bar\zeta^nw$ is thus determined, and we have
$$\det(1-\bar\zeta^nw)=\prod_{j=0}^{n-2}(1-\bar\zeta^n\zeta_{n-1}^j)^{\frac{n!}{n-1}}=(1-\bar\zeta^{n(n-1)})^{\frac{n!}{n-1}}.$$
Similarly, we obtain
$$\det(1-\bar\zeta^{n-1}\sigma_n)=(1-\bar\zeta^{n(n-1)})^{(n-1)!}$$
which finishes the induction.
\end{proof}
\begin{lemma}[{\cite[Theorem 2]{Zagier_1992}}]\label{lem: thm 2 zagier}
The determinant of $\Omega_n(\bar\zeta)$ on $\C[S_n]$ is
$$\prod_{i=2}^n(1-\bar\zeta^{i(i-1)})^{\frac{n!(n-i+1)}{i(i-1)}}.$$
\end{lemma}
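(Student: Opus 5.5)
We prove Lemma~\ref{lem: thm 2 zagier} by induction on $n$, using the factorization of $\Omega_n(\bar\zeta)$ through the shuffle operators $\Omega_{1,m-1}(\bar\zeta)$ together with Lemma~\ref{lem: Thm 2' Zagier}. The base case $n=1$ is trivial: the product is empty and $\Omega_1=1$.

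The first step is the factorization
\[
\Omega_n(\bar\zeta)=\Omega_{n-1}(\bar\zeta)\,\Omega_{1,n-1}(\bar\zeta),
\]
where $\Omega_{n-1}(\bar\zeta)$ is viewed inside $\C[S_n]$ via an inclusion $S_{n-1}\hookrightarrow S_n$. This comes from the unique factorization of permutations $\sigma=\pi\cdot s$, with $s\in \text{Sh}(1,n-1)$ and $\pi$ in the copy of $S_{n-1}$ permuting the appropriate $n-1$ positions. For the right choice of this copy, the factorization satisfies $\inv(\sigma)=\inv(\pi)+\inv(s)$, since $s$ is a minimal-length coset representative. Coset representatives of minimal length satisfy length additivity in Coxeter groups, so the only thing to settle is which side the parabolic subgroup sits on. With the convention $\Omega_{1,n-1}=\sum_j \bar\zeta^{j-1}\sigma_j$, where $\sigma_j=(j\ j-1\ \cdots\ 1)$, we have $\inv(\sigma_j)=j-1$. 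I would check directly that the copy of $S_{n-1}$ is the one fixing $1$ or fixing $n$ (whichever matches the multiplication order used in the paper), and if necessary write the identity as $\Omega_{1,n-1}\Omega_{n-1}$ instead. Either order suffices, since only determinants are needed. I expect pinning down this convention to be the only real obstacle; the rest is bookkeeping.

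Taking determinants on $\C[S_n]$ then gives
\[
\det\Omega_n=\det\big(\Omega_{n-1}\big|_{\C[S_n]}\big)\cdot\det\Omega_{1,n-1}.
\]
Since $\C[S_n]$ is a free $\C[S_{n-1}]$-module of rank $n$ under left multiplication (for either embedding), the first factor is the $n$-th power of the determinant on $\C[S_{n-1}]$, just as in the proof of Lemma~\ref{lem: Thm 2' Zagier}. By induction this gives
\[
\prod_{i=2}^{n-1}(1-\bar\zeta^{i(i-1)})^{\frac{n\,(n-1)!\,(n-i)}{i(i-1)}}=\prod_{i=2}^{n-1}(1-\bar\zeta^{i(i-1)})^{\frac{n!\,(n-i)}{i(i-1)}}.
\]
Lemma~\ref{lem: Thm 2' Zagier} supplies $\prod_{i=2}^{n}(1-\bar\zeta^{i(i-1)})^{\frac{n!}{i(i-1)}}$ for the second factor.

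Multiplying the two, the exponent of $(1-\bar\zeta^{i(i-1)})$ for $i\le n-1$ is $\frac{n!}{i(i-1)}\big((n-i)+1\big)$. For $i=n$ it is $\frac{n!}{n(n-1)}=\frac{n!(n-n+1)}{n(n-1)}$. Both agree with the claimed exponent $\frac{n!(n-i+1)}{i(i-1)}$, which completes the induction.
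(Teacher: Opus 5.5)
Your argument is correct and is essentially the paper's proof. You factor $\Omega_n(\bar\zeta)$ through the $(1,n-1)$-shuffles, use that $\Omega_{n-1}(\bar\zeta)$ acting on $\C[S_n]$ has determinant equal to the $n$-th power of its determinant on $\C[S_{n-1}]$, and combine Lemma~\ref{lem: Thm 2' Zagier} with induction. On the convention you flagged: the identity that actually holds is your fallback $\Omega_n(\bar\zeta)=\Omega_{1,n-1}(\bar\zeta)\Omega_{n-1}(\bar\zeta)$, with $S_{n-1}$ permuting the last $n-1$ elements, exactly as in the paper (this is for $\sigma_j=(j\ j-1\ \cdots\ 1)$ and the composition rule under which $\sigma_n\sigma_j=\sigma_{j-1}w$); the order you wrote first fails for both point-stabilizer copies once $n\ge 3$, since for either copy several $\sigma_j$ fall into the same right coset.
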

\begin{proof}
Using the inclusion $S_{n-1}\hookrightarrow S_n$ into the \textit{last} $n-1$ elements, the operator $\Omega_{n-1}(\bar\zeta)$ acts on $\C[S_n]$ with the $n$-th power of its determinant on $\C[S_{n-1}]$. We also note that 
$$\Omega_n(\bar\zeta)=\Omega_{1,n-1}(\bar\zeta)\Omega_{n-1}(\bar\zeta)$$
because any $\sigma\in S_n$ is a unique composition of permutation on the last $n-1$ elements with a $(1,n-1)$-shuffle. The formula thus follows from induction and Lemma \ref{lem: Thm 2' Zagier}.
\end{proof}
Finally, we have our desired formula, which proves Lemma \ref{lem: det Omega_p,q}.
\begin{lemma} The determinant of $\Omega_{p,q}(\bar\zeta)$ on $\C[S_n]$ is
$$\prod_{i=2}^n(1-\bar\zeta^{i(i-1)})^{\frac{n!}{i(i-1)}\min(p,q,i-1,n-i+1)}.$$
\end{lemma}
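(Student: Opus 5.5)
The plan is to use the same factorization trick as in Lemma \ref{lem: thm 2 zagier}. Every $\sigma\in S_n$ decomposes uniquely as a $(p,q)$-shuffle composed with a permutation in $S_p\times S_q$ (acting separately on the first $p$ and last $q$ letters), and inversions add. Hence in $\C[S_n]$ we have
$$\Omega_n(\bar\zeta)=\Omega_{p,q}(\bar\zeta)\,\bigl(\Omega_p(\bar\zeta)\otimes\Omega_q(\bar\zeta)\bigr),$$
where $\Omega_p\otimes\Omega_q$ denotes the product of $\Omega_p$ embedded on the first $p$ letters and $\Omega_q$ embedded on the last $q$ letters. These two commute.

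Taking determinants of left multiplication on $\C[S_n]$ gives
$$\det\Omega_{p,q}=\frac{\det\Omega_n}{\det_{\C[S_n]}(\Omega_p)\,\det_{\C[S_n]}(\Omega_q)}.$$
Since $\C[S_n]$ is free of rank $n!/p!$ over $\C[S_p]$, we get $\det_{\C[S_n]}\Omega_p=(\det_{\C[S_p]}\Omega_p)^{n!/p!}$, and similarly for $q$. Lemma \ref{lem: thm 2 zagier} then makes everything explicit. The exponent of $(1-\bar\zeta^{i(i-1)})$ is $\frac{n!}{i(i-1)}$ times
$$(n-i+1)-(p-i+1)_+-(q-i+1)_+ ,$$
where $(x)_+=\max(x,0)$ and the $p$-term only appears for $i\le p$.

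It remains to verify the elementary identity
$$(n-i+1)-(p-i+1)_+-(q-i+1)_+=\min(p,q,i-1,n-i+1)$$
for $2\le i\le n$. Assume without loss of generality $p\le q$ and split into three cases.
\begin{itemize}
\item If $i\le p$, both terms subtract, giving $n-i+1-(p-i+1)-(q-i+1)=i-1$. This equals the minimum, since $i-1<p\le q$ and $i-1\le n-i+1$ because $i\le p\le n/2$.
\item If $p<i\le q$, only the $q$-term subtracts, giving $n-q=p$. This equals the minimum, since $p\le i-1$ and $p\le n-i+1$ because $i\le q$.
\item If $i>q$, nothing subtracts, giving $n-i+1$. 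This equals the minimum, since $n-i+1\le n-q=p\le q$ and $n-i+1\le i-1$ because $i>n/2$.
\end{itemize}

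This gives the formula. Lemma \ref{lem: det Omega_p,q} follows because the exponent is positive for every $2\le i\le n$. The step needing the most care is justifying the division of determinants. This is fine as an identity of polynomials: each determinant is a polynomial with constant term $1$, hence nonzero, so we can divide in $\C(\bar\zeta)$. We also need the claim that left multiplication by an element of $\C[S_p]\subseteq\C[S_n]$ has determinant equal to the $[S_n:S_p]$-th power of its determinant on $\C[S_p]$, which follows from the coset decomposition.
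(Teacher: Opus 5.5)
Your proof is correct and follows the paper's argument. Both use the factorization $\Omega_n(\bar\zeta)=\Omega_{p,q}(\bar\zeta)\Omega_p(\bar\zeta)\Omega_q(\bar\zeta)$, which comes from the unique decomposition of each permutation as a shuffle times an element of $S_p\times S_q$, and then apply Lemma \ref{lem: thm 2 zagier}; you additionally write out the coset-power rule for determinants and the case check $(n-i+1)-(p-i+1)_+-(q-i+1)_+=\min(p,q,i-1,n-i+1)$, both of which the paper leaves implicit.
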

\begin{proof}
We use a similar trick to above. Since every $\sigma\in S_n$ is a unique composition of a permutation on the first $p$ elements, a permutation on the last $q$ elements, and a $(p,q)$-shuffle, we have
$$\Omega_n(\bar\zeta)=\Omega_{p,q}(\bar\zeta)\Omega_p(\bar\zeta)\Omega_q(\bar\zeta)$$
where the operators $\Omega_p(\bar\zeta)$ and $\Omega_q(\bar\zeta)$ acts on the first $p$ and last $q$ elements respectively. The formula then follows from Lemma \ref{lem: thm 2 zagier}.
\end{proof} 
\subsubsection{An explanation for $\C_\wedge$ and $\zeta_{12}$}\label{sec: explain zeta_12} We end the section by briefly explaining the exceptional case of $\ord(\zeta)=12$ as observed at the end of Example \ref{eg: computation C_wedge}. From the previous lemma, we see that the determinant of $\Omega_{p,q}(\bar\zeta_{12})$ on the regular representation $\C[S_n]$ is zero for $n\ge 4$. However, this only implies that $\Omega_{p,q}(\bar\zeta_{12})$ is degenerate for some irreducible representation $W_i$. On the other hand, not all irreducible representations of $S_n$ will appear in $V^{*\otimes n}_{-1}$ when $V=\C_\wedge$. In fact, by Equation \eqref{eqn: C_wedge decomp} and \cite[Lemma 8.0.3]{ES26}, $V^{\otimes n}$ is a direct sum of irreducible representations $\wedge^k\text{std}_n$, so the irreducible representations in $V^{*\otimes n}_{-1}$ are simply the dual of these tensored by the sign representation. Hence, it could be possible that $\Omega_{p,q}(\bar\zeta_{12})$ are non-degenerate on these irreducible representations. We believe that this phenomenon to be a coincidence, and that there are likely no other exceptions for larger $n$ and $\ord(\zeta)$.
\subsection{Multicolored case}\label{sec: multicolored case}
We now generalize this to the setting of multiple braided vector spaces, which is relevant to applications to multicolored configuration space in Section \ref{sec: intro multiple characters}. Roughly speaking, we want to consider a direct sum decomposition $V=V_1\oplus\cdots \oplus V_k$, and add twists to the braiding between each $V_i$ and $V_j$. We make this precise as follows.

We need to deal with a collection of braided vector spaces with compatible braidings between them, and the following construction makes this possible. Define the notion of a \textit{braided monoidal abelian category} $(\mathcal V, \otimes, 1, R)$ over $\C$ to be a monoidal $\C$-linear abelian category $(\mathcal V, \otimes, 1)$ with the following braiding data. For any $V,W\in \mathcal V$, there is a braiding $R_{V,W}\colon V\otimes W\xrightarrow\sim W\otimes V$ which satisfies the braiding axioms, see \cite[Definition 3.2.1] {HS20}. 

Let $\mathcal V$ be a braided monoidal abelian category of $\C$-vector spaces, which means that the underlying objects of the category are $\C$-vector spaces and the tensor product is given by the tensor product as vector spaces. Then, each object $V\in \mathcal V$ is itself a braided vector space by looking at $R_{V,V}$. This category now admits direct sums by taking the usual direct sum of the underlying vector spaces, with braiding data defined blockwise. The example relevant to our applications is the category of Yetter-Drinfeld modules $\mathcal{YD}_G$, see \cite[Section 1.4]{HS20} or \cite[Example 2.5]{ma2026weightfiltrationhurwitzspaces} for a definition.

Now, let $V_1,\ldots, V_k\in \mathcal V$ be objects in this category with braiding data $R_{V_i,V_j}$. We can construct the direct sum $V=V_1\oplus\cdots\oplus V_k$, and to consider twists of $V$ we add twists to the braiding data. Let $\zeta_i\in \C^\times$ for $1\le i\le k$ and $\zeta_{ij}\in \C^\times$ for $1\le i<j\le k$. We can construct another braided monoidal category $\mathcal V'$ which is freely generated by $V_1',\ldots, V_k'$, where $V_i'\cong V_i$ are isomorphic as vector spaces but the braiding data is now twisted: $R_{V_i',V_i'}=\zeta_i\cdot R_{V_i,V_i}$ for $1\le i\le k$, $R_{V_i',V_j'}=\zeta_{ij}\cdot R_{V_i,V_j}$ and $R_{V_j',V_i'}=\zeta_{ij}\cdot R_{V_j,V_i}$ for $1\le i<j\le k$. This is well defined as the braiding axioms will still be true as the twisting simply introduces a constant multiplier and the required diagrams will commute. Finally, we can define the twist given by the data $\vec\zeta=((\zeta_i)_i,(\zeta_{ij})_{ij})$ to be the braided vector space which is the direct sum $V_{\vec\zeta}=V_1'\oplus\cdots \oplus V_k'$ in the category $\mathcal V'$. We remark that there are other ways to define this, for example, one could follow the definitions in \cite[Section 5]{ES26}.

Let $\mathcal F_V$ be the family of twists $V_{\vec \zeta}$, and $\mathcal F_{V,N}$ be the twists that vanish in a $(N,N-2)$ staircase, defined analogously to Section \ref{sec: twists of BVS}. Recall that these imply vanishing of slope $\frac{N-1}{N+1}$ by Theorem \ref{thm: main extrapolating vanishing}. Without any assumptions on $V$, we have the following proposition.
\begin{proposition}
Let $N\ge 2$ be an integer. The twist $V_{\vec\zeta}$ is in the exceptional set $\mathcal F_V\setminus \mathcal F_{V,N}$ if and only if $\vec{\bar\zeta}=((\bar\zeta_i)_i, (\bar\zeta_{ij})_{ij})$ is a root of a polynomial $P_N(\vec{\bar\zeta})$ of total degree not more than $N^2d^N$.
\end{proposition}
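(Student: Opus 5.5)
This is the multivariable analogue of Lemma \ref{lem: P_N} and Lemma \ref{lem: f}, and we would follow the proof of Proposition \ref{prop: general case twist} almost verbatim. By Equation \eqref{eqn: homology to bar-complex}, the relevant quantum shuffle algebra is $\mathfrak A^{\vec\zeta}\coloneqq\mathfrak A(V^*_{-1,\vec{\bar\zeta}})$, where dualizing conjugates every twist parameter and the overall sign twist multiplies each braiding by $-1$. Choose a basis of $V^*$ adapted to the decomposition $V^*=V_1^*\oplus\cdots\oplus V_k^*$, so each basis vector $v$ has a colour $c(v)\in\{1,\dots,k\}$. In the basis of words, the shuffle product $\tau^{\vec\zeta}_{p,q}$ then becomes a $d^n\times d^n$ matrix $M^{\vec\zeta}_{p,q}$, where $d=\dim V$ and $n=p+q$.

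The key computation is the analogue of the expansion in Lemma \ref{lem: f}. For a shuffle $\sigma$ applied to a word, $R_\sigma$ is a product of $\inv(\sigma)$ elementary braidings. Each elementary crossing of a colour-$i$ strand over a colour-$j$ strand contributes the scalar $-\bar\zeta_i$ if $i=j$, and $-\bar\zeta_{ij}$ (with $i<j$ after reordering) otherwise, times the untwisted braiding. The untwisted braiding is block-compatible with the decomposition because the braiding data is defined blockwise. The colours of the two strands in each crossing are determined by the input word and $\sigma$, since braidings in $\mathcal V$ map $V_i\otimes V_j$ to $V_j\otimes V_i$. Hence
\[
R_{\sigma}(w)=\pm\,\mathbf m_{\sigma,w}(\vec{\bar\zeta})\,R^{0}_{\sigma}(w),
\]
where $\mathbf m_{\sigma,w}$ is a monomial of total degree exactly $\inv(\sigma)\le pq$ and $R^0_\sigma$ is the untwisted braiding. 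So every entry of $M^{\vec\zeta}_{p,q}$ is a polynomial in the variables $\vec{\bar\zeta}$ of total degree at most $pq$. Only $\sigma=\id$ contributes a constant term, giving $1$'s on the diagonal. Therefore $f_{p,q}(\vec{\bar\zeta})\coloneqq\det M^{\vec\zeta}_{p,q}$ is a nonzero polynomial with constant term $1$ and total degree at most $pq\,d^n$.

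Next we invoke Proposition \ref{prop: vanishing from shuffle product}. Its proof is purely formal in the bar complex and uses nothing specific to a single twist parameter. It gives that $V_{\vec\zeta}\notin\mathcal F_{V,N}$ if and only if there is some $2\le n\le N$ with $f_{p,q}(\vec{\bar\zeta})=0$ for every $p+q=n$. One caveat: in several variables, $\gcd$ no longer describes the common zero locus, since the vanishing set of several polynomials need not be a hypersurface. So instead of a product of $\gcd$'s, we define
\[
P_N=\prod_{n=2}^N f_{1,n-1}.
\]
This certainly vanishes on the exceptional set. The total degree is then at most $\sum_{n=2}^N (n-1)d^n<N^2d^N$, exactly as before.

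The main obstacle is the converse direction of ``if and only if'': a root of $f_{1,n-1}$ need not make all $f_{p,q}$ with $p+q=n$ vanish. We would first try to show that the zero locus of $f_{1,n-1}$ is contained in that of each $f_{p,q}$ with $p+q=n$, together with the zero loci for smaller $n$. This would use the factorization trick from Section \ref{sec: aside}: compose shuffle products, $\tau_{1,n-1}$-type maps factor through $\tau_{p,q}$ after tensoring with isomorphisms for smaller sizes, and induct on $n$ as in the proof of Proposition \ref{prop: vanishing from shuffle product} $(b\Rightarrow c)$. That induction shows that if some $\tau_{p,q}$ is an isomorphism for all smaller sizes, then all $\tau_{p',q'}$ with $p'+q'=n$ are. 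Combined with the proposition's equivalence of (c) and (d), this gives that the exceptional set at level $N$ is precisely the union over $n\le N$ of the zero sets of $f_{1,n-1}$. Hence the exceptional set is exactly the zero set of $P_N$.

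If that route fails, the fallback is to allow $P_N$ to be any single polynomial cutting out the locus set-theoretically. One could take a suitable generic linear combination, over $n$ and $p$, of products of the $f_{p,q}$, keeping the total degree bound $\max_{p}pq\,d^n$ per factor. The degree bound is still controlled by the single-variable estimate. The degree bound itself is routine; the real content is this set-theoretic description of the exceptional locus.
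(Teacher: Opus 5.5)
Your proof is correct. It departs from the paper at exactly the one step where the paper's one-line proof (``completely analogous to Proposition~\ref{prop: general case twist}'') is too quick.

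\textbf{What the paper does.} Transcribed literally, the paper's proof takes $P_N=\prod_{n=2}^N\gcd(f_{1,n-1},\ldots,f_{n-1,1})$. It then justifies Lemma~\ref{lem: P_N} by the univariate fact that the common zeros of several polynomials are the zeros of their gcd. As you note, this fails in several variables: for example, $\gcd(x,y)=1$, yet $x$ and $y$ share the zero $(0,0)$.

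\textbf{What your route does.} Your choice $P_N=\prod_{n=2}^N f_{1,n-1}$ avoids the issue. The converse you call the main obstacle is in fact immediate from (a)$\Leftrightarrow$(c)$\Leftrightarrow$(d) of Proposition~\ref{prop: vanishing from shuffle product}:
\begin{itemize}
\item If $f_{1,n-1}(\vec{\bar\zeta})=0$ for some $2\le n\le N$, then (c) fails, hence (a) fails.
\item Conversely, if (a) fails then (d) fails, so for some $n\le N$ every $f_{p,n-p}$ vanishes, in particular $f_{1,n-1}$.
\end{itemize}
So the exceptional set is exactly the hypersurface $\bigcup_{n\le N}Z(f_{1,n-1})$. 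The detour through the induction and the containments is unnecessary.

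\textbf{Degree bound.} Your count is right. Each crossing contributes exactly one twist variable, so every entry has total degree at most $pq$. The constant term of the determinant is $\det M(0)=1$, so $f_{p,q}\neq 0$. This gives total degree at most $\sum_{n=2}^N(n-1)d^n<N^2d^N$.

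\textbf{Comparison.} The paper's route buys brevity. Yours is what actually makes the multivariable statement true as written, and it is simpler even in one variable. Your observation also rescues the gcd version a posteriori. Each irreducible component of the hypersurface lies in some $\bigcap_p Z(f_{p,n-p})$, so its defining irreducible polynomial divides the corresponding gcd.

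\textbf{The fallback.} You should drop it. No single polynomial can cut out a locus that is not a hypersurface. A generic linear combination of the $f_{p,q}$ in general vanishes on more than their common zero set. Since the locus is a hypersurface anyway, the fallback is never needed.
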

\begin{proof}
The proof is completely analogous to the proof of Proposition \ref{prop: general case twist}, except that now $f_{p,q}(\vec{\bar\zeta})$ is now a multivariate polynomial.
\end{proof}

Suppose we are only concerned with the case where $\zeta_i,\zeta_{ij}$ are all roots of unity, like the situation in Section \ref{sec: intro multiple characters}. Then, Manin-Mumford for tori \cite[Theorem 4.2.2]{Bombieri_Gubler_2009} immediately gives us the following corollary.
\begin{corollary}\label{cor: manin mumford}
Let $N\ge 2$ be an integer. Suppose $\zeta_i,\zeta_{ij}$ are given to be roots of unity. Then there exists a finite union of torsion cosets $U$ such that $V_{\vec\zeta}$ is in the exceptional set $\mathcal F_V\setminus \mathcal F_{V,N}$ if and only if $\vec\zeta\in U$.
\end{corollary}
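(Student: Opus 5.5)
The plan is to derive Corollary \ref{cor: manin mumford} from the preceding proposition together with the Manin--Mumford theorem for tori. First I would fix the polynomial $P_N$ given by that proposition. It is a polynomial in the variables $\vec{\bar\zeta}=((\bar\zeta_i)_i,(\bar\zeta_{ij})_{ij})$ in $m=k(k+1)/2$ variables. The proposition says $V_{\vec\zeta}\in\mathcal F_V\setminus\mathcal F_{V,N}$ if and only if $P_N(\vec{\bar\zeta})=0$.

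Next I would check that $P_N$ is a nonzero polynomial. In the single-variable case this came from Lemma \ref{lem: f}: the constant term of each $f_{p,q}$ is $1$, because only the identity shuffle contributes the $\zeta$-free part. The same argument applies verbatim in the multivariate setting, since each non-identity shuffle contributes a monomial of positive degree in the twisting variables. Hence $P_N$, a product of gcds of the $f_{p,q}$, has nonzero constant term. This shows that $Z=\{P_N=0\}\subseteq\G_m^m$ is a proper Zariski closed subset, although Manin--Mumford does not actually require properness.

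Then I would apply \cite[Theorem 4.2.2]{Bombieri_Gubler_2009}. The Zariski closure of the torsion points of $\G_m^m(\C)$ lying in $Z$ is a finite union of torsion cosets. Equivalently, $Z\cap(\mu_\infty)^m=U'\cap(\mu_\infty)^m$ for some finite union $U'$ of torsion cosets contained in $Z$.

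Finally I would transfer this from $\vec{\bar\zeta}$ to $\vec\zeta$. For roots of unity, $\bar\zeta=\zeta^{-1}$, so inversion $\iota\colon\G_m^m\to\G_m^m$ carries torsion points to torsion points and torsion cosets to torsion cosets, because it is a group automorphism. Setting $U=\iota(U')$, we get that $\vec\zeta$ lies in $U$ exactly when $\vec{\bar\zeta}$ lies in $U'$, which happens exactly when $P_N(\vec{\bar\zeta})=0$. This is the desired characterization.

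The only step needing any care is the multivariate analogue of Lemma \ref{lem: f}, since everything else is a formal reduction. Even that step is not strictly needed: if $P_N$ were identically zero, $U$ would simply be all of $\G_m^m$, which is itself a torsion coset, so the corollary would still hold.
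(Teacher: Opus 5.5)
Your proposal is correct and is the paper's own argument: combine the preceding proposition with Manin--Mumford for tori \cite[Theorem 4.2.2]{Bombieri_Gubler_2009}, with your inversion step $\vec\zeta\mapsto\vec{\bar\zeta}=\vec\zeta^{\,-1}$ making explicit what the paper leaves implicit. One small caveat is that in several variables a gcd no longer cuts out the common zero locus, so the polynomial that actually works is $\prod_{n=2}^N f_{1,n-1}$ (by (c)$\Leftrightarrow$(d) of Proposition \ref{prop: vanishing from shuffle product}); as you already remark, Manin--Mumford applies to any Zariski closed subset of $\G_m^m$, so this does not affect your proof.
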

\subsubsection{Additional conditions} Keeping the assumption that $\zeta_i,\zeta_{ij}$ are roots of unity, and if we also impose the additional conditions of (a), (b) and (c) in Section \ref{sec: finite monodromy etc.} on $V$, we can get much more information on these torsion cosets. 

We will derive this in an analogous way to Section \ref{sec: finite monodromy etc.}. Following the proof of Proposition \ref{prop: additional conditions twist}, we choose $k=2|H_n|$ if $n\ge 3$ and $k=|H_2|$ if $n=2$ for case (a), $k=2|G|$ for (b) and $k=2$ for (c). Then, Lemma \ref{lem: fundamental braid acts trivially} tells us that $\Delta_n^k$ acts trivially on $V^{\otimes n}$. Now, we decompose the $B_n$-representation $V^{\otimes n}$ as
$$V^{\otimes n}=\bigoplus_{n_1+\cdots+n_k=n}\Ind_{B_{n_1,\ldots, n_k}}^{B_n}\left(V_1^{\otimes n_1}\otimes \cdots \otimes V_n^{\otimes n_k}\right)$$ 
where the induced representation of the form above is the direct summand of all tensor products which contain the factor $V_i$ exactly $n_i$ times. Since $\Delta_n^k$ acts trivially on $V^{\otimes n}$, it acts trivially on each of these factors. We decompose $V_{\vec\zeta}^{\otimes n}$ analogously as
$$V_{\vec\zeta}^{\otimes n}=\bigoplus_{n_1+\cdots+n_k=n}\Ind_{B_{n_1,\ldots, n_k}}^{B_n}\left(V_1'^{\otimes n_1}\otimes \cdots \otimes V_n'^{\otimes n_k}\right),$$
so after accounting for the added twists, $\Delta_n^k$ acts on the twisted induced representation above by $\vec\zeta(n_1,\ldots, n_k)^k$ where we define
$$\vec\zeta(n_1,\ldots, n_k)\coloneqq\prod_i\zeta_i^{n_i(n_i-1)/2}\prod_{i<j}\zeta_{ij}^{n_in_j}.$$ 
If this scalar $\vec\zeta(n_1,\ldots, n_k)^k\neq 1$, then the cohomology of the twisted induced representation must be zero by the analogue of Proposition \ref{prop: vanishing from fundamental braid} where $V^{\otimes n}$ is replaced by this representation. Hence, we have the following result in analogy to Proposition \ref{prop: additional conditions twist}.
\begin{proposition}\label{prop: explicit torsion coset}
Let $n\ge 2$ be an integer. Suppose that $\zeta_i,\zeta_{ij}$ are all roots of unity. In each of the three cases, we have that $H_i(B_n,V_{\vec\zeta}^{\otimes n})=0$ for all $i$ unless there is some $n_1+\cdots +n_k=n$ such that
\begin{enumerate}[(\alph*)]
    \item $\zeta(n_1,\ldots, n_k)^{2|H_n|}=1$ for $n\ge 3$, $\zeta(n_1,\ldots, n_k)^{|H_n|}=1$ for $n=2$,
    \item $\zeta(n_1,\ldots, n_k)^{2|G|}=1$,
    \item $\zeta(n_1,\ldots, n_k)^2=1$,
\end{enumerate}
for each corresponding case.
\end{proposition}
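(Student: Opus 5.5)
The plan is to apply Proposition \ref{prop: vanishing from fundamental braid} summand by summand, in its version for an arbitrary $B_n$-representation in place of $V^{\otimes n}$. That version is noted in the remark following the proposition, and the proof given there only uses that a central element acts by a scalar. First I decompose $V_{\vec\zeta}^{\otimes n}$ as the direct sum over $n_1+\cdots+n_k=n$ of the induced pieces $W_{\vec n}'=\Ind_{B_{n_1,\ldots,n_k}}^{B_n}\bigl(V_1'^{\otimes n_1}\otimes\cdots\otimes V_k'^{\otimes n_k}\bigr)$. Group homology commutes with direct sums, so it suffices to show $H_i(B_n,W'_{\vec n})=0$ for every $\vec n$ for which the stated condition fails.

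Fix $\vec n$. Choose $k$ as in the proof of Proposition \ref{prop: additional conditions twist}: $k=2|H_n|$ (or $|H_2|$ when $n=2$) in case (a), $k=2|G|$ in case (b), and $k=2$ in case (c). By Lemma \ref{lem: fundamental braid acts trivially}, $\Delta_n^k$ acts trivially on $V^{\otimes n}$. The corresponding untwisted summand $W_{\vec n}$ is $B_n$-stable, so $\Delta_n^k$ also acts trivially on it.

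Next I compare the twisted and untwisted actions. The spaces $W'_{\vec n}$ and $W_{\vec n}$ are identical as vector spaces, spanned by tensor words with $n_i$ letters from $V_i$. Each generator $\sigma_j$ acts on $W'_{\vec n}$ as on $W_{\vec n}$, multiplied by $\zeta_a$ or $\zeta_{ab}$ according to the colors of the two adjacent letters. To compute the action of $\Delta_n$, I use its positive reduced word (the Matsumoto lift of the longest permutation). In this word each pair of strands crosses exactly once. So applying $\Delta_n$ to a word of color type $\vec n$ multiplies the untwisted result by $\prod_i\zeta_i^{\binom{n_i}{2}}\prod_{i<j}\zeta_{ij}^{n_in_j}=\vec\zeta(n_1,\ldots,n_k)$, and this scalar is independent of which word is chosen within the summand. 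Hence $\Delta_n^k$ acts on $W'_{\vec n}$ by $\vec\zeta(n_1,\ldots,n_k)^k$ times the untwisted action, that is, by the scalar $\vec\zeta(n_1,\ldots,n_k)^k$.

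Finally, $\Delta_n^k$ is central: $k$ is even when $n\ge3$, and $\Delta_2$ is itself central. If $\vec\zeta(n_1,\ldots,n_k)^k\ne1$, the argument of Proposition \ref{prop: vanishing from fundamental braid} kills $H_*(B_n,W'_{\vec n})$. The contrapositive, taken over all $\vec n$, gives the stated conditions (a)–(c). The main point needing care is the scalar bookkeeping: I must check that the twist factor is constant on the summand and does not depend on the order of letters. This holds because every pair of strands in $\Delta_n$ crosses exactly once, and the twist for a crossing depends only on the unordered pair of colors, since $R_{V_i',V_j'}$ and $R_{V_j',V_i'}$ are both scaled by $\zeta_{ij}$.
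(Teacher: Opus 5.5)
Your proposal is correct and follows the paper's own argument. The paper decomposes $V_{\vec\zeta}^{\otimes n}$ into the color-type summands $\Ind_{B_{n_1,\ldots,n_k}}^{B_n}(\cdots)$, uses Lemma \ref{lem: fundamental braid acts trivially} to get $\Delta_n^k$ acting trivially on each untwisted summand, notes that it acts by the scalar $\vec\zeta(n_1,\ldots,n_k)^k$ on the twisted summand, and applies the central-element argument of Proposition \ref{prop: vanishing from fundamental braid} summand by summand, exactly as you do. Your remark that every pair of strands crosses exactly once in the positive word for $\Delta_n$, with $R_{V_i',V_j'}$ and $R_{V_j',V_i'}$ both scaled by $\zeta_{ij}$, supplies the bookkeeping that the paper compresses into ``after accounting for the added twists.''
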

By combining the equations above for $2\le n\le N$, we have the explicit equations for the torsion cosets in Corollary \ref{cor: manin mumford} which cut out the twists in the exceptional set $\mathcal F_V\setminus \mathcal F_{V,N}$.
\section{Individual braided vector spaces}\label{sec: individual BVS}
In this section, we consider specific examples of braided vector spaces and apply the principle of Corollary \ref{cor: optimal slope cor} to attempt to compute near optimal bounds for each individual braided vector space. We will first look at some twists of $\C_\wedge$ which are directly related to bias in Gauss sums, then look at sign twists of racks coming from groups $(\C R)_{-1}$ which correspond to Möbius sums over Galois $G$-extensions. Lastly, we will consider other examples of braided vector spaces.
\subsection{Twists of $\C_\wedge$}\label{sec: individual C_wedge} Here, we prove the homological vanishing results relevant to Figure \ref{fig: table order}. We consider $(\C_\wedge)_{\zeta}$ for the following $5$ cases $\zeta\in \{\zeta_{12},\zeta_{15},\zeta_{20},\zeta_{60},\zeta_{105}\}$. The point is to showcase how our techniques allow us to prove better bounds given specific orders, and the key is to use the vanishing in Proposition \ref{prop: additional conditions twist}(c) to extend the vanishing staircases. In the first three cases, we can get some computational results which will help us get better bounds, and we present this in Figure \ref{fig: C_wedge computational results}. In the latter two cases, the order is too large for us to observe any $n$ where the homology does not already vanish trivially by Proposition \ref{prop: additional conditions twist}(c). Nevertheless, we can obtain better vanishing results by choosing when to apply Theorem \ref{thm: main extrapolating vanishing} appropriately. We consider each of the $5$ cases below.

\begin{figure}[h]
    \centering
\begin{tikzpicture}[x=1cm,y=1cm,scale=0.5]
\drawnaxis{13}
\drawiaxis{12}
\drawstaircase{1}{13}{0}{black!10}{0}
\foreach \n/\i/\txt in {
1/0/2,
9/7/6,
9/8/6,
12/10/36,
12/11/36,
13/10/12,
13/11/72,
13/12/60
 }{
    \cell{\n}{\i}{\txt}
  }
\end{tikzpicture}
\begin{tikzpicture}[x=1cm,y=1cm,scale=0.45]
\drawnaxis{15}
\drawiaxis{14}
\drawstaircase{1}{15}{0}{black!10}{0}
\foreach \n/\i/\txt in {
1/0/2,
6/4/2,
6/5/2,
10/8/14,
10/9/14
 }{
    \cell{\n}{\i}{\txt}
  }
\foreach \n/\i/\txt in {
15/13/162,
15/14/162
 }{
    \smallcell{\n}{\i}{\txt}
  }
\end{tikzpicture}
\begin{tikzpicture}[x=1cm,y=1cm,scale=0.45]
\drawnaxis{16}
\drawiaxis{15}
\drawstaircase{1}{16}{0}{black!10}{0}
\foreach \n/\i/\txt in {
1/0/2,
5/3/2,
5/4/2
 }{
    \cell{\n}{\i}{\txt}
  }
\foreach \n/\i/\txt in {
16/14/280,
16/15/280
 }{
    \smallcell{\n}{\i}{\txt}
  }
\end{tikzpicture}
    \caption{Homology of $\C_\wedge$ twisted by $\zeta_{12}$, $\zeta_{15}$ and $\zeta_{20}$.}\label{fig: C_wedge computational results}.
\end{figure}
\begin{enumerate}
\item $\zeta_{12}$. Proposition \ref{prop: additional conditions twist}(c) tells us that the homology vanishes unless $n\equiv 0,1,4,9\pmod{12}$, which gives a vanishing slope of $\frac 2 3$ by Corollary \ref{cor: density for case (b), (c)}. From our computations in Figure \ref{fig: C_wedge computational results}, we see that there is vanishing in a $(8,6)$ staircase, giving a vanishing slope of $\frac 7 9$. We can combine both facts to deduce vanishing in increasingly larger staircases.
\begin{lemma}
$H_i(B_n,(\C_\wedge)_{\zeta_{12}}^{\otimes n})$ vanishes in a $(12 \cdot 2^i-4,10\cdot2^i-4)$ staircase for every $i\ge 0$.
\end{lemma}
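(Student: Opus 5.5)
The plan is induction on $i$, driven by Theorem \ref{thm: main extrapolating vanishing}: doubling a staircase comes from the theorem, and the three extra columns needed on the right come from Proposition \ref{prop: additional conditions twist}(c). Throughout, write $V=(\C_\wedge)_{\zeta_{12}}$ and $(N_i,I_i)=(12\cdot 2^i-4,\,10\cdot 2^i-4)$.

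\emph{Base case $i=0$.} Here $(N_0,I_0)=(8,6)$, so we need $H_j(B_n,V^{\otimes n})=0$ for all $n\le 8$ and $j\le n-2$. I would check this directly against Figure \ref{fig: C_wedge computational results}, whose only nonzero entry with $n\le 8$ is $H_0(B_1,V)$. Alternatively, Proposition \ref{prop: additional conditions twist}(c) kills all homology unless $12\mid n(n-1)$, i.e. unless $n\equiv 0,1,4,9\pmod{12}$. For $2\le n\le 8$ this leaves only $n=4$, and that column is covered by the computation.

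\emph{Inductive step.} Assume $V$ vanishes in the $(N_i,I_i)$ staircase. The remark after Theorem \ref{thm: main extrapolating vanishing} shows that vanishing in an $(N,I)$ staircase implies vanishing in the $(k(N+1)-1,\,k(I+1)-1)$ staircase for every $k\ge 1$. Taking $k=2$ gives vanishing in the staircase
\[
(2N_i+1,\,2I_i+1)=(24\cdot 2^i-7,\;20\cdot 2^i-7).
\]
The target is $(N_{i+1},I_{i+1})=(24\cdot 2^i-4,\;20\cdot 2^i-4)$, and both staircases have the same diagonal bound. For every $n$,
\[
I_{i+1}+n-N_{i+1}=n-4\cdot 2^i=(2I_i+1)+n-(2N_i+1).
\]
So on the columns $n\le 24\cdot 2^i-7$ the new staircase asks for exactly what we already have. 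It remains only to treat the three new columns $n=24\cdot 2^i-6,\;24\cdot 2^i-5,\;24\cdot 2^i-4$.

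Since $12\mid 24\cdot 2^i$, these three values satisfy $n\equiv 6,7,8\pmod{12}$. None of these residues lies in $\{0,1,4,9\}$, so $12\nmid n(n-1)$. Proposition \ref{prop: additional conditions twist}(c) then gives $H_j(B_n,V^{\otimes n})=0$ for all $j$ in these columns. This completes the induction.

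The main point needing care is the bookkeeping: confirming that the doubled staircase coincides with the target staircase on the old columns, which is the identity displayed above. The other input that must actually be verified, rather than derived, is the base-case computation at $n=4$.
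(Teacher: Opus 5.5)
Your proposal is correct and is essentially the paper's proof. You induct on $i$, use Theorem~\ref{thm: main extrapolating vanishing} to pass from the $(N,I)$ staircase to the $(2N+1,2I+1)$ staircase, and then fill in the three new columns $n\equiv 6,7,8 \pmod{12}$ with Proposition~\ref{prop: additional conditions twist}(c). Your check that the doubled and target staircases have the same diagonal bound, and your reduction of the base case to the column $n=4$, are only slightly more detailed bookkeeping than the paper writes out.
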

\begin{proof}
We prove this by induction. The $i=0$ case of $(8,6)$ is true from above. We now prove the statement for $i$ assuming that it is true for $i-1$, i.e. we have vanishing in a $(N,I)\coloneqq (12\cdot2^{i-1}-4,10\cdot 2^{i-1}-4)$ staircase. Applying Theorem \ref{thm: main extrapolating vanishing} to the $(N,I)=(12 \cdot 2^{i-1}-4,10\cdot2^{i-1}-4)$ staircase tells us that it vanishes for the $(2N+1,2I+1)=(12\cdot 2^i-7,10\cdot 2^i-7)$ staircase. Furthermore, when $n=12\cdot 2^i-6,12\cdot 2^i-5,12\cdot 2^i-4$, we have $n\equiv 6,7,8\pmod {12}$ so their homology must vanish by Proposition \ref{prop: additional conditions twist}(c). This allows us to extend this to vanishing in a $(12\cdot 2^i-4,10\cdot 2^i-4)$ staircase as desired.
\end{proof}
This lemma gives a vanishing slope of $\frac{10\cdot 2^i-3}{12\cdot 2^i-3}$, and taking $i\rightarrow \infty$, the slope of vanishing approaches $\frac 5 6$ as desired.
\item $\zeta_{15}$. Proposition \ref{prop: additional conditions twist}(c) tells us that the homology vanishes unless $n\equiv 0,1,6,10\pmod{15}$, and Figure \ref{fig: C_wedge computational results} tells us that the homology vanishes in a $(15,12)$ staircase. Applying Theorem \ref{thm: main extrapolating vanishing}, we deduce vanishing in a $(15\cdot 2 +1,12\cdot 2+1)=(31,25)$ staircase. However, the homology in the range $32\le n\le 35$ vanishes, so we extend this to the $(35,29)$ staircase. Applying Theorem \ref{thm: main extrapolating vanishing} again, we have vanishing in a $(70,58)$ staircase, and we then extend this to the $(74,62)$ staircase. This gives us vanishing of slope $\frac{21}{25}$.
\item $\zeta_{20}$. From Figure \ref{fig: C_wedge computational results}, we see that there is vanishing in a $(16,13)$ staircase, and Proposition \ref{prop: additional conditions twist}(c) extends this to a $(19,16)$ staircase, giving us a vanishing slope of $\frac{17}{20}$.
\item $\zeta_{60}$. The homology vanishes unless $n\equiv 0,1,16,21,25,36,40,45\pmod{60}$. We immediately have vanishing in a $(15,13)$ staircase, giving us a $(15\cdot 2+1,13\cdot 2+1)=(31,27)$ staircase which we extend to $(35,31)$, which further gives $(35\cdot 3+2,31\cdot 3+2)=(107,95)$ and we finally extend this to a $(119,107)$ staircase. This gives us a vanishing slope of $\frac{9}{10}$.
\item $\zeta_{105}$. The homology vanishes unless $n\equiv 0,1,15,21,36,70,85,91\pmod{105}$. By Corollary \ref{cor: density}, we have vanishing in a $(69,64)$ staircase. We claim for all $i\ge 0$ that we have vanishing in a $(105\cdot 2^i-36,99\cdot 2^i-35)$ staircase, and prove this by induction as follows. The base case is true, and if we have vanishing for $(N,I)=(105\cdot 2^{i-1}-36,99\cdot 2^{i-1}-35)$, we have vanishing for $(2N+1,2I+1)=(105\cdot 2^i-71,99\cdot 2^i-69)$. There is only one possible $n$ with nonzero homology in the range $105\cdot 2^i-70\le n\le 105\cdot 2^i-36$, so by the proof of Corollary \ref{cor: density} this results in at most one more homological degree where homology could be nonzero. Hence, we have vanishing for $(2N+1+35, 2I+1+34)=(105\cdot 2^i-36, 99\cdot 2^i-35)$ as desired. This gives us a slope of vanishing that approaches $\frac{33}{35}$ as $i\rightarrow \infty$.
\end{enumerate}
\begin{figure}
    \centering
\begin{tikzpicture}[x=1cm,y=1cm,scale=0.55]
\drawnaxis{11}
\drawiaxis{10}
\drawstaircase{1}{11}{0}{black!10}{0}
\drawstaircase{4}{11}{0}{red!40}{0}

\foreach \n/\i/\txt in {
1/0/3,
4/2/2,
4/3/2,
5/3/6,
5/4/6,
6/3/2,
6/4/10,
6/5/8,
7/5/30,
7/6/30,
8/5/5,
8/6/70,
8/7/65,
10/7/5
 }{
    \cell{\n}{\i}{\txt}
  }
\foreach \n/\i/\txt in {
9/7/126,
9/8/126,
10/8/338,
10/9/333,
11/9/810,
11/10/810
 }{
    \smallcell{\n}{\i}{\txt}
  }
\end{tikzpicture}
\begin{tikzpicture}[x=1cm,y=1cm,scale=0.75]
\drawnaxis{5}
\drawiaxis{4}
\drawstaircase{1}{5}{0}{black!10}{0}
\drawstaircase{4}{5}{0}{red!40}{0}
\foreach \n/\i/\txt in {
1/0/10,
2/0/15,
2/1/15,
3/1/50,
3/2/50,
4/1/35,
4/2/304,
4/3/260,
5/2/130
 }{
    \cell{\n}{\i}{\txt}
  }
\foreach \n/\i/\txt in {
5/3/1740,
5/4/1610
 }{
    \smallcell{\n}{\i}{\txt}
  }
\end{tikzpicture}
\begin{tikzpicture}[x=1cm,y=1cm,scale=0.7]
\drawnaxis{7}
\drawiaxis{6}
\drawstaircase{1}{7}{0}{black!10}{0}
\drawstaircase{4}{7}{0}{red!40}{0}
\foreach \n/\i/\txt in {
1/0/5,
3/1/10,
3/2/10,
4/1/2,
4/2/22,
4/3/20,
5/3/70,
5/4/70,
6/3/3,
6/4/154,
6/5/151,
7/5/950,
7/6/950
 }{
    \cell{\n}{\i}{\txt}
  }
\end{tikzpicture}
\begin{tikzpicture}[x=1cm,y=1cm,scale=0.7]
\drawnaxis{7}
\drawiaxis{6}
\drawstaircase{1}{7}{0}{black!10}{0}
\drawstaircase{4}{7}{0}{red!40}{0}
\foreach \n/\i/\txt in {
1/0/7,
3/1/28,
3/2/28,
4/1/4,
4/2/60,
4/3/56,
5/3/420,
5/4/420,
6/3/14,
6/4/964,
6/5/950
 }{
    \cell{\n}{\i}{\txt}
  }
\foreach \n/\i/\txt in {
7/5/9842,
7/6/9842
 }{
    \smallcell{\n}{\i}{\txt}
  }
\end{tikzpicture}
\begin{tikzpicture}[x=1cm,y=1cm,scale=0.7]
\drawnaxis{7}
\drawiaxis{6}
\drawstaircase{1}{7}{0}{black!10}{0}
\drawstaircase{3}{5}{0}{red!40}{0}
\drawblock{6}{7}{0}{2}{blue!40}{0}
\foreach \n/\i/\txt in {
1/0/4,
3/1/2,
3/2/2,
5/3/24,
5/4/24,
6/3/5,
6/4/78,
6/5/73,
7/5/144,
7/6/144
 }{
    \cell{\n}{\i}{\txt}
  }
\end{tikzpicture}
\begin{tikzpicture}[x=1cm,y=1cm,scale=0.7]
\drawnaxis{7}
\drawiaxis{6}
\drawstaircase{1}{7}{0}{black!10}{0}
\drawstaircase{4}{7}{0}{red!40}{0}
\foreach \n/\i/\txt in {
1/0/5,
2/0/5,
2/1/5,
3/1/5,
3/2/5,
4/1/1,
4/2/11,
4/3/10,
5/3/35,
5/4/35,
6/4/265,
6/5/265,
7/4/10,
7/5/495,
7/6/485
 }{
    \cell{\n}{\i}{\txt}
  }
\end{tikzpicture}
\caption{Homology of $(\C R)_{-1}$ for (a) $G=S_3$, $R$ transpositions, (b) $G=S_5$, $R$ transpositions, (c) $G=D_5$, $R$ reflections, (d) $G=D_7$, $R$ reflections, (e) $G=A_3$, $R$ one conjugacy class of 3-cycles and (f) $G=C_5\rtimes C_4$, $R=\{(x,1)\colon x\in C_5\}$.}\label{fig: Mobius computation figures}
\end{figure}
\subsection{Sign twist of $\C R$}\label{sec: individual neg twists} We now consider some examples of the braided vector spaces $(\C R)_{-1}$ where $R$ is a union of conjugacy classes in a group $G$, focusing on the cases in Figure \ref{fig: table mobius G,R}. Unlike the previous section which discusses twists of $\C_\wedge$, we do not have strong vanishing results coming from Proposition \ref{prop: additional conditions twist}. Indeed, as remarked earlier, since $\ord(-1)=2$, the divisibility condition $\ord(\zeta)\mid n(n-1)|G|$ in Proposition \ref{prop: additional conditions twist}(b) is trivially satisfied, so it does not give us any vanishing $n$. This is actually the truth, as we will see in the examples below. Because of this, we are only able to prove vanishing slopes by computing homology for small $n$. We present our computations in Figure \ref{fig: Mobius computation figures}. The example of $G=S_4$ and $R$ the conjugacy class of transpositions was already given in Figure \ref{fig: transpositions in S_4}. By applying Theorem \ref{thm: main extrapolating vanishing} to the appropriate vanishing staircases indicated, we get the desired slopes of vanishing in Figure \ref{fig: table mobius G,R}.
\subsection{Other braided vector spaces}\label{sec: intro individual bvs} Lastly, we consider other braided vector spaces which may not necessarily correspond to something arithmetic. These examples are usually connected to physics because braided vector spaces are solutions to the Yang-Baxter equation.

First, we look at Gaussian braided vector spaces $V$ of dimension $m$ as defined in \cite{Galindo_Rowell_2014}. Let $\zeta=\zeta_m$ if $m$ is odd and $\zeta=\zeta_{2m}$ if $m$ is even. Let $\{e_1,\ldots, e_m\}$ be a basis of $V$, and define the braiding by
$$R(e_i\otimes e_j)=\frac{1}{\sqrt m}\sum_{k=0}^{m-1}\zeta^{k^2+{k(j-i)}}e_{i+k}\otimes e_{j+k}.$$
As an example, we compute the homology for the three-dimensional Gaussian braided vector space in Figure \ref{fig: gaussian 3d}, and we from this we see a vanishing slope of $\frac 5 7$.
\begin{figure}[h]
    \centering
  \begin{tikzpicture}[x=1cm,y=1cm,scale=0.65]
\drawnaxis{8}
\drawiaxis{7}
\drawstaircase{1}{8}{0}{black!10}{0}
\drawstaircase{2}{6}{0}{red!40}{0}
\drawblock{7}{8}{0}{4}{blue!40}{0}
\foreach \n/\i/\txt in {
1/0/3,
7/5/81,
7/6/81,
8/5/81,
8/6/81
 }{
    \cell{\n}{\i}{\txt}
  }
\end{tikzpicture}
    \caption{Homology of $3$ dimensional Gaussian braided vector space.}
    \label{fig: gaussian 3d}
\end{figure}

Next, we look at two dimensional braided vector spaces, which were classified completely in \cite{Hietarinta_1992}. The classification organizes the solutions according to the number of free parameters and the rank of the braiding. We choose to look at the solutions with no free parameters, since these are in some sense the most canonical. We also require them to be of full rank as in our case we require $R_{V,V}\colon V\otimes V\xrightarrow\sim V\otimes V$ to be an isomorphism. Then, there are three solutions, with braiding matrix written as follows with respect to the basis $e_1\otimes e_1,e_1\otimes e_2,e_2\otimes e_1,e_2\otimes e_2$:
$$R_1=\begin{pmatrix}
1 & & &1\\
& &-1& \\
&-1& & \\
& & &1
\end{pmatrix},\ R_2=\frac{1}{\sqrt 2}\begin{pmatrix}
1 & & &1\\
&1&1& \\
&-1&1& \\
-1& & &1
\end{pmatrix},\ R_3=\begin{pmatrix}
1 & & &\\
& 1&& \\
& & 1& \\
& & &1
\end{pmatrix}.$$

Note that the convention of the braiding matrix here differs from that in \cite{Hietarinta_1992} by swapping the second and third columns. We also normalize $R_2$ by dividing by $\sqrt 2$ so the eigenvalues have absolute value $1$. 

Let $V_i$ be the two-dimensional braided vector space with braiding $R_i$. The case of $V_3$ is not interesting as the braiding $R_3$ is trivial and $B_n$ acts trivially on $V_3^{\otimes n}$. In this case the homology can be deduced from the classical computation of the homology of configuration space and is simply $H_i(B_n,V_3^{\otimes n})=V_3^{\otimes n}$ for $0\le i\le \min(n-1,1)$ and $0$ otherwise. 

For $V_1$, we can consider all twists $(V_1)_\zeta$ by some $\zeta\in \C^\times$, and recall that this corresponds to multiplying $R_1$ by $\zeta$. It is not hard to see that if $1$ is not an eigenvalue of $\zeta R_1$, then $H_0(B_2,(V_1)_\zeta^{\otimes 2})=H_1(B_2,(V_1)_\zeta^{\otimes 2})=0$ and so the homology vanishes with slope at least $\frac 1 3$. We now consider the case when $1$ is an eigenvalue, which forces $\zeta=1$ or $\zeta=-1$. We compute the homology in Figure \ref{fig: V_1 computation}. We see that $V_1$ itself seems to exhibit homological stability, while the twist $(V_1)_{-1}$ vanishes with slope at least $\frac 6 {11}$.
\begin{figure}[h]
    \centering
\begin{tikzpicture}[x=1cm,y=1cm,scale=0.5]
\drawnaxis{10}
\drawiaxis{9}
\drawstaircase{1}{10}{0}{black!10}{0}
\drawblock{1}{10}{0}{0}{blue!40}{2}
\drawblock{3}{10}{1}{1}{blue!40}{3}
\drawblock{4}{10}{2}{2}{blue!40}{3}
\drawblock{6}{10}{3}{3}{blue!40}{7}
\drawblock{7}{10}{4}{4}{blue!40}{13}
\drawblock{9}{10}{5}{5}{blue!40}{22}
\foreach \n/\i/\txt in {
2/1/2,
3/2/1,
4/3/2,
5/3/6,
5/4/4,
6/4/10,
6/5/5,
7/5/15,
7/6/7,
8/5/21,
8/6/24,
8/7/11,
9/6/35,
9/7/40,
9/8/19,
10/6/38,
10/7/61,
10/8/68,
10/9/31
  }{
    \cell{\n}{\i}{\txt}
  }
\end{tikzpicture}
\begin{tikzpicture}[x=1cm,y=1cm,scale=0.5]
\drawnaxis{10}
\drawiaxis{9}
\drawstaircase{1}{10}{0}{black!10}{0}
\drawstaircase{5}{10}{0}{red!40}{0}
\foreach \n/\i/\txt in {
1/0/2,
2/0/1,
2/1/1,
3/1/2,
3/2/2,
4/1/1,
4/2/3,
4/3/2,
5/2/1,
5/3/5,
5/4/4,
6/3/4,
6/4/9,
6/5/5,
7/3/1,
7/4/6,
7/5/15,
7/6/10,
8/4/1,
8/5/14,
8/6/27,
8/7/14,
9/5/4,
9/6/24,
9/7/45,
9/8/25,
10/6/8,
10/7/48,
10/8/79,
10/9/39
 }{
    \cell{\n}{\i}{\txt}
  }
\end{tikzpicture}
\caption{Homology of the two-dimensional braided vector space $V_1$ and $(V_1)_{-1}$.}\label{fig: V_1 computation}
\end{figure}

Finally, one can check that $V_2$ is actually isomorphic to the two-dimensional Gaussian braided vector space via a change of basis. We compute that the homology of $V_2$ itself vanishes for $2\le n\le 13$, and we are not sure whether the homology actually vanishes for all $n\ge 2$. Using the same reasoning as above, we see that all twists $(V_2)_\zeta$ vanish with slope at least $\frac 1 3$ unless $\zeta=\zeta_8$ or $\zeta=\bar\zeta_8$. Since these two are complex conjugates, they have the same homology, which we compute in Figure \ref{fig: V_2 computation}. We observe that there is homological vanishing of slope $\frac 1 4$ which seems to be sharp. Furthermore, all dimensions of homology are powers of $2$, and there appears to be an underlying pattern which also resembles the three-dimensional Gaussian braided vector space.
\begin{figure}[h]
    \centering
  \begin{tikzpicture}[x=1cm,y=1cm,scale=0.5]
\drawnaxis{10}
\drawiaxis{9}
\drawstaircase{1}{10}{0}{black!10}{0}
\drawstaircase{3}{3}{0}{red!40}{0}
\drawblock{4}{10}{0}{0}{blue!40}{0}
\drawblock{7}{10}{1}{1}{blue!40}{0}
\foreach \n/\i/\txt in {
1/0/2,
2/0/2,
2/1/2,
4/1/4,
4/2/4,
5/1/8,
5/2/8,
6/1/8,
6/2/8,
8/2/16,
8/3/16,
8/6/16,
8/7/16,
9/2/32,
9/3/32,
9/7/32,
9/8/32,
10/2/32,
10/3/32,
10/7/32,
10/8/32
 }{
    \cell{\n}{\i}{\txt}
  }
\end{tikzpicture}
    \caption{Homology of $(V_2)_{\zeta_8}$}
    \label{fig: V_2 computation}
\end{figure}
\subsection{Homological computations and algorithms}\label{sec: on homological computations}
The homology $H_i(B_n,V^{\otimes n})$ can be computed explicitly from the bar-complex as seen in Equation \eqref{eqn: homology to bar-complex}. Hence, in order to compute the dimensions of homology, it suffices to compute the rank of the differentials in the bar-complex. This is not difficult as one can easily write explicit matrices for the differentials from the definition of the shuffle product. Hence, most of the work comes from optimizing the computation of rank of these matrices so that one can improve the range of computation for $n$. 

Recall that if $V$ is a braided vector space of dimension $d$, the largest term in the bar-complex has dimension $D\approx \binom n {\lfloor n/2\rfloor} d^n = O((2d)^n)$, so the largest differentials have dimensions of this size. A simple Gaussian elimination algorithm would take $O(D^3)$ time and becomes infeasible quickly. 

The key improvement comes from the realization that the matrices corresponding to the differentials in the bar-complex are sparse. Let $z$ be the number of non-zero entries in each row, which is much smaller than $D$. A typical example in our case (say $V=\C R$ for $R$ the rack of transpositions in $S_3$ and $n=10$) could have $D\approx 10^{7}$ and $z\approx 100$. We use the block Wiedemann algorithm \cite{Wiedemann1986,KaltofenSaunders1991,Coppersmith1994} which is a probabilistic algorithm that computes the kernel vectors (and hence the rank) of a matrix over $\F_p$, which has complexity $O(D^2z)$, which for our applications saves a factor of $\approx 10^5$, although the larger constant overhead reduces this factor somewhat. To get a matrix over a finite field, we randomly chose a few large primes $p$ and reduce our matrix modulo $p$, checking that the rank in the same for each prime.

A smaller improvement to the algorithm is to note that the differential preserves certain invariants like the global monodromy for the case of racks from groups, allowing us to decompose the matrix for the differential into smaller blocks and running the algorithm on each of them. 

We believe that there could be a faster algorithm by exploiting the spectral sequences associated to filtrations similar to those in Section \ref{sec: proof of main theorem}. We attempted to implemented this but failed.

We prompted GPT-5.5 to implement the algorithm described above in C++. We did this in several steps, by first asking it to implement the simpler Gaussian elimination algorithm, then changing this to the block Wiedemann algorithm while keeping in mind complexity constraints, and implementing a multi-threaded version. We then asked it to optimize the code for speed as much as possible. We also did manual profiling of the code to see which parts took the most time and suggested changes to GPT-5.5. We verified correctness of the output in a variety of test cases at each step. The AI-generated code is available \href{https://drive.google.com/drive/folders/1AiI_iS3NKXp4XGrYsV0S7egW3xFw-9Pk?usp=sharing}{here}.
\section{Proof of arithmetic statements}\label{sec: arithmetic applications}
Finally, we link the homological results in the previous few sections to our arithmetic applications as stated in Section \ref{sec: intro patterson} and \ref{sec: intro character sums G-extensions}. 
\subsection{Möbius and character sums}
We first tackle the case of the Möbius function and character sums as it is slightly simpler. We will prove Theorem \ref{thm: character G extensions} and \ref{thm: multiple characters} and justify Figure \ref{fig: table mobius G,R}. We remark that this is rather similar to \cite[Section 6]{ES26} because we are in the same arithmetic situation.

\begin{proof}[Proof of Theorem \ref{thm: character G extensions}] Recall from the discussion in Section \ref{sec: arith sums from trace fns} that if we let $\mathcal L'_{G,R}=\pi'_*\bar \Q_l$ be pushforward of the constant sheaf from Hurwitz space $\Hur^{n_1,\ldots, n_k}_{G,R,\infty}$ to $\Conf^n$ as in Example \ref{eg: local system hurwitz space} and $\delta^*\mathcal L_\chi$ be the pullback of the Kummer sheaf via the discriminant map as in Example \ref{eg: local system character sheaves}, then for $f\in \Conf^n(\F_q)$ we have
$$\tr(\Frob_q, (\mathcal L'_{G,R}\otimes \delta^*\mathcal L_\chi)_{\bar f})=\#\{L\in \mathcal E^R_q(G;n_1,\ldots, n_k)\colon f_L=f\}\cdot \chi(\disc(f)).$$
Thus, by the Grothendieck-Lefschetz trace formula, we express the desired sum as
\begin{equation*}
\begin{split}
\sum_{L\in \mathcal E^R_q(G;n_1,\ldots, n_k)}\chi(\disc(f_L))&=\sum_{f\in \Conf^n(\F_q)}\tr(\Frob_q,(\mathcal L'_{G,R}\otimes \delta^*\mathcal L_\chi)_{\bar f}) \\
&=\sum_{i=0}^{2n}(-1)^i\tr(\Frob_q, H^i_c(\Conf^n_{\bar\F_q},\mathcal L'_{G,R}\otimes \delta^*\mathcal L_\chi)).
\end{split}
\end{equation*}

Now, recall from Example \ref{eg: local system hurwitz space} and \ref{eg: local system character sheaves} that the analytification of $\mathcal L'_{G,R}\otimes \delta^*\mathcal L_\chi$ over $\C$ is a direct summand of $V_\zeta ^{\otimes n}$ for the braided vector space $V=\C R$, where $\zeta$ is a root of unity satisfying $\ord(\zeta)=\ord(\chi)$. We use a comparison theorem from étale to singular cohomology \cite[Theorem 4.1.1]{ES26} to see that $H^i_c(\Conf^n_{\bar\F_q},\mathcal L_{G,R}'\otimes \delta^*\mathcal L_\chi)=0$ if we have $H_{2n-i}(B_n,(V_\zeta)^{*\otimes n})=H_{2n-i}(B_n,(V_{\bar\zeta})^{\otimes n})=0$ where we used the fact that the dual of $V$ is itself.

On the other hand, applying case (b) of Corollary \ref{cor: density for case (b), (c)}, we get that the homology $H_j(B_n,(V_{\bar\zeta})^{\otimes n})$ vanishes with slope $B=1-\frac{2^{\omega(o')}}{o'}$, i.e. there is some constant $C$ for which the homology is only nonzero when $j\ge Bn-2C$. Translating this to cohomological degree, $H^i_c(\Conf^n_{\bar\F_q},\mathcal L_{G,R}'\otimes \delta^*\mathcal L_\chi)$ is nonzero only if $i\le (2-B)n+2C$

The Weil bound \cite{Deligne_1980} tells us that the eigenvalue of $\Frob_q$ on $H^i_c(\Conf^n_{\bar\F_q},\mathcal L_{G,R}'\otimes \delta^*\mathcal L_\chi)$ is bounded above by $q^{i/2}$. Combining this with the bound for $i$ as well as the trivial Betti number bound $\sum_i \dim (H_i(B_n,V^{*\otimes n})\le 2^{n-1}(\dim V)^n$ from the bar-complex, we get the desired bound
$$\left|\sum_{L\in \mathcal E^R_q(G;n_1,\ldots, n_k)}\chi(\disc(f_L))\right|\le 2^{n-1}|R|^nq^{\left(\frac 1 2 +\frac{2^{\omega(o')-1}}{o'}\right)n+C}.$$
\end{proof}
Recall that the Möbius function is the special case when $\chi=\xi$ is the quadratic character, so the above proof tells us that a vanishing slope of $B$ for $(\C R)_{-1}$ leads to a $q$-exponent of the form $(1-\frac B 2)n+C$. Hence, the computations in Figure \ref{fig: transpositions in S_4} and \ref{fig: Mobius computation figures} justifies Figure \ref{fig: table mobius G,R}.

We now deal with the statement for multiple characters.
\begin{proof}[Proof of Theorem \ref{thm: multiple characters}] We now consider instead the pushforward $\mathcal L_{G,R}=\pi_*\bar\Q_l$ of the constant sheaf from $\Hur^{n_1,\ldots,n_k}_{G,R,\infty}$ to $\Conf^{n_1,\ldots, n_k}$ from Example \ref{eg: local system hurwitz space}, and the tensor product of pullbacks of Kummer sheaves $\mathcal L_{\vec\chi}$ in Equation \eqref{eqn: multiple kummer}. Then, for $x=(f_1,\ldots, f_k)\in \Conf^{n_1,\ldots, n_k}(\F_q)$, we have
\begin{equation*}
\begin{split}
&\tr(\Frob_q,(\mathcal L_{G,R}\otimes \mathcal L_{\vec\chi})_{\bar x})\\
=&\# \{L\in \mathcal E^R_q(G;n_1,\ldots, n_k)\colon f_{L,i}=f_i \ \forall 1\le i\le k\}\cdot \prod_{1\le i \le k} \chi_i(\disc(f_i))  \prod_{1\le i<j\le k} \chi_{ij}(\res(f_i,f_j))
\end{split}
\end{equation*}
so the Grothendieck-Lefschetz trace formula tells us that 
\begin{equation*}
\begin{split}
&\sum_{L\in \mathcal E^{R}_q(G;n_1,\ldots, n_k)} \prod_{1\le i\le k} \chi_{i} (\disc(f_{L,i}))\prod_{1\le i< j \le k}\chi_{ij}(\res(f_{L,i},f_{L,j}))\\
=& \sum_{i=0}^{2n}\tr(\Frob_q,H^i_c(\Conf^{n_1,\ldots, n_k}_{\bar\F_q},\mathcal L_{G,R}\otimes\mathcal L_{\vec\chi}))
\end{split}
\end{equation*}

Recall that the analytification of $\mathcal L_{G,R}$ as a $B_{n_1,\ldots, n_k}$-representation is a direct summand of $W_{n_1,\ldots, n_k}=(\C R_1)^{\otimes n_1}\otimes \cdots \otimes (\C R_k)^{\otimes n_k}$, and the analytification of $\mathcal L_{\vec\chi}$ is the one-dimensional representation $\C_{\vec\zeta}$ where a positive half-twist of adjacent strands of the same color $i$ acts by $\zeta_i$ and a positive full twist of two strands of different colors $i<j$ acts by $\zeta_{ij}^2$. The analogue of \cite[Theorem 4.1.1]{ES26} for multicolored configuration space $\Conf^{n_1,\ldots, n_k}$ holds as $\Conf^{n_1,\ldots, n_k}$ is a finite étale cover of $\Conf^n$ and \cite[Proposition 7.7]{EVW16} works for finite étale covers, so we have that $H^i_c(\Conf^{n_1,\ldots, n_k}_{\bar\F_q},\mathcal L_{G,R}\otimes\mathcal L_{\vec\chi})=0$ if $H_{2n-i}(B_{n_1,\ldots, n_k},W_{n_1,\ldots, n_k}\otimes \C_{\vec{\bar\zeta}})=0$, where we use the fact that the dual of $W_{n_1,\ldots, n_k}$ is itself.

We want to apply Shapiro's lemma, so we need to find the induced $B_n$-representation for $W_{n_1,\ldots, n_k}\otimes \C_{\vec{\bar\zeta}}$. The untwisted induced module $\Ind^{B_n}_{B_{n_1,\ldots, n_k}}W_{n_1,\ldots, n_k}$ is a direct summand of $V^{\otimes n}$ where $V=\C R$ corresponding to the tensor products which contain the factors $V_i=\C R_i$ exactly $n_i$ times. It is not hard to check that the twisted induced module $\Ind^{B_n}_{B_{n_1,\ldots, n_k}}W_{n_1,\ldots, n_k}\otimes \C_{\vec{\bar\zeta}}$ is exactly the direct summand of $V_{\vec{\bar\zeta}}^{\otimes n}$ defined in Section \ref{sec: multicolored case} in the braided monoidal abelian category $\mathcal V'$ twisted by $\vec{\bar\zeta}$ which contain the factors $V_i'$ exactly $n_i$ times. Here, we note that the action of the positive full twist by $\zeta_{ij}^2$ is spread over two half-twists $R_{V_i',V_j'}=\zeta_{ij}R_{V_i,V_j}$ and $R_{V_j',V_i'}=\zeta_{ij}R_{V_j,V_i}$. Thus, by Shapiro's lemma, we have that $H_{2n-i}(B_{n_1,\ldots, n_k},W_{n_1,\ldots, n_k}\otimes \C_{\bar\zeta})=0$ if $H_{2n-i}(B_n,V_{\vec{\bar\zeta}}^{\otimes n})=0$.

For any integer $N\ge 2$, Corollary \ref{cor: manin mumford} tells us that if $\vec{\zeta}$ (and hence $\vec{\bar\zeta}$) is not in a finite union of torsion cosets, then $V_{\vec \bar\zeta}$ is in $\mathcal F_V\setminus \mathcal F_{V,N}$ and vanishes in a $(N,N-2)$ staircase. Applying Theorem \ref{thm: main extrapolating vanishing}, the homology $H_j(B_n,(V_{\vec{\bar\zeta}})^{\otimes n})$ vanishes with slope $\frac{N-1}{N+1}$. More precisely, we keep track of the constant term by Equation \eqref{eqn: i in terms of n}, and we see that the largest $i=2n-j$ where cohomology $H^i_c(\Conf^{n_1,\ldots,n_k}_{\bar\F_q},\mathcal L_{G,R}\otimes\mathcal L_{\vec\chi})$ could be nonzero is 
\begin{equation}\label{eqn: i bound by n proof}
i=2n-(N-1)\left \lfloor \frac{n+1}{N+1}\right\rfloor + \max \left(\left\{\frac{n+1}{N+1}\right\}-2 ,0\right)\le \left(1+\frac{2}{N+1}\right)(n-1)+2 
\end{equation}
Hence, by applying the Weil bound and the trivial Betti number bound to the trace formula above, we have
$$\left|\sum_{L\in \mathcal E^{R}_q(G;n_1,\ldots, n_k)} \prod_{1\le i\le k} \chi_{i} (\disc(f_{L,i}))\prod_{1\le i< j \le k}\chi_{ij}(\res(f_{L,i},f_{L,j}))\right|\le 2^{n-1}|R|^nq^{\left(\frac 1 2+\frac 1 {N+1}\right)(n-1)+1}$$
which finishes the proof.
\end{proof}
\subsection{Bias in Gauss sums} We now prove Theorem \ref{thm: gauss sums prime powers} and \ref{thm: gauss sums general order} as well as justify Figure \ref{fig: table order}.
\begin{proof}[Proof of Theorem \ref{thm: gauss sums prime powers}] Recall from Equations \eqref{eqn: formula for 1_irr}, \eqref{eqn: formula for G_chi(f)} and Example \ref{eg: local system character sheaves} that for $f\in \Conf^n(\F_q)$,
$$\mathbf 1_{\text{irr}} (f) G_\chi(f)= C_{q,n,\chi} \cdot \frac 1 n \sum_{k=0}^{n-1} (-1)^k \tr(\Frob_q,(\mathcal L_{\wedge^k \text{std}_n}\otimes \delta^*\mathcal L_{\chi\cdot\xi})_{\bar f})$$
where $C_{q,n,\chi}$ is a constant depending on $q,n,\chi$ with absolute value $q^{n/2}$, so again by the Grothendieck-Leftschetz trace formula we have
$$\sum_{\substack{\deg(\pi)=n\\ \pi \text{ prime}}} \frac{G_\chi(\pi)}{q^{n/2}} = \frac{C_{q,n,\chi}}{q^{n/2}}\cdot\frac 1 n \sum_{k=0}^{n-1}\sum_{i=0}^{2n}(-1)^{i+k} \tr(\Frob_q,H^i_c(\Conf^n_{\bar\F_q},\mathcal L_{\wedge^k \text{std}_n}\otimes \delta^*\mathcal L_{\chi\cdot\xi}))$$
Now, by Example \ref{eg: local system rep S_n}, \ref{eg: local system character sheaves} and Equation \eqref{eqn: C_wedge decomp}, the analytification of the sheaves $\mathcal L_{\wedge^k \text{std}_n}\otimes \delta^*\mathcal L_{\chi\cdot\xi}$ over $\C$ is a direct summand of $V_{-\zeta}^{\otimes n}$ for $V=\C_\wedge$ where again $\zeta$ is a root of unity with $\ord(\zeta)=\ord(\chi)$. The comparison theorem \cite[Theorem 4.1.1]{ES26} then tells us that $H^i_c(\Conf^n_{\bar\F_q},\mathcal L_{\wedge^k \text{std}_n}\otimes \delta^*\mathcal L_{\chi\cdot\xi})=0$ if $H_{2n-i}(B_n,V_{-\zeta}^{\otimes n})=0$.

We assumed that the order $o=\ord(\chi)$ is a prime power or twice of an odd prime power. If $o=2$, there is nothing to prove as the statement follows even without homological vanishing. If $o=2^k$ for $k\ge 2$, then we also have $\ord(-\zeta)=\ord(\zeta)=2^k$. It is clear that $2^k$ does not divide $n(n-1)$ for $2\le n\le 2^k-1$. If $o=2p^k$ or $p^k$ for an odd prime $p$ with $k\ge 1$, then we have $\ord(-\zeta)=p^k$ or $2p^k$ respectively. Either way, we have that $p^k$ and $2p^k$ do not divide $n(n-1)$ for $2\le n \le p^k-1$. 

Hence, in all cases, applying Proposition \ref{prop: additional conditions twist}(c), we see that $H_i(B_n,V_{-\zeta}^{\otimes n})$ vanishes for all $i$ when $2\le n\le p^k-1$, which means it vanishes in an $(N,N-2)$ staircase when $N=p^k-1$. Then, as in Equation \eqref{eqn: i bound by n proof}, we get that the cohomology vanishes when $i\le (1+\frac{2}{p^k})(n-1)+1$. 

By Equation \eqref{eqn: C_wedge decomp}, we have that the sum of all the Betti numbers in the formula is at most half of the sum of the Betti numbers for the cohomology of $V_{-\zeta}^{\otimes n}$, so it is bounded above by $\frac 1 2 2^{n-1}\dim(V)=2^{2n-2}$. Combining the Weil bound with the Betti number bound gives
$$\left|\sum_{\substack{\deg(\pi)=n\\ \pi \text{ prime}}} \frac{G_\chi(\pi)}{q^{n/2}} \right|\le \frac{2^{2n-2}}{n}q^{(\frac 1 2 +\frac 1 M)(n-1)+1}$$
as desired.
\end{proof}
\begin{proof}[Proof of Theorem \ref{thm: gauss sums general order}]
We follow the previous proof almost exactly, except we now use Corollary \ref{cor: density for case (b), (c)}(c) to conclude that $H_i(B_n,V_{-\zeta}^{\otimes n})$ vanishes with slope $1-\frac{2^{\omega(\ord(-\zeta))}}{\ord(-\zeta)}=1-\frac{2^{\omega (o)}}{o}$, which gives the desired bound.
\end{proof}
Similarly, from the same argument above, one can prove the $q$-exponents in Figure \ref{fig: table order} using the computations in Section \ref{sec: individual C_wedge}.
\newpage
\printbibliography

@article{Kummer_1846, title={De residuis cubicis disquisitiones nonnullae analyticae.}, volume={1846}, DOI={10.1515/crll.1846.32.341}, number={32}, journal={Journal für die reine und angewandte Mathematik (Crelles Journal)}, author={Kummer, E.E.}, year={1846}, month={Jul}, pages={341–359}}

@article{Chinta_Friedberg_Hoffstein_2011, title={Double Dirichlet series and theta functions}, DOI={10.1007/978-1-4614-1219-9_6}, journal={Springer Proceedings in Mathematics}, author={Chinta, Gautam and Friedberg, Solomon and Hoffstein, Jeffrey}, year={2011}, month={Oct}, pages={149–170}}

@article{Church_Farb_2013, title={Representation theory and homological stability}, volume={245}, DOI={10.1016/j.aim.2013.06.016}, journal={Advances in Mathematics}, author={Church, Thomas and Farb, Benson}, year={2013}, month={Oct}, pages={250–314}}

@article{Gauss_1801, DOI={10.5479/sil.324926.39088000932822}, journal={Disquisitiones Arithmeticae}, author={Gauss, Carl Friedrich}, year={1801}}

@book{Davenport_2009, place={New York}, title={Multiplicative number theory}, publisher={Springer}, author={Davenport, Harold}, editor={Montgomery, Hugh L.Editor}, year={2009}}

@article{Heath-Brown_Patterson_1979, title={The distribution of Kummer sums at prime arguments.}, volume={1979}, DOI={10.1515/crll.1979.310.111}, number={310}, journal={Journal für die reine und angewandte Mathematik (Crelles Journal)}, author={Heath-Brown, D.R. and Patterson, S.J.}, year={1979}, month={Sep}, pages={111–130}}

@article{Patterson_1978, title={On the distribution of Kummer sums.}, volume={1978}, DOI={10.1515/crll.1978.303-304.126}, number={303–304}, journal={Journal für die reine und angewandte Mathematik (Crelles Journal)}, author={Patterson, S.J.}, year={1978}, month={Nov}, pages={126–143}}

@article{Heath-Brown_2000, title={Kummer’s conjecture for Cubic Gauss sums}, volume={120}, DOI={10.1007/s11856-000-1273-y}, number={1}, journal={Israel Journal of Mathematics}, author={Heath-Brown, D. R.}, year={2000}, month={Dec}, pages={97–124}}

@article{Dunn_Radziwill_2024, title={Bias in cubic Gauss sums: Patterson’s conjecture}, volume={200}, DOI={10.4007/annals.2024.200.3.3}, number={3}, journal={Annals of Mathematics}, author={Dunn, Alexander and Radziwiłł, Maksym}, year={2024}, month={Nov}}

@misc{david2026quarticgausssumsprimes,
      title={Quartic Gauss sums over primes and metaplectic theta functions}, 
      author={Chantal David and Alexander Dunn and Alia Hamieh and Hua Lin},
      year={2026},
      eprint={2306.11875},
      archivePrefix={arXiv},
      primaryClass={math.NT},
      url={https://arxiv.org/abs/2306.11875}, 
}

@article{Sawin_2024, title={General multiple Dirichlet series from perverse sheaves}, volume={262}, DOI={10.1016/j.jnt.2024.03.020}, journal={Journal of Number Theory}, author={Sawin, Will}, year={2024}, month={Sep}, pages={408–453}}

@misc{ES26,
      title={Averages of Arithmetic Functions over Conductors of Function Fields}, 
      author={Jordan Ellenberg and Mark Shusterman},
      year={2026},
      eprint={2601.01242},
      archivePrefix={arXiv},
      primaryClass={math.NT},
      url={https://arxiv.org/abs/2601.01242}, 
}

@article{EVW16, title={Homological stability for Hurwitz spaces and the Cohen-Lenstra conjecture over function fields}, volume={183}, DOI={10.4007/annals.2016.183.3.1}, number={3}, journal={Annals of Mathematics}, author={Ellenberg, Jordan and Venkatesh, Akshay and Westerland, Craig}, year={2016}, month={May}, pages={729–786}}

@misc{ETW17,
      title={Fox-Neuwirth-Fuks cells, quantum shuffle algebras, and Malle's conjecture for function fields}, 
      author={Jordan S. Ellenberg and TriThang Tran and Craig Westerland},
      year={2017},
      eprint={1701.04541},
      archivePrefix={arXiv},
      primaryClass={math.NT},
      url={https://arxiv.org/abs/1701.04541}, 
}

@misc{ellenberglandesman,
      title={Homological stability for generalized Hurwitz spaces and Selmer groups in quadratic twist families over function fields}, 
      author={Jordan S. Ellenberg and Aaron Landesman},
      year={2023},
      eprint={2310.16286},
      archivePrefix={arXiv},
      primaryClass={math.NT},
      url={https://arxiv.org/abs/2310.16286}, 
}

@misc{bergstrom2023hyperellipticcurvesscanningmap,
      title={Hyperelliptic curves, the scanning map, and moments of families of quadratic L-functions}, 
      author={Jonas Bergström and Adrian Diaconu and Dan Petersen and Craig Westerland},
      year={2023},
      eprint={2302.07664},
      archivePrefix={arXiv},
      primaryClass={math.NT},
      url={https://arxiv.org/abs/2302.07664}, 
}

@misc{landesman2025cohenlenstramomentsfunctionfields,
      title={The Cohen--Lenstra moments over function fields via the stable homology of non-splitting Hurwitz spaces}, 
      author={Aaron Landesman and Ishan Levy},
      year={2025},
      eprint={2410.22210},
      archivePrefix={arXiv},
      primaryClass={math.NT},
      url={https://arxiv.org/abs/2410.22210}, 
}

@misc{landesman2025homologicalstabilityhurwitzspaces,
      title={Homological stability for Hurwitz spaces and applications}, 
      author={Aaron Landesman and Ishan Levy},
      year={2025},
      eprint={2503.03861},
      archivePrefix={arXiv},
      primaryClass={math.AT},
      url={https://arxiv.org/abs/2503.03861}, 
}

@misc{landesman2025stablehomologyhurwitzmodules,
      title={The stable homology of Hurwitz modules and applications}, 
      author={Aaron Landesman and Ishan Levy},
      year={2025},
      eprint={2510.02068},
      archivePrefix={arXiv},
      primaryClass={math.NT},
      url={https://arxiv.org/abs/2510.02068}, 
}

@misc{ma2026weightfiltrationhurwitzspaces,
      title={Weight filtration of Hurwitz spaces and quantum shuffle algebras}, 
      author={Zhao Yu Ma},
      year={2026},
      eprint={2601.03871},
      archivePrefix={arXiv},
      primaryClass={math.NT},
      url={https://arxiv.org/abs/2601.03871}, 
}

@article{Sawin_2021_sqroot, title={Square-root cancellation for sums of factorization functions over short intervals in function fields}, volume={170}, DOI={10.1215/00127094-2020-0060}, number={5}, journal={Duke Mathematical Journal}, author={Sawin, Will}, year={2021}, month={Apr}}

@article{Sawin_2024_sqroot, title={Square-root cancellation for sums of factorization functions over squarefree progressions in $\mathbb{F}_q[t]$}, volume={233}, DOI={10.4310/acta.2024.v233.n2.a3}, number={2}, journal={Acta Mathematica}, author={Sawin, Will}, year={2024}, pages={285–418}}

@article{Zagier_1992, title={Realizability of a model in infinite statistics}, volume={147}, DOI={10.1007/bf02099535}, number={1}, journal={Communications in Mathematical Physics}, author={Zagier, Don}, year={1992}, month={Jun}, pages={199–210}}

@article{KS20, title={Shuffle algebras and perverse sheaves}, volume={16}, DOI={10.4310/pamq.2020.v16.n3.a9}, number={3}, journal={Pure and Applied Mathematics Quarterly}, author={Kapranov, Mikhail and Schechtman, Vadim}, year={2020}, pages={573–657}}

@article{Hanlon_Stanley_1998, title={A $q$-deformation of a trivial symmetric group action}, volume={350}, DOI={10.1090/s0002-9947-98-01880-7}, number={11}, journal={Transactions of the American Mathematical Society}, author={Hanlon, Phil and Stanley, Richard P.}, year={1998}, pages={4445–4459}}

@article{Deligne_1980, title={La conjecture de Weil: II}, volume={52}, DOI={10.1007/bf02684780}, journal={Publications Mathématiques de l’IHÉS}, author={Deligne, Pierre}, year={1980}, month={Dec}, pages={137–252}}

@article{Andruskiewitsch_Grana_2003, title={From racks to pointed Hopf algebras}, volume={178}, DOI={10.1016/s0001-8708(02)00071-3}, number={2}, journal={Advances in Mathematics}, author={Andruskiewitsch, Nicolás and Graña, Matı́as}, year={2003}, month={Sep}, pages={177–243}}

@article{Galindo_Rowell_2014, title={Braid representations from unitary braided vector spaces}, volume={55}, DOI={10.1063/1.4880196}, number={6}, journal={Journal of Mathematical Physics}, author={Galindo, César and Rowell, Eric C.}, year={2014}, month={Jun}}

@article{Hietarinta_1992, title={All solutions to the constant quantum Yang-Baxter equation in two dimensions}, volume={165}, DOI={10.1016/0375-9601(92)90044-m}, number={3}, journal={Physics Letters A}, author={Hietarinta, Jarmo}, year={1992}, month={May}, pages={245–251}}

@book{HS20, place={Providence, RI}, title={Hopf algebras and Root Systems}, publisher={American Mathematical Society}, author={Heckenberger, István and Schneider, Hans-Jürgen}, year={2020}}

@article{Garside_1969, title={The Braid Group and other groups}, volume={20}, DOI={10.1093/qmath/20.1.235}, number={1}, journal={The Quarterly Journal of Mathematics}, author={Garside, F. A.}, year={1969}, pages={235–254}}

@book{Brown_1982, title={Cohomology of groups}, publisher={Springer}, author={Brown, Kenneth S.}, year={1982}}

@book{Bombieri_Gubler_2009, place={Cambridge}, title={Heights in Diophantine geometry}, publisher={Cambridge University Press}, author={Bombieri, Enrico and Gubler, Walter}, year={2009}}

@article{Liu_Wood_Zureick-Brown_2024, title={A predicted distribution for Galois groups of maximal unramified extensions}, volume={237}, DOI={10.1007/s00222-024-01257-1}, number={1}, journal={Inventiones mathematicae}, author={Liu, Yuan and Wood, Melanie Matchett and Zureick-Brown, David}, year={2024}, month={Apr}, pages={49–116}}

@article{Wiedemann1986,
  author  = {Wiedemann, Douglas H.},
  title   = {Solving Sparse Linear Equations Over Finite Fields},
  journal = {IEEE Transactions on Information Theory},
  volume  = {32},
  number  = {1},
  pages   = {54--62},
  year    = {1986},
  doi     = {10.1109/TIT.1986.1057137}
}

@article{Coppersmith1994,
  author  = {Coppersmith, Don},
  title   = {Solving Homogeneous Linear Equations over {GF}(2) via Block Wiedemann Algorithm},
  journal = {Mathematics of Computation},
  volume  = {62},
  number  = {205},
  pages   = {333--350},
  year    = {1994},
  doi     = {10.2307/2153413}
}

@inproceedings{KaltofenSaunders1991,
  author    = {Kaltofen, Erich and Saunders, Barry D.},
  title     = {On Wiedemann's Method of Solving Sparse Linear Systems},
  booktitle = {Applied Algebra, Algebraic Algorithms and Error-Correcting Codes (AAECC-9)},
  series    = {Lecture Notes in Computer Science},
  volume    = {539},
  pages     = {29--38},
  publisher = {Springer},
  year      = {1991}
}

@article{Miller_Tosteson_2021, title={Representation stability for pure braid group Milnor fibers}, volume={374}, DOI={10.1090/tran/8466}, number={11}, journal={Transactions of the American Mathematical Society}, author={Miller, Jeremy and Tosteson, Philip}, year={2021}, month={Jul}, pages={8177–8199}}
\end{document}